\setlist{nolistsep}
\theoremstyle{plain}
\newtheorem{thm}{Theorem}[section]
\newtheorem{lem}{Lemma}[section]
\newtheorem{remark}{\textbf{Remark}}[section]
\newtheorem{algorithm}{\textbf{Scheme}}
\newcommand{\eps}{\epsilon}
\newcommand{\bm}{\boldsymbol}
\newcommand{\bu}{{\mathbf u}}
\newcommand{\bv}{{\mathbf v}}
\newcommand{\Grad}[1]{\nabla #1}
\newcommand{\be}{\begin{equation}}
\newcommand{\ee}{\end{equation}}
\newcommand{\bse}{\begin{subequations}}
\newcommand{\ese}{\end{subequations}}
\def\benl{\begin{eqnarray*}}
\def\eenl{\end{eqnarray*}}
\def\bu{\bm{u}}
\def\be{\bm{e}}
\def\bx{\bm{x}}
\def\bX{\bm{X}}
\def\by{\bm{y}}
\def\bmu1{\bm{\mu_1}}
\newcommand{\ben}{\begin{eqnarray}}
\newcommand{\een}{\end{eqnarray}}
\newcommand{\beq}{\begin{equation}}
\newcommand{\eeq}{\end{equation}}
\newcommand{\bea}{\begin{array}}
\newcommand{\eea}{\end{array}}
\newcommand{\bef}{\begin{figure}[H]}
\newcommand{\eef}{\end{figure}}
\begin{document}
\bibliographystyle{plain}
\graphicspath{ {Figures/} }

\title[A new interface capturing method for Allen-Cahn type equations]{A new  interface capturing method for Allen-Cahn type equations based on a flow dynamic  approach in Lagrangian coordinates, I. One-dimensional case}

\author[Q. Cheng,  Chun liu and J. Shen]{Qing Cheng $^{\dagger}$, Chun Liu and  Jie Shen $^{\S}$}

\date{}
\keywords{diffuse interface; Allen-Cahn;   flow dynamic approach; Lagrangian coordinate; moving mesh}

\maketitle

\begin{abstract}
We develop a new Lagrangian approach ---  flow dynamic  approach  to effectively capture the interface in the Allen-Cahn type equations. The underlying principle of this approach   is the Energetic Variational Approach (EnVarA), motivated by Rayleigh and Onsager \cite{onsager1931reciprocal,onsager1931reciprocal2}.  Its main advantage,    comparing with numerical methods in Eulerian coordinates, is that thin interfaces can be effectively captured with few points in the Lagrangian coordinate.  We concentrate in the one-dimensional case  and construct  numerical schemes for the trajectory equation in Lagrangian coordinate that  obey the variational structures, and as a consequence, are  energy dissipative. Ample numerical results are provided to
show that only a fewer points  are enough to resolve very thin interfaces by using our Lagrangian approach.
\end{abstract}

\section{Introduction}
Diffuse interface methods have been widely used in many applications in science and engineering, especially in describing phase transitions  \cite{allen1979microscopic}, microstructure coarsening \cite{li2003thin}, porous medium \cite{vazquez2007porous}, liquid crystals \cite{leslie1979theory} or vesicle membrane \cite{DuLiWa04,DuLiWa06}.
In this paper we will explore the  Allen-Cahn model  which is  related to
the  studies of the dynamic behavior of sharp interface. The standard Allen-Cahn model  in an isothermal closed system, following  the First and Second Laws of Thermodynamics,  yields an energy dissipative law \cite{de2013non,greven2014entropy,giga2017variational}:
\begin{equation}\label{normal:law}
\frac{d}{dt}\mathcal{E}(f)=-\bf \Delta,
\end{equation}
Where  $\mathcal{E}(f)$ is the total free energy and $\bf \Delta$ is  attributed to entropy production of measuring energy dissipative rate.  The Allen-Cahn model, with $\mathcal{E}(f)=\int_{\Omega}\frac 12|\Grad f|^2+\frac{1}{4\eps^2}(f^2-1)^2d\bx$, can also be viewed as the $L^2$ gradient flow of the Ginzburg-Landau functional $\mathcal{E}(f)$, i.e.,
  its equation can be derived by taking variational derivative of the free energy
with respect to the order parameter in $L^2$-topology
\begin{eqnarray}\label{allen:0}
\begin{aligned}
f_t = -\frac{\delta \mathcal{E}(f)}{\delta f}.
\end{aligned}
\end{eqnarray}
 In this formulation, the solution will be able to capture the 
free interface motion by mean curvature \cite{chen2011mass,evans1992phase,bronsard1991motion,katsoulakis1995generalized,ilmanen1993convergence}. 
It is  well-known that  solutions of Allen-Cahn equation will develop interfaces with thickness  $O(\eps)$, which renders its numerical simulation difficult as resolving  thin interfaces will require expensive computational efforts.     How to effectively capture thin interfacial layers  has been an active research topic. 
 
Many efforts have been devoted to design efficient numerical schemes to capture the  interface of transient phenomena by using the
energy dissipative law \eqref{normal:law} and the underlying variational structure, such as spectral method \cite{liu2003phase,shen2010numerical},  moving mesh method \cite{huang1994moving,mackenzie2002moving,cao2002moving,feng2006spectral,shen2009efficient,li2001moving},  
adaptive time stepping method and adaptive spatial finite element methods which have been considered in \cite{zhang2009numerical,feng2004analysis}.  We refer to \cite{Du.F19} for a up-to-date review on this subject.

 Traditional methods for interface capturing are mainly developed in Eulerian coordinate based on various moving mesh strategies.
The objective of this paper is to develop a  new Lagrangian approach for  interface capturing by using the Energetic Variational Approach \cite{eisenberg2010energy,xu2014energetic}, since the energy dissipative law with kinematic relations of variables employed in the system describes all the physical and
mechanical phenomenon for  mathematical models. To be specific,  for the Allen-Cahn model \eqref{allen:0}, we introduce a  transport equation which connects the Eulerian coordinate and Lagrangian coordinate
under a suitably defined flow map, and  derive the  trajectory equation  for Allen-Cahn model  following the Least Action Principle and Maximum Dissipative Principle by using the flow map. The main feature of this approach is that the solution of the trajectory equation will be {\em free of thin interfaces} if the flow map is suitably defined, so that it can be solved with a resolution which is {\em independent} of $\epsilon$, the interfacial thickness in the Eulerian coordinates. The is due to the fact that  we target the mesh velocity 
 by using the trajectory equation   which is consistent with the original Allen-Cahn equation, rather than  adding  moving mesh PDEs used in Eulerian approaches \cite{huang1994moving,cao2002moving,li2001moving}.

Unlike the Allen-Cahn equation which takes a simple form in the Eulerian coordinates, the trajectory equation is a non-standard, highly nonlinear parabolic equation, which also possesses an energy
dissipative law. 
  We develop efficient numerical schemes for the trajectory equation    which  preserve the variational structure and satisfy the energy
dissipative law. Furthermore, they can be interpreted as the Euler-Lagrange equations of convex functionals so that they can be effectively solved by using a Newton type iteration. 
Our Lagrangian approach has a distinct advantage for interface problems.  Meshes, in the Eulerian coordinate through the flow map,  will automatically move to the region of thin interfaces without using any adaptive mesh movement strategy, and consequently thin interfaces can be well resolved with only a few points. In fact, as the interfacial width $\epsilon$ decreases,   our numerical results show that lesser points are needed to resolve the  interfaces with our Lagrangian approach.  

The reminder of this paper is structured as follows. In Section $2$ we introduce the flow dynamic  approach for Allen-Cahn type equations. 
In Section $3$ we develop semi-discrete and fully discrete numerical schemes for trajectory equations   in Lagrangian coordinates.  In Section $4$, we consider the two-dimensional   axi-symmetric case.  In Section $5$ we present numerical results to demonstrate the efficiency of our new approach.  Some concluding remarks are given in Section $6$, followed by an appendix on the energetic variational interpretation of our approach.

\section{Flow dynamic  approach}
In this section, we introduce the flow dynamic approach to  capture the diffusive  interface  in the Allen-Cahn equation. 

Let  $\Omega_{\bx} \in R^d \, (d=1,2,3)$ be an open bounded domain.  To fix  the idea,  we consider the following Allen-Cahn equation with Dirichlet boundary condition in $\Omega_{\bx}$:
\begin{eqnarray}
&&f_t-\Delta f + F'(f)=0;\quad f(\bx,t)|_{\partial\Omega}=0; \label{G_AC_Var:1}\\
&&f(\bx,0)=f_0(\bx), \label{G_AC_Var:2}
\end{eqnarray}
where $F(f)$ is a nonlinear potential, a typical example is the double well potential $F(f)=\frac{1}{4\eps^2}(f^2-1)^2$.

It is easy to see that  the system \eqref{G_AC_Var:1}-\eqref{G_AC_Var:2}   satisfies the following energy dissipative law
\begin{eqnarray}\label{en:diss}
\begin{aligned}
\frac{d}{dt}\int_{\Omega_x}\frac 12|\Grad f|^2&+ F(f) d\bx
=-\int_{\Omega_x}|f_t|^2d\bx.
\end{aligned}
\end{eqnarray}

\subsection{\bf Flow map and deformation tensor}
Given an initial position or a reference configuration $\bX$, and a velocity field $\bu$, we define a flow map $\bx(\bX,t)$ by
\begin{equation}
\begin{aligned}\label{flow_map_AC}
&\frac{d\bx(\bX,t)}{dt}=\bu(\bx(\bX,t),t),\\
&\bx(\bX,0)=\bX.
\end{aligned}
\end{equation}

\begin{figure}[htbp]
\centering
\includegraphics[width=0.45\textwidth,clip==]{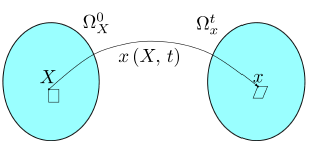}
\caption{A schematic illustration of a flow map $\bx(\bX,t)$ at a fixed time $t$: $\bx(\bX,t)$ maps $\Omega_{\bX}^0$ to $\Omega_{\bx}^t$. $\bX$ is the lagrangian coordinate while $\bx$ is  the Eulerian coordinate, and  $F(\bX,t)=\frac{\partial \bx(\bX,t)}{\partial \bX}$ represents the deformation associated with the flow map. }\label{flow_map}
\end{figure}
The flow map defined by \eqref{flow_map_AC} describes a particle moving from a initial
configuration $\bX$  to a instantaneous configuration $\bx(\bX,t)$ with velocity $\bu$, i.e., 
 $\bx(\bX,t) $ is the Eulerian coordinate and $\bX$ represents Lagrangian coordinate, with  the deformation tensor or Jacobian $F=\frac{\partial \bx(\bX,t)}{\partial \bX}$ \cite{gurtin2010mechanics}.

\begin{remark}Let $f$ be the solution of the Allen-Cahn equation \eqref{G_AC_Var:1}-\eqref{G_AC_Var:2}. We assume that $\bu$ is the velocity such that 
\begin{equation}\label{transport}
f_t+(\bu\cdot\Grad_{\bx})f=0.
\end{equation}
Then, the above transport equation and 
the flow map  defined in \eqref{flow_map_AC} determine the following  kinematic relationship between Eulerian coordinate and Lagrangian coordinate:
\begin{eqnarray}
&&\frac{d}{dt}f(\bx(\bX,t),t)=f_t+(\bu\cdot\Grad_{\bx})f=0, \label{map:1}\\
\end{eqnarray}
which leads to 
\begin{equation}\label{essential}
\hat f_0(\bX)= f(\bx(\bX,t),t)=f(\bx,0)=f_0(\bx) \quad\forall t, 
 \end{equation}
 where $\hat f_0(\bX)$ is the  initial condition in the Lagrangian coordinate. Since $\bx(\bX,0)=\bX$, we have $\hat f_0(\cdot)=f_0(\cdot)$.
 
 Once we have the flow map $\bx(\bX,t)$, we set $\phi(\bX;t)=\bx(\bX,t)$ for each $t$. Then, we derive from \eqref{map:1} that  the solution of \eqref{G_AC_Var:1}-\eqref{G_AC_Var:2} is given by
\begin{eqnarray}\label{comp}
f(\bx,t)= f_0(\phi^{-1}(\bx;t)). 
\end{eqnarray}
\end{remark}

Assuming again the transport equation $f_t+\bu\cdot\Grad f=0$ is satisfied,  we can rewrite the Allen-Cahn equation \eqref{G_AC_Var:1} as
\begin{equation}\label{All:High}
\begin{split}
\bu\cdot\Grad_{\bx} f=-\Delta_{\bx} f + F'(f).
\end{split}
\end{equation}
Just as the Allen-Cahn system \eqref{G_AC_Var:1}-\eqref{G_AC_Var:2}, we have the new energy dissipative law for \eqref{All:High}
\begin{equation}\label{en:diss:flow1}
\begin{aligned}
&\frac{d}{dt}\int_{\Omega_{\bx}}\frac 12|\Grad_{\bx} f|^2+F(f) d\bx
=-\int_{\Omega_{\bx}}|\bu\cdot \Grad_{\bx} f|^2d\bx,
\end{aligned}
\end{equation}
which is obtained by taking the inner product  of the  \eqref{All:High} with $f_t$ and  using the transport equation \eqref{transport}.

The equation \eqref{All:High} can also be interpreted as a force balance relation which can be derived from the energetic variational approach. For the reader's convenience, we provide the detail in the Appendix.

\subsection{\bf Lagrangian Formulation}
Since the formulation of Allen-Cahn equation for multi-dimensions in Lagrangian coordinate are more complicated, we shall consider first the one dimension case.


 Thanks to \eqref{essential}, we have  the  1-D chain rule $\partial_x f=f'_0(X)(\frac{\partial x}{\partial X})^{-1}$. Then, setting $\bx=\bx(\bX,t)$ in \eqref{All:High}, we can rewrite the  equation \eqref{All:High} in Lagrangian coordinate in 1-D  as 
\begin{eqnarray}\label{AC_V-Lag}
&& x_t(X,t)f'_0(X)(\frac{\partial x}{\partial X})^{-1}=
-\partial_X\left(f'_0(X)(\frac{\partial x}{\partial X})^{-1}\right)(\frac{\partial x}{\partial X})^{-1} + F'(f_0(X)),\label{AC_V-Lag:1}\\
&&x|_{\partial \Omega}=X|_{\partial \Omega}, ~~ \quad x(X,0)=X,~~X\in \Omega_X.\label{AC_V-Lag:2}
\end{eqnarray}

We observe  that the last term in \eqref{AC_V-Lag:1}  is just a forcing term for the nonlinear parabolic equation  in the Lagrangian coordinate. Hence, its  solution $x(X,t)$ should not involve thin interfacial layers as does the solution of \eqref{G_AC_Var:1}-\eqref{G_AC_Var:2} in the Eulerian coordinate.

\begin{thm}
The system \eqref{AC_V-Lag:1}-\eqref{AC_V-Lag:2} satisfies the following energy dissipative law
\begin{equation}\label{ener:diss:2:Lag_V}
\begin{split}
&\frac{d}{dt}\int_{\Omega_X}\frac{\partial x}{\partial X}\{\frac 12|f'_0(X)(\frac{\partial x}{\partial X})^{-1}|^2+F(f_0(X))\}dX
\\&=-\int_{\Omega_X}\frac{\partial x}{\partial X}|x_t(X,t)f'_0(X)(\frac{\partial x}{\partial X})^{-1}|^2dX.
\end{split}
\end{equation}
\end{thm}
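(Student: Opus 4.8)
The plan is to mirror the Eulerian derivation of \eqref{en:diss:flow1}, where one multiplies \eqref{All:High} by $f_t$ and integrates over $\Omega_{\bx}$. Under the flow map $\bx=\bx(\bX,t)$ the Eulerian volume element transforms as $d\bx=\frac{\partial x}{\partial X}\,dX$, so the natural Lagrangian counterpart is to test the trajectory equation \eqref{AC_V-Lag:1} against $\frac{\partial x}{\partial X}\,x_t(X,t)\,f_0'(X)\big(\frac{\partial x}{\partial X}\big)^{-1}$ and integrate in $X$. The structural fact that makes everything go through is that, by \eqref{essential}, the Lagrangian data $f_0(X)$ and $f_0'(X)$ are \emph{independent of} $t$: all of the time dependence of the energy resides in the Jacobian $\frac{\partial x}{\partial X}$.

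Concretely, I would abbreviate $p:=f_0'(X)\big(\frac{\partial x}{\partial X}\big)^{-1}$, which is nothing but $\partial_{\bx}f$ written in Lagrangian variables. Multiplying \eqref{AC_V-Lag:1} by $\frac{\partial x}{\partial X}\,x_t\,p$ yields on the left precisely the claimed dissipation density $\frac{\partial x}{\partial X}\,|x_t\,p|^2$, while the right-hand side becomes $-x_t\,p\,\partial_X p+\frac{\partial x}{\partial X}\,x_t\,p\,F'(f_0)$. I would then collapse these using the two elementary identities $p\,\partial_X p=\tfrac12\partial_X(p^2)$ and $\frac{\partial x}{\partial X}\,p\,F'(f_0)=f_0'\,F'(f_0)=\partial_X\!\big(F(f_0)\big)$, the last equality holding because $f_0$ is a function of $X$ alone.

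After integrating over $\Omega_X$ I would integrate both right-hand terms by parts in $X$. The boundary contributions drop out because the condition $x|_{\partial\Omega}=X|_{\partial\Omega}$ in \eqref{AC_V-Lag:2} holds for every $t$, forcing $x_t|_{\partial\Omega}=0$. What remains is $\tfrac12\int_{\Omega_X}(\partial_X x_t)\,p^2\,dX-\int_{\Omega_X}(\partial_X x_t)\,F(f_0)\,dX$. Using the commutation $\partial_X x_t=\partial_t\frac{\partial x}{\partial X}$ together with the $t$-independence of $f_0$ and $f_0'$, each term is recognized as a genuine time derivative, namely $\tfrac12\int(\partial_X x_t)\,p^2\,dX=-\frac{d}{dt}\int\tfrac12(f_0')^2\big(\frac{\partial x}{\partial X}\big)^{-1}dX$ and $-\int(\partial_X x_t)\,F(f_0)\,dX=-\frac{d}{dt}\int\frac{\partial x}{\partial X}\,F(f_0)\,dX$. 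Adding the two reconstitutes $-\frac{d}{dt}$ of the Lagrangian free energy appearing in \eqref{ener:diss:2:Lag_V}, which is exactly the assertion.

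The computation itself is short, so the real work is careful bookkeeping: tracking signs through the two integrations by parts, applying the chain rule correctly to the two factors of $\big(\frac{\partial x}{\partial X}\big)^{-1}$ buried in $p^2$, and justifying the interchange $\partial_t\partial_X x=\partial_X\partial_t x$. The one genuinely structural ingredient --- and what distinguishes this from a generic parabolic energy estimate --- is the time-independence of $f_0(X)$ guaranteed by \eqref{essential}, which is precisely what turns the potential contribution $\frac{\partial x}{\partial X}F(f_0)$ into a clean total time derivative. One tacitly assumes throughout that $\frac{\partial x}{\partial X}>0$ so that $p$ is well defined; this is an admissibility property of the flow map rather than part of the energy identity to be established here.
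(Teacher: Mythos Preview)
Your proposal is correct and follows essentially the same route as the paper: the test function $\frac{\partial x}{\partial X}\,x_t\,p$ you introduce simplifies to $x_t f_0'(X)$, which (up to an overall sign) is exactly the multiplier the paper uses, and the subsequent identities $p\,\partial_X p=\tfrac12\partial_X(p^2)$, $f_0'F'(f_0)=\partial_X F(f_0)$, integration by parts with $x_t|_{\partial\Omega}=0$, and recognition of $\partial_X x_t$ terms as time derivatives are precisely the steps the paper carries out.
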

\begin{proof}

 Taking the inner product of equation \eqref{AC_V-Lag:1} with $-x_tf'_0(X)$,  since $x_t|_{\partial\Omega}=0$ due to the boundary condition \eqref{AC_V-Lag:2},  we derive by  integration by parts that
\begin{eqnarray*}\label{en:decay:pr}
\begin{split}
-\int_{\Omega_X}&\frac{\partial x}{\partial X}|x_t(X,t)f'_0(X)(\frac{\partial x}{\partial X})^{-1}|^2dX\\
&=(\partial_X\left(f'_0(X)(\frac{\partial x}{\partial X})^{-1}\right)(\frac{\partial x}{\partial X})^{-1},x_tf'_0(X))-(F'(f_0(X)),x_tf'_0(X))
\\&=(\frac 12\partial_X|f'_0(X)(\frac{\partial x}{\partial X})^{-1}|^2,x_t)
-(\partial_X F(f_0(X)),x_t)
\\&=-(\frac 12|f'_0(X)(\frac{\partial x}{\partial X})^{-1}|^2,\partial_Xx_t)
+(F(f_0(X)),\partial_Xx_t)\\
&=\frac{d}{dt}\int_{\Omega_X}\frac{\partial x}{\partial X}\{\frac 12|f'_0(X)(\frac{\partial x}{\partial X})^{-1}|^2+F(f_0(X))\}dX.
\end{split}
\end{eqnarray*}
The last equality is true since
\begin{equation*}
 \frac{d}{dt}\{\frac{\partial x}{\partial X}|f'_0(X)(\frac{\partial x}{\partial X})^{-1}|^2\}=
  |f'_0(X)|^2 \frac{d}{dt} (\frac{\partial x}{\partial X})^{-1}=- |f'_0(X)|^2 (\frac{\partial x}{\partial X})^{-2}  \frac{d}{dt} \frac{\partial x}{\partial X}.
\end{equation*}
\end{proof}

\begin{remark}\label{energy:coor}
We note that the energy in Eulerian coordinate $$E(f)=\int_{\Omega_x}\frac 12|\partial_x f|^2+F(f) d\bx,$$ is  equal to the
energy $E(x(X,t))$ in Lagrangian coordinate,
$$E(x(X,t))=\int_{\Omega_X}\frac{\partial x}{\partial X}\{\frac 12|f'_0(X)(\frac{\partial x}{\partial X})^{-1}|^2+F(f_0(X))\}dX.$$
This can be easily verified using the chain rule $\partial_x f=f'_0(X)(\frac{\partial x}{\partial X})^{-1}$ and the  identity $dx=\frac{\partial x}{\partial X}dX$.
\end{remark}

\begin{remark}
Instead of solving \eqref{G_AC_Var:1}-\eqref{G_AC_Var:2} in the Eulerian coordinate $\bx$ with potentially thin interfacial layers, such as the case if $F(f)=\frac{1}{4\epsilon^2}(f^2-1)^2$ with $\epsilon\ll 1$, which need to be resolved with high spatial resolution, we can solve it in the Lagrangian coordinate $\bX$  free of   thin interfacial layers as follows:
\begin{itemize}
\item  Solve the flow map $\bx(\bX,t)$ from the trajectory equation \eqref{AC_V-Lag:1}-\eqref{AC_V-Lag:2};
\item Once we have the flow map $\bx(\bX,t)$,  the solution of \eqref{G_AC_Var:1}-\eqref{G_AC_Var:2} is given by \eqref{comp}.
\end{itemize}

\end{remark}

\section{Numerical Schemes}
In this section, we  construct energy stable time discretization schemes for the trajectory equation \eqref{AC_V-Lag:1}-\eqref{AC_V-Lag:2} in 1D.
\subsection{ Semi-discrete-in-time schemes}
We start by  constructing  a first order  scheme for Allen-Cahn system \eqref{G_AC_Var:1}-\eqref{G_AC_Var:2} in Lagrangian coordinate.

 Given $\delta t>0$, let $t_n=n\delta t$, $n=0,1,2 \cdots \frac{T}{\delta t}$.  For any function $S(\cdot,t)$,  $S^n$ denotes a numerical  approximation to $S(\cdot,t_n)$.
 
\begin{algorithm} (a first order scheme)
\begin{eqnarray}
&&\frac{x^{n+1}-x^n}{\delta t}f'_0(X)(\frac{\partial x^{n}}{\partial X})^{-1}=-\partial_X\left(f'_0(X)(\frac{\partial x^{n+1}}{\partial X})^{-1}\right)(\frac{\partial x^{n+1}}{\partial X})^{-1} + F'(f_0(X)),\label{G_AC_V:num:1} \\
&& x^{n+1}|_{\partial \Omega}=X|_{\partial \Omega},~~ \quad x^0(X)=X,~~X\in \Omega_X.\label{G_AC_V:num:2}
\end{eqnarray}
\end{algorithm}

\begin{remark} 
Once we solve $x^{n+1}$ from  \eqref{G_AC_V:num:1}-\eqref{G_AC_V:num:2}, the approximate solution to the original equation can be obtained as  $f^{n+1}(x)=f(x^{n+1}(X))=f_0(X)=f_0(\phi^{-1}(x^{n+1}))$. The above relation also indicates that the scheme \eqref{G_AC_V:num:1}-\eqref{G_AC_V:num:2}  preserves maximum principle  since  $\max_{\forall n}|f(x^{n+1}(X))|=\max| f_0(X)|$ where $f_0(X)$ is  the initial condition  in Lagrangian coordinate.
\end{remark}

\begin{thm}\label{first:thm}
Let $x^n$ be the solution of scheme \eqref{G_AC_V:num:1}-\eqref{G_AC_V:num:2} at time $t^n$ with $\frac{\partial x^{n}}{\partial X}>0$.  Then the scheme \eqref{G_AC_V:num:1}-\eqref{G_AC_V:num:2}  admits  a unique solution $x^{n+1}$ with $\frac{\partial x^{n+1}}{\partial X}>0$,  and satisfies the following discrete energy
 law holds:
\begin{equation}\label{energy1}
\begin{split}
&\frac{E(x^{n+1})-E(x^{n})}{\delta t}\leq -\big<\frac{(f'_0(X))^2}{\frac{\partial x^n}{\partial X}}\frac{x^{n+1}-x^n}{\delta t},\frac{x^{n+1}-x^n}{\delta t}\big>,
\end{split}
\end{equation}
where $E(x)=\int_{\Omega_X}\{\frac 12|f'_0(X)(\frac{\partial x}{\partial X})^{-1}|^2+ F(f_0(X))\} \frac{\partial x}{\partial X}\, dX$.
\end{thm}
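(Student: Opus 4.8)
The plan is to split the statement into two essentially independent parts: (i) existence, uniqueness and the positivity $\frac{\partial x^{n+1}}{\partial X}>0$, which I would obtain from a convex variational characterization of the scheme; and (ii) the discrete energy law \eqref{energy1}, which I would obtain by testing \eqref{G_AC_V:num:1} with the discrete analogue of the multiplier $-x_t f_0'(X)$ used in the proof of the continuous law \eqref{ener:diss:2:Lag_V}.

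For part (i), the crucial structural observation is that, because the nonlinear potential is evaluated at the \emph{fixed} datum $f_0(X)$ and not at the unknown, \eqref{G_AC_V:num:1} is exactly the Euler--Lagrange equation of the functional
\begin{equation*}
J(x)=\int_{\Omega_X}\Big\{\tfrac12 (f_0'(X))^2\big(\tfrac{\partial x}{\partial X}\big)^{-1}+F(f_0(X))\tfrac{\partial x}{\partial X}\Big\}\,dX+\frac{1}{2\delta t}\int_{\Omega_X}\frac{(f_0'(X))^2}{\frac{\partial x^n}{\partial X}}\,(x-x^n)^2\,dX
\end{equation*}
posed over $\{x:\ x|_{\partial\Omega}=X|_{\partial\Omega},\ \frac{\partial x}{\partial X}>0\}$. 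Indeed, multiplying \eqref{G_AC_V:num:1} by $f_0'(X)$ and using $f_0'(X)F'(f_0(X))=\partial_X F(f_0(X))$ together with $f_0'(X)\partial_X\big(f_0'(X)(\frac{\partial x}{\partial X})^{-1}\big)(\frac{\partial x}{\partial X})^{-1}=\tfrac12\partial_X\big((f_0'(X))^2(\frac{\partial x}{\partial X})^{-2}\big)$ puts it in divergence form, which is precisely $\delta J/\delta x=0$. I would then note that $J$ is convex: the term $F(f_0(X))\frac{\partial x}{\partial X}$ is \emph{linear} in $x$, the last term is quadratic and nonnegative (using $\frac{\partial x^n}{\partial X}>0$), and $\tfrac12(f_0'(X))^2(\frac{\partial x}{\partial X})^{-1}$ is convex in $x$ since $s\mapsto 1/s$ is convex on $s>0$ and $\frac{\partial x}{\partial X}$ is linear in $x$. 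Coercivity plus strict convexity yield a unique minimizer $x^{n+1}$, while the positivity $\frac{\partial x^{n+1}}{\partial X}>0$ is automatic: the term $(f_0'(X))^2(\frac{\partial x}{\partial X})^{-1}$ acts as a barrier blowing up as $\frac{\partial x}{\partial X}\to0^+$, so the minimizer cannot reach the boundary of the admissible cone.

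For part (ii) I would take the $L^2(\Omega_X)$ inner product of \eqref{G_AC_V:num:1} with $-\frac{x^{n+1}-x^n}{\delta t}f_0'(X)$, which vanishes on $\partial\Omega$ by \eqref{G_AC_V:num:2}. The left side reproduces exactly $-\big\langle\frac{(f_0'(X))^2}{\partial x^n/\partial X}\,\frac{x^{n+1}-x^n}{\delta t},\frac{x^{n+1}-x^n}{\delta t}\big\rangle$, the right-hand side of \eqref{energy1}; integrating by parts on the right (using the same algebra as in the continuous proof and $\partial_X\frac{x^{n+1}-x^n}{\delta t}=\frac{1}{\delta t}(\frac{\partial x^{n+1}}{\partial X}-\frac{\partial x^{n}}{\partial X})$) gives
\begin{equation*}
-\tfrac12\!\int_{\Omega_X}\!(f_0')^2\big(\tfrac{\partial x^{n+1}}{\partial X}\big)^{-2}\frac{\frac{\partial x^{n+1}}{\partial X}-\frac{\partial x^{n}}{\partial X}}{\delta t}\,dX+\int_{\Omega_X}\!F(f_0)\frac{\frac{\partial x^{n+1}}{\partial X}-\frac{\partial x^{n}}{\partial X}}{\delta t}\,dX.
\end{equation*}
The $F$-terms match $\frac{E(x^{n+1})-E(x^n)}{\delta t}$ identically, so everything reduces to comparing the two $(f_0')^2$-terms. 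Setting $p=\frac{\partial x^{n+1}}{\partial X}$, $q=\frac{\partial x^{n}}{\partial X}$ (both positive), the required pointwise inequality is $\tfrac1p-\tfrac1q\le -\tfrac1{p^2}(p-q)$, which after clearing denominators is just $-(p-q)^2\le0$, i.e. the convexity of $s\mapsto 1/s$. This is the discrete counterpart of the identity $\frac{d}{dt}\{\frac{\partial x}{\partial X}|f_0'(\frac{\partial x}{\partial X})^{-1}|^2\}=-|f_0'|^2(\frac{\partial x}{\partial X})^{-2}\frac{d}{dt}\frac{\partial x}{\partial X}$ at the end of the continuous proof, and it is the \emph{sole} source of the inequality (rather than equality) in \eqref{energy1}.

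The step I expect to be most delicate is the rigorous existence/uniqueness/positivity argument when $f_0'(X)$ vanishes on a set of positive measure: there both the barrier and the strictly convex quadratic term in $J$ degenerate, so one must still guarantee $\frac{\partial x^{n+1}}{\partial X}>0$ everywhere and a unique minimizer — e.g. by combining the Dirichlet data with the constraint transmitted from the region where $f_0'\neq0$, or by a maximum-principle argument for $\frac{\partial x}{\partial X}$. Once well-posedness and positivity are in hand, the energy estimate is a routine discrete replay of the continuous computation, with the convexity of $1/s$ supplying the only nontrivial inequality.
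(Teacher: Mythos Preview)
Your proposal is correct and follows essentially the same route as the paper: the paper also identifies \eqref{G_AC_V:num:1}--\eqref{G_AC_V:num:2} as the Euler--Lagrange equation of a convex functional (the same one you wrote, modulo absorbing the linear-in-$x$ terms into a single source $g(X)\phi$) to get existence, uniqueness and positivity, and then tests with $-\frac{x^{n+1}-x^n}{\delta t}f_0'(X)$ and invokes the convexity inequality $\frac1q-\frac1p\ge -\frac1{p^2}(q-p)$ for $p=\frac{\partial x^{n+1}}{\partial X}$, $q=\frac{\partial x^{n}}{\partial X}$ to obtain \eqref{energy1}. Your remark about the degeneracy when $f_0'$ vanishes on a set is a valid caveat that the paper does not address.
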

\begin{proof}
We first prove the existence and uniqueness of the solution of the scheme \eqref{G_AC_V:num:1}-\eqref{G_AC_V:num:2}. To this end, we define a nonlinear functional
\begin{equation}
\begin{split}
J(\phi)&=\int_{\Omega_X}\big\{\frac{1}{2\delta t} \frac{(f'_0(X))^2}{\frac{\partial x^n}{\partial X}}|\phi|^2 +  
\frac 12(f'_0(X))^2({\frac{\partial \phi}{\partial X}})^{-1}-g(X)\phi\big\} dX,
\end{split}
\end{equation}
with $g(X)=\frac{1}{\delta t}\frac{(f'_0(X))^2}{\frac{\partial x^n}{\partial X}} +f'_0(X) F(f_0(X))$.
One can  check that  \eqref{G_AC_V:num:1}-\eqref{G_AC_V:num:2} is the
Euler-Lagrange equation
\begin{equation*}
 \frac{\delta J(\phi)}{\delta \phi}|_{\phi=x^{n+1}}=0,
\end{equation*}
 and that $J(\phi)$ is a convex functional with respect to $\phi$ with $\frac{\partial \phi}{\partial X}>0$,  because of  
\begin{equation*}\label{convex:E}
\begin{split}
\frac{\partial^2}{\partial^2 \eps}\{\frac 12(f'_0(X))^2({\frac{\partial (\phi+\eps \psi)}{\partial X}})^{-1}\}=(f'_0(X))^2(\frac{\partial \phi}{\partial X})^{-3}(\frac{\partial \psi}{\partial X})^2 \geq 0\quad\forall \psi.
\end{split}
\end{equation*} 
  
 Hence, the scheme \eqref{G_AC_V:num:1}-\eqref{G_AC_V:num:2} admits a unique solution $x^{n+1}$ with $\frac{\partial x^{n+1}}{\partial X}>0$.
 
Next, we take the inner product of  \eqref{G_AC_V:num:1} with $-\frac{x^{n+1}-x^n}{\delta t}f'_0(X)$ to obtain
\begin{equation}\label{est1}
\begin{split}
&\int_{\Omega_X}\big\{\partial_X(f'_0(X)(\frac{\partial x^{n+1}}{\partial X})^{-1})(\frac{\partial x^{n+1}}{\partial X})^{-1} - F'(f_0(X))\big\}f'_0(X)\frac{x^{n+1}-x^n}{\delta t}dX
\\&=\int_{\Omega_X}\partial_X(f'_0(X)(\frac{\partial x^{n+1}}{\partial X})^{-1})f'_0(X)(\frac{\partial x^{n+1}}{\partial X})^{-1}\frac{x^{n+1}-x^n}{\delta t}dX
\\&-\int_{\Omega_X}\partial_XF(f_0(X))\frac{x^{n+1}-x^n}{\delta t}dX.
\end{split}
\end{equation}
Due to the convexity of $\frac{1}{y}$ with respect to  $y$ with  $y>0$, we have
$$(\frac{\partial x^{n}}{\partial X})^{-1} -(\frac{\partial x^{n+1}}{\partial X})^{-1}\ge -(\frac{\partial x^{n+1}}{\partial X})^{-2}(\frac{\partial x^{n}}{\partial X}-\frac{\partial x^{n+1}}{\partial X}),$$
which implies
\begin{equation}\label{est2}
\begin{split}
&\int_{\Omega_X}\partial_X(f'_0(X)(\frac{\partial x^{n+1}}{\partial X})^{-1})f'_0(X)(\frac{\partial x^{n+1}}{\partial X})^{-1}\frac{x^{n+1}-x^n}{\delta t}dX
\\&=-\int_{\Omega_X}\frac 12|f'_0(X)(\frac{\partial x^{n+1}}{\partial X})^{-1}|^2\frac{\frac{\partial x^{n+1}}{\partial X}-\frac{\partial x^{n}}{\partial X}}{\delta t}dX
\\&\geq \frac{1}{2\delta t}\int_{\Omega_X}\big((f'_0(X))^2(\frac{\partial x^{n+1}}{\partial X})^{-1}- \int_{\Omega_X}(f'_0(X))^2(\frac{\partial x^{n}}{\partial X})^{-1}\big)dX.
\end{split}
\end{equation}
On the other hand, we have
\begin{equation}\label{est3}
\begin{split}
-\int_{\Omega_X}\partial_XF(f_0(X))\frac{x^{n+1}-x^n}{\delta t}dX
=\int_{\Omega_X}F(f_0(X))\frac{\frac{\partial x^{n+1}}{\partial X}-\frac{\partial x^n}{\partial X}}{\delta t}dX.
\end{split}
\end{equation}
We then derive \eqref{energy1} from the above two relations. 

\end{proof}

\begin{algorithm} (a second-order scheme)

{\bf Step 1:} Compute a second-order extrapolation for $\frac{\partial x^{n+1}}{\partial X}$.

We set
\begin{equation}
\frac{\partial x^{n+1}_{\star}}{\partial X}=\begin{cases}\frac{\partial (2x^{n}-x^{n-1})}{\partial X},& \text{if}\; \frac{\partial x^{n}}{\partial X}\ge  \frac{\partial x^{n-1}}{\partial X},\\
\\
\frac{1}{2/{\frac{\partial x^{n}}{\partial X}} -1/{\frac{\partial x^{n-1}}{\partial X}}} ,& \text{if}\; \frac{\partial x^{n}}{\partial X}<  \frac{\partial x^{n-1}}{\partial X}.\end{cases}
\end{equation}

{\bf Step 2:}
\begin{eqnarray}
&&\frac{3x^{n+1}-4x^n+x^{n-1}}{2\delta t}f'_0(X)(\frac{\partial x^{n+1}_{\star}}{\partial X})^{-1}=-\partial_X(f'_0(X)(\frac{\partial x^{n+1}}{\partial X})^{-1})(\frac{\partial x^{n+1}}{\partial X})^{-1} + F'(f_0(X)),\label{G_AC_V:num:2:3}\\
&& x^{n+1}|_{\partial \Omega}=X|_{\partial \Omega},~~ \quad x^0(X)=X,~~X\in \Omega_X.\label{G_AC_V:num:2:4}
\end{eqnarray}
\end{algorithm}
\begin{thm}\label{second:thm}
Given $x^k$, $k=1,2,..,n$   with $\frac{\partial x^{k}}{\partial X}>0$, the numerical scheme \eqref{G_AC_V:num:2:3}-\eqref{G_AC_V:num:2:4}  admits a unique solution $x^{n+1}$  with $\frac{\partial x^{n+1}}{\partial X}>0$, and the following  energy
dissipative law is satisfied:
\begin{equation}
\begin{split}
\frac{E(x^{n+1})-E(x^{n})}{\delta t}\leq &-\big<\frac{(f'_0(X))^2}{\frac{\partial x^{n+1}_{\star}}{\partial X}}\frac{x^{n+1}-x^n}{\delta t},\frac{x^{n+1}-x^n}{\delta t}\big>
\\&-\big<\frac{(f'_0(X))^2}{\frac{\partial x^{n+1}_{\star}}{\partial X}}\frac{x^{n+1}-2x^n+x^{n-1}}{2\delta t},\frac{x^{n+1}-2x^n+x^{n-1}}{2\delta t}\big>,
\end{split}
\end{equation}
where
\begin{equation}
\begin{split}
E(x^{n+1})&=\int_{\Omega_X}\frac{\partial x^{n+1}}{\partial X}\{\frac 12|f'_0(X)(\frac{\partial x^{n+1}}{\partial X})^{-1}|^2+F(f_0(X))\}dX\\&+\frac{1}{4\delta t}\int_{\Omega_X}(f'_0(X))^2(\frac{\partial x^{n+1}_{\star}}{\partial X})^{-1}|x^{n+1}-x^n|^2dX.
\end{split}
\end{equation}
\end{thm}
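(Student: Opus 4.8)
The plan is to follow the two-part template of the proof of Theorem~\ref{first:thm}: a convex variational characterization for well-posedness, followed by a tested discrete energy identity, now adapted to the BDF2 stencil and the extrapolated Jacobian $\frac{\partial x^{n+1}_\star}{\partial X}$.

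For well-posedness and positivity I would realize \eqref{G_AC_V:num:2:3} as the Euler--Lagrange equation of the BDF2 analogue of the functional used in Theorem~\ref{first:thm},
\[
J(\phi)=\int_{\Omega_X}\Big\{\frac{3}{4\delta t}\frac{(f'_0(X))^2}{\frac{\partial x^{n+1}_\star}{\partial X}}|\phi|^2+\tfrac12(f'_0(X))^2\Big(\frac{\partial\phi}{\partial X}\Big)^{-1}-g(X)\phi-F(f_0(X))\frac{\partial\phi}{\partial X}\Big\}dX,
\]
with $g(X)=\frac{1}{2\delta t}\frac{(f'_0(X))^2}{\frac{\partial x^{n+1}_\star}{\partial X}}(4x^n-x^{n-1})$, so that $\frac{\delta J}{\delta\phi}=0$ at $\phi=x^{n+1}$ reproduces \eqref{G_AC_V:num:2:3} multiplied by $f'_0(X)$. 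The single genuinely new point is that $\frac{\partial x^{n+1}_\star}{\partial X}$ is fully explicit and, decisively, positive: the two branches are engineered precisely so that either $2\frac{\partial x^n}{\partial X}-\frac{\partial x^{n-1}}{\partial X}\ge\frac{\partial x^n}{\partial X}>0$, or its reciprocal equals the linear extrapolant $2(\frac{\partial x^n}{\partial X})^{-1}-(\frac{\partial x^{n-1}}{\partial X})^{-1}>0$. Hence the quadratic weight is positive, and, exactly as in Theorem~\ref{first:thm}, the reciprocal term is strictly convex on $\{\partial\phi/\partial X>0\}$ and acts as a barrier; strict convexity of $J$ then gives a unique minimizer $x^{n+1}$ with $\frac{\partial x^{n+1}}{\partial X}>0$.

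For the energy law I would test \eqref{G_AC_V:num:2:3} with $-f'_0(X)\frac{x^{n+1}-x^n}{\delta t}$, whose boundary trace vanishes by \eqref{G_AC_V:num:2:4}, and integrate over $\Omega_X$. The spatial and potential terms are handled verbatim as in \eqref{est1}--\eqref{est3}: integration by parts and the convexity inequality for $y\mapsto1/y$ telescope them into the increment of the standard part $\int_{\Omega_X}\frac{\partial x}{\partial X}\{\tfrac12|f'_0(\frac{\partial x}{\partial X})^{-1}|^2+F(f_0)\}dX$ of $E$. All the new work sits in the discrete time term. Setting $D=\frac{x^{n+1}-x^n}{\delta t}$, $D^-=\frac{x^n-x^{n-1}}{\delta t}$ and $S=\frac{x^{n+1}-2x^n+x^{n-1}}{2\delta t}$, one has $\frac{3x^{n+1}-4x^n+x^{n-1}}{2\delta t}=D+S$ and $2S=D-D^-$. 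With $\mu=\frac{(f'_0)^2}{\partial x^{n+1}_\star/\partial X}>0$ and $\|v\|_\mu^2:=\int_{\Omega_X}\mu|v|^2dX$, the tested time term is $-\langle\mu D,D\rangle-\langle\mu S,D\rangle$, and the polarization identity $\langle\mu(D-D^-),D\rangle=\tfrac12(\|D\|_\mu^2-\|D^-\|_\mu^2)+\tfrac12\|D-D^-\|_\mu^2$ turns the cross term into
\[
-\langle\mu S,D\rangle=-\tfrac14\|D\|_\mu^2+\tfrac14\|D^-\|_\mu^2-\langle\mu S,S\rangle .
\]
Here $\langle\mu S,S\rangle$ is precisely the second dissipation term; $-\langle\mu D,D\rangle$ is the first; and $+\tfrac14\|D\|_\mu^2$, moved to the left, is exactly the extra quadratic term $\frac{1}{4\delta t}\int(f'_0)^2(\frac{\partial x^{n+1}_\star}{\partial X})^{-1}|x^{n+1}-x^n|^2dX$ distinguishing $E(x^{n+1})$ from the standard energy, divided by $\delta t$.

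The main obstacle is the remaining term $-\tfrac14\|D^-\|_\mu^2$, which must be identified with the corresponding extra term of $E(x^n)$. The latter carries the weight $(\frac{\partial x^{n}_\star}{\partial X})^{-1}$ built from $x^{n-1},x^{n-2}$, whereas $-\tfrac14\|D^-\|_\mu^2$ carries the weight $(\frac{\partial x^{n+1}_\star}{\partial X})^{-1}$; closing the one-step inequality therefore reduces to a comparison of two consecutive extrapolated weights, and this is exactly what the two-branch construction is designed to supply. Indeed, by convexity of $y\mapsto1/y$ the extrapolant satisfies both $\frac{\partial x^{n+1}_\star}{\partial X}\ge2\frac{\partial x^n}{\partial X}-\frac{\partial x^{n-1}}{\partial X}$ and $(\frac{\partial x^{n+1}_\star}{\partial X})^{-1}\ge2(\frac{\partial x^n}{\partial X})^{-1}-(\frac{\partial x^{n-1}}{\partial X})^{-1}$, so it always lies on the side of the actual Jacobians that drives the weight comparison in the dissipative direction. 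I would isolate and prove this comparison as a short lemma; granting it, collecting the standard-energy increment, the two modified-energy increments and the two nonnegative dissipation terms yields the asserted inequality.
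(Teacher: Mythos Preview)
Your overall strategy---convex functional for well-posedness, testing with $-f'_0(X)\frac{x^{n+1}-x^n}{\delta t}$, the BDF2 algebraic identity, and the convexity estimate \eqref{est2} for the spatial terms---is exactly the paper's. Your decomposition $-\langle\mu D,D\rangle-\langle\mu S,D\rangle$ and the polarization step reproduce the paper's identity $(3a-4b+c,2(a-b))=5|a-b|^2-|b-c|^2+|a-2b+c|^2$ verbatim.

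The genuine gap is your handling of what you call ``the main obstacle.'' You try to close the inequality by proving a comparison between the consecutive weights $(\frac{\partial x^{n+1}_\star}{\partial X})^{-1}$ and $(\frac{\partial x^{n}_\star}{\partial X})^{-1}$. The inequalities you state for the two-branch extrapolant are correct, but they relate $\frac{\partial x^{n+1}_\star}{\partial X}$ only to $\frac{\partial x^{n}}{\partial X}$ and $\frac{\partial x^{n-1}}{\partial X}$; they say nothing about $\frac{\partial x^{n}_\star}{\partial X}$, which is built from $\frac{\partial x^{n-1}}{\partial X}$ and $\frac{\partial x^{n-2}}{\partial X}$. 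In fact the pointwise inequality $(\frac{\partial x^{n+1}_\star}{\partial X})^{-1}\ge(\frac{\partial x^{n}_\star}{\partial X})^{-1}$ you need is false in general: take Jacobian values $1,2,4$ at three successive steps (linear branch applies) and the extrapolants are $3$ and $6$, going the wrong way. So the lemma you propose cannot be proved.

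The paper avoids this entirely. In its statement only $E(x^{n+1})$ is written out; inspecting the proof, the term subtracted as ``$E(x^n)$'' carries the \emph{same} weight $(\frac{\partial x^{n+1}_\star}{\partial X})^{-1}$, not $(\frac{\partial x^{n}_\star}{\partial X})^{-1}$. With that reading your term $-\tfrac14\|D^-\|_\mu^2$ is already exactly the extra piece of $E(x^n)$, no comparison is needed, and the inequality follows immediately from the algebra you have. The price is that the resulting one-step inequality does not telescope across $n$; the paper accepts this, and the two-branch construction of $x^{n+1}_\star$ is used only to guarantee positivity of the explicit weight, not any monotonicity in $n$.
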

\begin{proof}
As in the proof of Theorem \ref{first:thm},  one can construct a convex functional such that its Euler Lagrange equation is equivalent to
the scheme \eqref{G_AC_V:num:2:3}-\eqref{G_AC_V:num:2:4}. Hence, the scheme admits  a unique solution $x^{n+1}$  with $\frac{\partial x^{n+1}}{\partial X}>0$.  

Next, taking the inner product of equation \eqref{G_AC_V:num:2:3} with $-f_0'(X)\frac{x^{n+1}-x^n}{\delta t}$ and using the equality,
\begin{equation}
(3a-4b+c,2(a-b))=5|a-b|^2-|b-c|^2+|a-2b+c|,
\end{equation}
 the left hand side becomes
\begin{equation}
\begin{split}
-(\frac{3x^{n+1}-4x^n+x^{n-1}}{2\delta t}&(f'_0(X))^2(\frac{\partial x^{n+1}_{\star}}{\partial X})^{-1},\frac{x^{n+1}-x^n}{\delta t})
\\=&-5\int_{\Omega_X}\frac{1}{4\delta t^2}(f'_0(X))^2(\frac{\partial x^{n+1}_{\star}}{\partial X})^{-1}|x^{n+1}-x^n|^2dX\\&+\int_{\Omega_X}\frac{1}{4\delta t^2}(f'_0(X))^2(\frac{\partial x^{n+1}_{\star}}{\partial X})^{-1}|x^{n}-x^{n-1}|^2dX\\&-\int_{\Omega_X}\frac{1}{4\delta t^2}(f'_0(X))^2(\frac{\partial x^{n+1}_{\star}}{\partial X})^{-1}|x^{n+1}-2x^{n}+x^{n-1}|^2dX.
\end{split}
\end{equation}
The right hand side can be treated exactly the same way as 
in the proof of Theorem \ref{first:thm}, see \eqref{est1}-\eqref{est2}. Combining these results, we derive the following energy dissipative law
\begin{equation*}
\begin{split}
\frac{E(x^{n+1})-E(x^{n})}{\delta t}&\leq -\frac{1}{\delta t^2}\int_{\Omega_X}(f'_0(X))^2(\frac{\partial x^{n+1}_{\star}}{\partial X})^{-1}|x^{n+1}-x^n|^2dX\\&-\frac{1}{4\delta t^2}\int_{\Omega_X}(f'_0(X))^2(\frac{\partial x^{n+1}_{\star}}{\partial X})^{-1}|x^{n+1}-2x^{n}+x^{n-1}|^2dX.
\end{split}
\end{equation*}

\end{proof}

\begin{remark}
If we consider logarithmic free energy function  $F(f)=\frac{\theta}{2}[(1+f)\log(1+f)+(1-u)\log(1-u)]-\frac{\theta_c}{2}f^2$, where $\theta,\theta_c$ are two positive constants.  Since $F'(f_0(X))$ is known in  {\bf Scheme 1} and {\bf Scheme 2},  so  the positive property of solution $0<1-f^{n+1}, 0<f^{n+1}+1$ is preserved naturally.  Then comparing with numerical methods in Eulerian coordinate, it is more convenient to solve Allen-Cahn equation with logarithmic free energy by using flow dynamic approach.
\end{remark}

\subsection{Fully discrete schemes}
We now describe fully discrete schemes with a Galerkin approximation in space. For the sake of brevity, we only consider fully discretization for {\bf Scheme 1}.  Fully discretization for {\bf Scheme 2} can be constructed similarly.

Let $V_h\in H^1(\Omega_X)$ be a finite dimensional approximation space and $V^0_h=V_h\cup H^1_0(\Omega_X)$, a fully discrete version of {\bf Scheme 1} is:
Find $x_h^{n+1} \in V_h$  such that
\begin{eqnarray}
&&(\frac{1}{2}|f_0'(X)(\frac{\partial x_h^{n+1}}{\partial X})^{-1}|^2,\partial_Xy_h)+( F'(f_0(X)),y_hf'_0(X))\nonumber \\
&&\hskip 1in= (\frac{x_h^{n+1}-x_h^n}{\delta t}f'_0(X)(\frac{\partial x_h^{n}}{\partial X})^{-1},y_hf'_0(X)),\quad\forall y_h\in V_h^0,\label{fem:AC:num:1}\\
&& x_h^{n+1}|_{\partial \Omega}=X|_{\partial \Omega}, \quad x_h^0(X)= X,~~X\in \Omega.\label{fem:AC:num:2}
\end{eqnarray}

In our numerical tests, we set the domain to be $\Omega_x=\Omega_X=(-1,1)$, and  use two different spatial discretizations. The first is the Legendre-Galerkin method \cite{Shen94b} with
\begin{equation}
 V_h:=V_N=\text{span}\{L_j(x): j=0,1,\cdots,N\},
\end{equation}
where  $L_j(x)$ is the Legendre polynomial of $j-$th degree, and
  \begin{equation}
 V^0_h:=V^0_N=\text{span}\{\phi_j(x):=L_j(x)-L_{j+2}(x): j=0,1,\cdots,N-2\}.
\end{equation}
The other  is the piecewise linear finite-element method.

The  scheme  \eqref{fem:AC:num:1}-\eqref{fem:AC:num:2}  leads to a nonlinear system: $G(x_h^{n+1})=0$ at each time step, which can be effectively solved by using, for example,  
 a damped Newton's iteration \cite{nesterov1994interior}:\
$$x_h^{n+1,k+1}=x^{n+1,k}_h-\alpha(\delta_x)(\nabla G(x^{n+1,k}_h))^{-1}G(x^{n+1,k}_h),$$  with $\alpha=O(\eps^2)$ as the damped coefficient.

Using exactly the same arguments as in the proof of Theorem \ref{first:thm}, we can establish the following:
\begin{thm}
 Given $x_h^n\in V_h$ with $\frac{\partial x_h^{n}}{\partial X}>0$.  Then the scheme \eqref{fem:AC:num:1}-\eqref{fem:AC:num:2}  admits  a unique solution $x_h^{n+1}$ with $\frac{\partial x_h^{n+1}}{\partial X}>0$,  and satisfies the following discrete energy
 law holds:
\begin{equation}\label{energy1}
\begin{split}
&\frac{E(x_h^{n+1})-E(x_h^{n})}{\delta t}\leq -\big<\frac{(f'_0(X))^2}{\frac{\partial x_h^n}{\partial X}}\frac{x_h^{n+1}-x_h^n}{\delta t},\frac{x_h^{n+1}-x_h^n}{\delta t}\big>,
\end{split}
\end{equation}
where $E(x)=\int_{\Omega_X}\{\frac 12|f'_0(X)(\frac{\partial x}{\partial X})^{-1}|^2+ F(f_0(X))\} \frac{\partial x}{\partial X}\, dX$.
\end{thm}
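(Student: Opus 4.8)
The plan is to mirror the proof of Theorem \ref{first:thm} step by step, the single genuinely new point being that every manipulation must be legitimate when the trial and test functions are confined to the finite-dimensional spaces $V_h$ and $V_h^0$. I would first establish existence, uniqueness and positivity of the Jacobian by the same convexity argument. On the affine set $A_h=\{\phi_h\in V_h:\ \phi_h|_{\partial\Omega}=X|_{\partial\Omega}\}$ define the discrete functional
\begin{equation*}
J_h(\phi_h)=\int_{\Omega_X}\Big\{\frac{1}{2\delta t}\frac{(f'_0(X))^2}{\partial x_h^n/\partial X}|\phi_h|^2+\frac 12 (f'_0(X))^2\big(\frac{\partial \phi_h}{\partial X}\big)^{-1}-g(X)\phi_h\Big\}\,dX,
\end{equation*}
the discrete counterpart of the functional in Theorem \ref{first:thm}, and check that its Euler--Lagrange equation tested against $V_h^0$ is exactly \eqref{fem:AC:num:1}--\eqref{fem:AC:num:2}. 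The subset of $A_h$ on which $\partial\phi_h/\partial X>0$ is open and convex, since $\phi_h\mapsto\partial\phi_h/\partial X$ is linear, and on it $J_h$ is strictly convex by the same second-variation computation used in Theorem \ref{first:thm}. Because the barrier term $\frac12(f_0')^2(\partial\phi_h/\partial X)^{-1}$ forces $J_h\to+\infty$ as $\partial\phi_h/\partial X\to0^+$ while the quadratic term renders $J_h$ coercive as $\|\phi_h\|\to\infty$, the functional attains a unique minimizer in the interior, which is the sought $x_h^{n+1}$ with $\partial x_h^{n+1}/\partial X>0$.

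Next I would derive the energy law. The crucial observation is that $x_h^{n+1}$ and $x_h^n$ carry the same Dirichlet data, so their difference vanishes on $\partial\Omega$ and hence $w_h:=(x_h^{n+1}-x_h^n)/\delta t\in V_h^0$ is an admissible test function. Taking $y_h=w_h$ in \eqref{fem:AC:num:1}, the right-hand side collapses to the dissipation quantity $\langle\frac{(f_0')^2}{\partial x_h^n/\partial X}w_h,\,w_h\rangle$. On the left, the first term $\big(\frac12|f'_0(\partial x_h^{n+1}/\partial X)^{-1}|^2,\partial_X w_h\big)$ is bounded above by minus the increment of the gradient part of $E$ via the pointwise convexity inequality for $1/y$ exactly as in \eqref{est2} (valid pointwise in $X$ because both Jacobians are positive), while the potential term $(F'(f_0),w_h f_0')=(\partial_X F(f_0),w_h)$ equals minus the increment of the potential part of $E$ after the integration by parts of \eqref{est3}, the boundary contribution vanishing since $w_h\in V_h^0$. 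Since the left side equals the dissipation produced on the right, rearranging yields the stated discrete energy law. Every one of these steps is an ordinary integration by parts or pointwise inequality in the variable $X$ applied to genuine $H^1$ functions, so the Galerkin restriction costs nothing beyond the admissibility of $w_h$.

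The main obstacle is the positivity assertion $\partial x_h^{n+1}/\partial X>0$, which underpins both the well-posedness and the convexity inequality used in the energy estimate. In the continuous scheme this is enforced automatically by the barrier singularity of $(\partial\phi/\partial X)^{-1}$; in the discrete case one must confirm that the same singularity keeps the minimizer strictly inside the positive-Jacobian region of $V_h$. For piecewise-linear elements $\partial x_h/\partial X$ is piecewise constant, so positivity is an elementwise statement and the barrier blows up the instant the Jacobian vanishes on any element; for the Legendre--Galerkin space $\partial x_h/\partial X$ is a polynomial and one needs $\int_{\Omega_X}(f_0')^2(\partial\phi_h/\partial X)^{-1}\,dX$ to diverge as this polynomial approaches zero, which again furnishes the barrier. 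I would also record that coercivity of $J_h$ together with nondegeneracy of its quadratic part (where $f_0'\neq0$) supplies the strict convexity needed for uniqueness, in exact parallel with the reasoning behind Theorem \ref{first:thm}.
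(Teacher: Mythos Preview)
Your proposal is correct and follows essentially the same approach as the paper, which does not give a separate proof but simply states that the arguments of Theorem~\ref{first:thm} carry over verbatim. You are in fact more careful than the paper in checking that the increment $w_h=(x_h^{n+1}-x_h^n)/\delta t$ lies in $V_h^0$ and in articulating why the barrier term of $J_h$ keeps the discrete minimizer in the positive-Jacobian region of $V_h$; these are exactly the points where the Galerkin restriction could conceivably cause trouble, and the paper leaves them implicit.
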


\section{Some extensions}
We consider in the section two immediate extensions of our flow dynamic approach.
\subsection{Allen-Cahn equation with advection}
We consider here a generalized Allen-cahn equation \eqref{move:sharp:1} with an advection term:  
\begin{eqnarray}
&&f_t+\bv\cdot\Grad_{\bx} f=(\Delta_{\bx} f-\frac{1}{\eps^2}f(f^2-1)), \label{move:sharp:1}
\end{eqnarray}
where $\bv$ is a given velocity field.
We still assume that there exists a velocity field $\bu$ satisfying the  kinematic equation 
\begin{eqnarray}
&&f_t+\bu\cdot\Grad_{\bx} f=0,\label{move:sharp:2}
\end{eqnarray}
so we can define the flow map  \eqref{flow_map_AC}. Using \eqref{move:sharp:2}, we can rewrite \eqref{move:sharp:1} as
\begin{equation}\label{vel:adve}
(\bv-\bu)\cdot\Grad_{\bx} f=\Delta_{\bx} f-\frac{1}{\eps^2}f(f^2-1).
\end{equation}

Let us consider now the 1-D case. By using the flow map
$\frac{dx(X,t)}{dt}=\bu$ and the  chain rule $\partial_x f=f'_0(X)(\frac{\partial x}{\partial X})^{-1}$, we can derive from  \eqref{vel:adve} in Eulerian coordinate the  trajectory equation in Lagrangian coordinate:
\begin{eqnarray} 
&& (x_t(X,t)-\bv)f'_0(X)(\frac{\partial x}{\partial X})^{-1}=-\partial_X(f'_0(X)(\frac{\partial x}{\partial X})^{-1})(\frac{\partial x}{\partial X})^{-1} + F'(f_0(X)),\label{mv:1}\\
&&x|_{\partial \Omega}=X|_{\partial \Omega}  ~~  \mbox{and} ~~  \quad x(X,0)=X,~~X\in \Omega.\label{GAC_V-Lag}
\end{eqnarray}
Similar to the trajectory equation \eqref{AC_V-Lag} for the Allen-Cahn equation, we can construct first- and second-order schemes for \eqref{GAC_V-Lag} as in the last section. We leave the detail to the interested readers.

\subsection{Two dimensional axis-symmetric case}
We  consider the Allen-Cahn equation  \eqref{G_AC_Var:2} in a two dimensional axis-symmetric domain $\Omega$. To fix the idea, we set $\Omega=\{x^2+y^2< h^2\}$. Using the polar transform 
$x=rcos(\theta), y=rsin(\theta)$, we can rewrite \eqref{G_AC_Var:2} in polar coordinates for the axis-symmetric case as 
\begin{equation}\label{AC-axi}
\begin{split}
&f_t-\frac 1r\partial_r (r\partial_r f)+F'(f)=0; \quad f(h,t)=0\\
&f(r,0)=f_0(r),
\end{split}
\end{equation}
and the associated  flow map \eqref{flow_map_AC}  for the axis-symmetric case as
\begin{equation}\label{map:2D:r}
\begin{split}
&\frac{dr(R,t)}{dt}=u,\\
&r(R,0)=R, 
\end{split}
\end{equation}
where $R$ is the Lagrangian coordinate and $r$ is Eulerian coordinate. Then, the assumed transport equation \eqref{transport} takes the form
\begin{eqnarray}\label{transport2}
&&f_t+u f_r=f_t+\frac{dr(R,t)}{dt}f_r=0,\\
&&f|_{t=0}=f_0(r), 
\end{eqnarray}
which is equivalent to $f(r(R,t),t)=f(r,0)=f_0(R)$ because of flow map  \eqref{map:2D:r} (cf. Remark 2.1). We  then derive from \eqref{transport2} and \eqref{AC-axi} the following force balance equation
\begin{equation}\label{AC-axi2}
\frac{dr(R,t)}{dt}f_r=-\frac 1r\partial_r (r\partial_r f)+ F'(f).
\end{equation}
Using the chain rule $\partial_r f=f_0'(R) (\frac{\partial r}{\partial R})^{-1}$, we arrive at
the trajectory equation in polar coordinate:
\begin{equation}\label{All:polar}
\begin{split}
&r_t(\frac{\partial r}{\partial R})^{-1}\partial_Rf_0(R)=-\frac{1}{r(R)}(\frac{\partial r}{\partial R})^{-1}\partial_R(r(R)(\frac{\partial r}{\partial R})^{-1}\partial_Rf_0(R))+ F'(f_0(R)),\\
&r(h,t)=h,\;r(R,0)=R.
\end{split}
\end{equation}
\begin{thm}\label{dissipative:polar}
The Allen-Cahn equation in Lagrangian coordiante  \eqref{All:polar}  satisfies the following energy
dissipative law
\begin{equation}
\begin{split}
&\frac{d}{dt}\int_{\Omega_R}\{\frac 12 |(\frac{\partial r}{\partial R})^{-1}\partial_Rf_0(R)|^2
+F(f_0(R))\}det\frac{\partial r}{\partial R} RdR\\&=-\int_{\Omega_R}|r_t\partial_Rf_0(R))(\frac{\partial r}{\partial R})^{-1}|^2 det\frac{\partial r}{\partial R} RdR.
\end{split}
\end{equation}

\end{thm}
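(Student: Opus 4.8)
The plan is to mirror the energy estimate used for the one–dimensional trajectory equation in Theorem 2.1, adapted to the polar weight. Write $a=\frac{\partial r}{\partial R}$ and $g=(\frac{\partial r}{\partial R})^{-1}\partial_R f_0(R)=a^{-1}f_0'(R)$, so that $g$ plays the role of $\partial_r f$ and the energy in the statement is $E=\int_{\Omega_R}\{\frac12 g^2+F(f_0)\}\,r\,a\,dR$, the factor $\det\frac{\partial r}{\partial R}\,R$ being the polar area element $r\,dr=r\,a\,dR$. The structural observation inherited from the 1-D case is that $f_0$ is independent of $t$, so all time dependence sits in $a$ and $r$, with the compatibility relation $\partial_R r_t=\partial_t(\partial_R r)=a_t$.

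First I would take the inner product of \eqref{All:polar} with $-r\,r_t\,f_0'(R)$ over $\Omega_R$; multiplying by the extra factor $r$ cancels the $\frac1r$ in front of the diffusion term, exactly as taking the product with $-x_t f_0'$ worked in the 1-D proof. The left-hand side then produces $-\int_{\Omega_R} r_t^2 (f_0')^2 a^{-1} r\,dR=-\int_{\Omega_R}|r_t f_0' a^{-1}|^2\,a\,r\,dR$, which is precisely the dissipation term on the right of the asserted identity.

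Next I would process the right-hand side. After the cancellation, the diffusion contribution becomes $\int_{\Omega_R}\partial_R(rg)\,r_t\,g\,dR$. Unlike the 1-D case, $\partial_R(rg)$ is not proportional to $\partial_R(g^2)$ because of the extra $r$, so the single integration by parts used there is unavailable; instead I would integrate by parts twice, using $\partial_R r_t=a_t$, to rewrite it as $\frac12\int_{\Omega_R} g^2(a\,r_t-r\,a_t)\,dR$, and treat the potential term via $F'(f_0)f_0'=\partial_R F(f_0)$ and one further integration by parts to get $\int_{\Omega_R}F(f_0)(a\,r_t+r\,a_t)\,dR$. All boundary contributions vanish: at $R=h$ because $r_t=0$ (since $r(h,t)=h$ is held fixed), and at the origin $R=0$ because $r(0,t)=0$, which I would record as the standing regularity assumption at the coordinate singularity.

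Finally I would recognize the assembled right-hand side as $\frac{d}{dt}E$, via the two pointwise identities $\frac{d}{dt}\{\frac12 g^2 r a\}=\frac12 g^2(a\,r_t-r\,a_t)$ — which holds because $\frac12 g^2 ra=\frac12 (f_0')^2 a^{-1} r$ and $f_0'$ is time-independent, the exact analogue of the identity closing Theorem 2.1 — together with $\frac{d}{dt}\{F(f_0) r a\}=F(f_0)(a\,r_t+r\,a_t)$. Matching these against the two processed terms yields the stated law. I expect the main obstacle to be purely technical: the bookkeeping of the $r$ and $1/r$ factors through the repeated integrations by parts, and the careful justification of the vanishing boundary contribution at the polar origin $R=0$; the remainder is a direct transcription of the one-dimensional argument.
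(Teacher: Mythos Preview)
Your proposal is correct and follows essentially the same strategy as the paper: test \eqref{All:polar} against $-r\,r_t\,f_0'(R)$, use $F'(f_0)f_0'=\partial_R F(f_0)$ for the potential term, and reduce the diffusion term to $\tfrac12\int g^2(a\,r_t-r\,a_t)\,dR$ before recognizing the total as $\tfrac{d}{dt}E$. The only cosmetic difference is in the diffusion step: the paper groups the integrand as $\tfrac12\partial_R|rg|^2\cdot\tfrac{r_t}{r}$ and integrates by parts once against $\tfrac{r_t}{r}$, whereas you expand $\partial_R(rg)=ag+rg_R$ and integrate by parts on the $rg_R$ piece; both routes are a single integration by parts (not two) and yield the same expression, so this is not a genuinely different argument.
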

\begin{proof}
Taking inner product of equation \eqref{All:polar} with $-rr_t\partial_Rf_0(R)$,  we obtain
\begin{equation}
\begin{split}
&(r_t(\frac{\partial r}{\partial R})^{-1}\partial_Rf_0(R),-rr_t\partial_Rf_0(R))=(-\frac{1}{r(R)}(\frac{\partial r}{\partial R})^{-1}\partial_R(r(R)(\frac{\partial r}{\partial R})^{-1}\partial_Rf_0(R)) , -rr_t\partial_Rf_0(R))\\& \hskip 4cm +( F'(f_0(R)),-rr_t\partial_Rf_0(R)).
\end{split}
\end{equation}
Notice  that $\int_{\Omega_r}rdr=\int_{\Omega_R}det\frac{\partial r}{\partial R}RdR=\int_{\Omega_R}rr'(R)dR$, we derive the equality
\begin{equation}
det\frac{\partial r}{\partial R}=\frac{r\frac{\partial r}{\partial R}}{R}.
\end{equation}
We obtain
\begin{equation}\label{polar:term0}
\begin{split}
&(r_t(\frac{\partial r}{\partial R})^{-1}\partial_Rf_0(R),-rr_t\partial_Rf_0(R))=-\int_{\Omega_R}|r_t\partial_Rf_0(R))(\frac{\partial r}{\partial R})^{-1}|^2r\frac{\partial r}{\partial R} dR\\
&=-\int_{\Omega_R}|r_t\partial_Rf_0(R))(\frac{\partial r}{\partial R})^{-1}|^2 det\frac{\partial r}{\partial R} RdR.
\end{split}
\end{equation}
Taking integration by part, we derive
\begin{equation}\label{polar:term1}
\begin{split}
&( F'(f_0(R)),-rr_t\partial_Rf_0(R))=(\partial_R F(f_0(R)),-rr_t )=(F(f_0(R)), \frac{\partial r}{\partial R}r_t+r\frac{\partial r_t}{\partial R})\\&
=\frac{d}{dt}\int_{\Omega_R}F(f_0(R)) r\frac{\partial r}{\partial R}dR=\frac{d}{dt}\int_{\Omega_R}F(f_0(R)) det\frac{\partial r}{\partial R} RdR.
\end{split}
\end{equation}
We consider  
\begin{equation}\label{polar:term2}
\begin{split}
&(-\frac{1}{r(R)}(\frac{\partial r}{\partial R})^{-1}\partial_R(r(R)(\frac{\partial r}{\partial R})^{-1}\partial_Rf_0(R)) , -rr_t\partial_Rf_0(R))=
(\frac 12\partial_R|r(R)(\frac{\partial r}{\partial R})^{-1}\partial_Rf_0(R)|^2,\frac{r_t}{r})
\\&=-(\frac 12|r(\frac{\partial r}{\partial R})^{-1}\partial_Rf_0(R)|^2,\partial_R(\frac{r_t}{r}))=
-(\frac 12|(\frac{\partial r}{\partial R})^{-1}\partial_Rf_0(R)|^2,rr_{tR}-r_t\frac{\partial r}{\partial R})\\&=
\frac{d}{dt}\int_{\Omega_R}\frac 12 |\partial_Rf_0(R)|^2r(\frac{\partial r}{\partial R})^{-1}dR=
\frac{d}{dt}\int_{\Omega_R}\frac 12 |(\frac{\partial r}{\partial R})^{-1}\partial_Rf_0(R)|^2r\frac{\partial r}{\partial R}dR\\&=
\frac{d}{dt}\int_{\Omega_R}\frac 12 |(\frac{\partial r}{\partial R})^{-1}\partial_Rf_0(R)|^2det\frac{\partial r}{\partial R} RdR.
\end{split}
\end{equation}
Finally, combining equations \eqref{polar:term0}-\eqref{polar:term2}, we obtain the energy
dissipative law.

\end{proof}

\begin{remark}
Similar with Remark \ref{energy:coor},  the energy dissipative law in Theorem \ref{dissipative:polar} is equivalent with energy dissipative law in Eulerian coordinate by using the chain rule $\partial_r f=f_0'(R) (\frac{\partial r}{\partial R})^{-1}$,
\begin{eqnarray}
\frac{d}{dt}\int_{\Omega_{r}}\{\frac 12|\partial_r f|^2+\frac{1}{4\eps^2}(f^2-1)^2\}rdr
=-\int_{\Omega_{r}}|uf_r|^2rdr.
\end{eqnarray}
\end{remark}

Similarly, we can construct first and  second schemes for the above equation. For example, a  first-order  scheme   for  \eqref{All:polar} is as follows:
\begin{equation}\label{All:polar:BDF1}
\begin{split}
&\frac{r^{n+1}-r^n}{\delta t}(\frac{\partial r^{n}}{\partial R})^{-1}\partial_Rf_0(R)=-(\frac{\partial r^{n+1}}{\partial R})^{-1}\partial_R((\frac{\partial r^{n+1}}{\partial R})^{-1}\partial_Rf_0(R))\\&-\frac{1}{r^{n+1}(R)}(\frac{\partial r^{n+1}}{\partial R})^{-1}\partial_Rf_0(R) + F'(f_0(R)),\\
&r^{n+1}|_{r=h}=h,\,r(R,0)=R.
\end{split}
\end{equation}

\section{Numerical experiments}
In this section, we  present some numerical tests to show the efficiency, stability and accuracy of the numerical schemes  \eqref{fem:AC:num:1}-\eqref{fem:AC:num:2} and its second-order version for the Allen-Cahn equation \eqref{G_AC_Var:1}-\eqref{G_AC_Var:2} with $F(f)=\frac1{4\epsilon^2}(f^2-1)^2$. In the following, we set $\Omega_x=\Omega_X=(-1,1)$ and use, as spatial discretization, the Legendre-Galerkin method \cite{Shen94b} and the  piecewise linear finite-element method.

\subsection{Accuracy test}

 We first perform an accuracy test.  We used Legendre-Galerkin method in space so that the spatial error is negligible compared with the temporal error.
 We start with a smooth initial condition $f_0(x)=x$ and using solution computed by  the second-order scheme  with  $\delta t=10^{-5}$   as the reference solution.
   In Fig.\,\ref{order} we plot the $L^{\infty}$ error between numerical solution and reference solution at time $t=0.1$. We observe that  the first-order scheme BDF1  achieves first-order convergence while  the second-order scheme BDF2 achieves second-order convergence.

\begin{figure}
\centering
\includegraphics[width=0.55\textwidth,clip==]{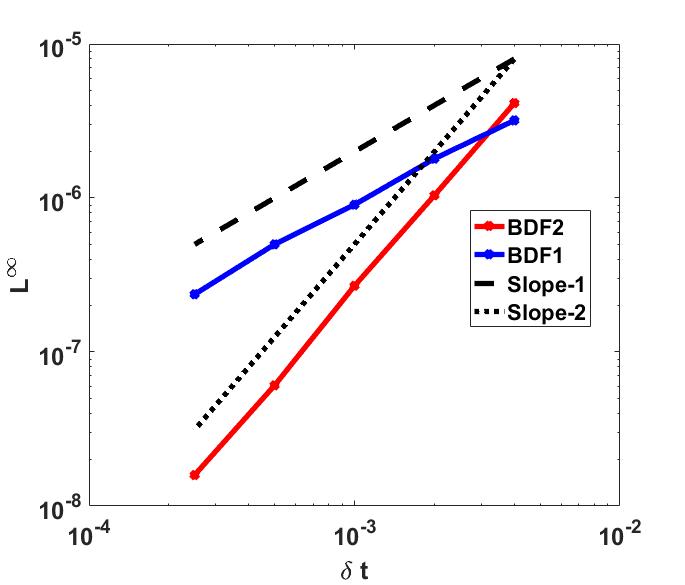}
\caption{Accuracy test for the Allen-Cahn equation \eqref{G_AC_Var:1}-\eqref{G_AC_Var:2}.}\label{order}
\end{figure}

\subsection{Interface capturing}
 
We now present  numerical simulations to demonstrate the effectiveness of our new Lagrangian approach for interface capturing.  In Fig.\,\ref{interface_0}  we choose  interface width parameter as $\eps^2=0.001$ and initial condition as $f_0(x)=1-x^2$.   We depict  profiles of interface at various time in Fig.\,\ref{interface_0}.(a) and in Fig.\,\ref{interface_0}.(b) using the second-order new Lagrangian scheme  with spectral method and finite element method in space, and in Fig.\,\ref{interface_0}.(c) using the second-order semi-implicit  method in Eulerian coordinate  with spectral method in space. We   observe that the profiles of interface can be well captured  with mesh resolution of $N=64$ by the Lagrangian  method, as compared with $N=256$ by the Eulerian method.  We also plot in Fig.\,\ref{interface_0}.(d), the mesh distribution of the Lagrangian method in Eulerian coordinate. We observe that as interface getting steeper,  more points will move closer to the  interface area.

\begin{figure}
\centering
\subfigure[Flow dynamic approach with Spectral method:$N=64$ and $\eps^2=0.001$.]{
\includegraphics[width=0.45\textwidth,clip==]{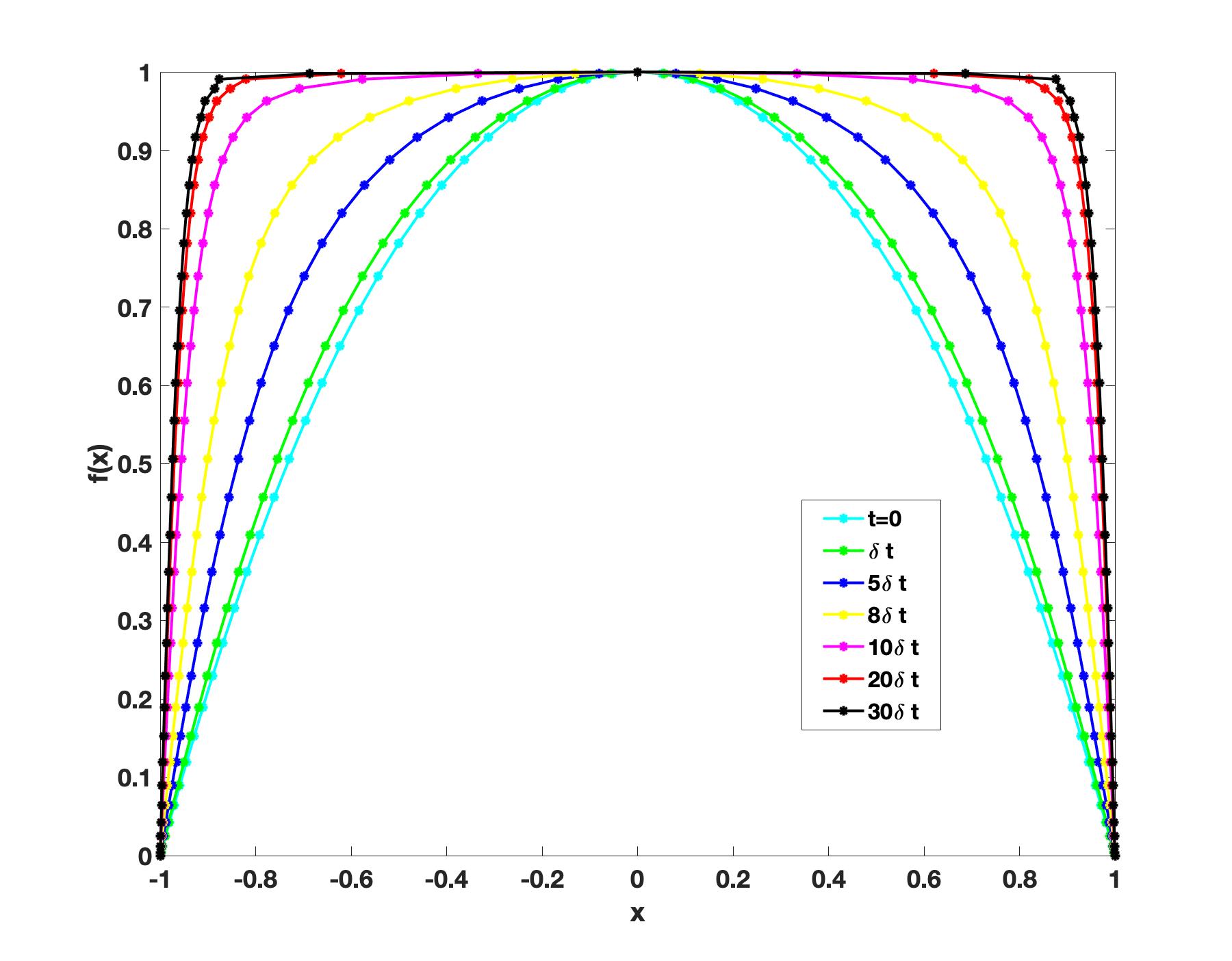}}
\subfigure[Flow dynamic approach with Finite element method: $N=64$ and $\eps^2=0.001$.]{
\includegraphics[width=0.45\textwidth,clip==]{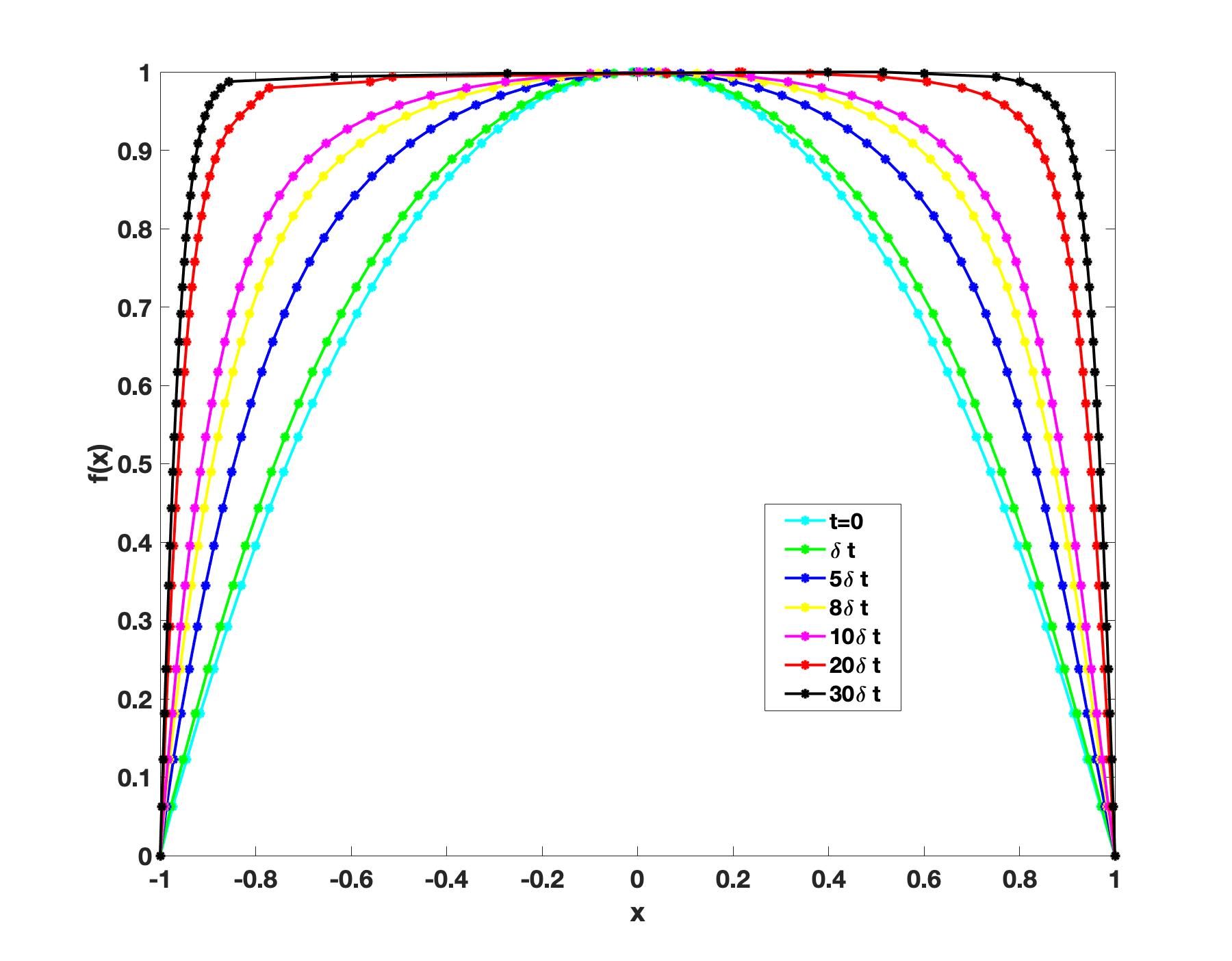}}
\subfigure[Numerical method in Eulerian coordinate $N=256$ and $\eps^2=0.001$.]{
\includegraphics[width=0.45\textwidth,clip==]{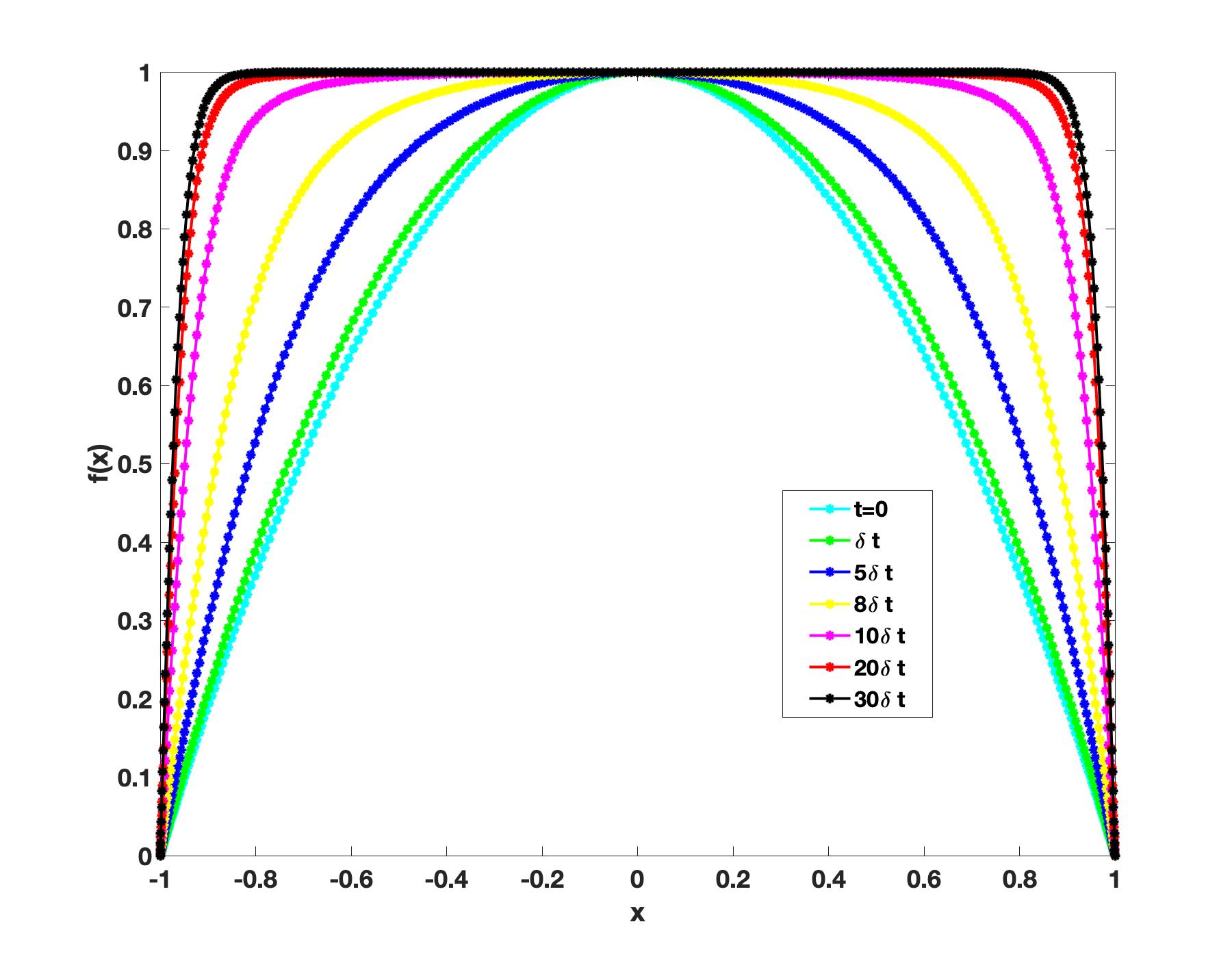}}
\subfigure[Mesh distribution for Flow dynamic approach.]{
\includegraphics[width=0.45\textwidth,clip==]{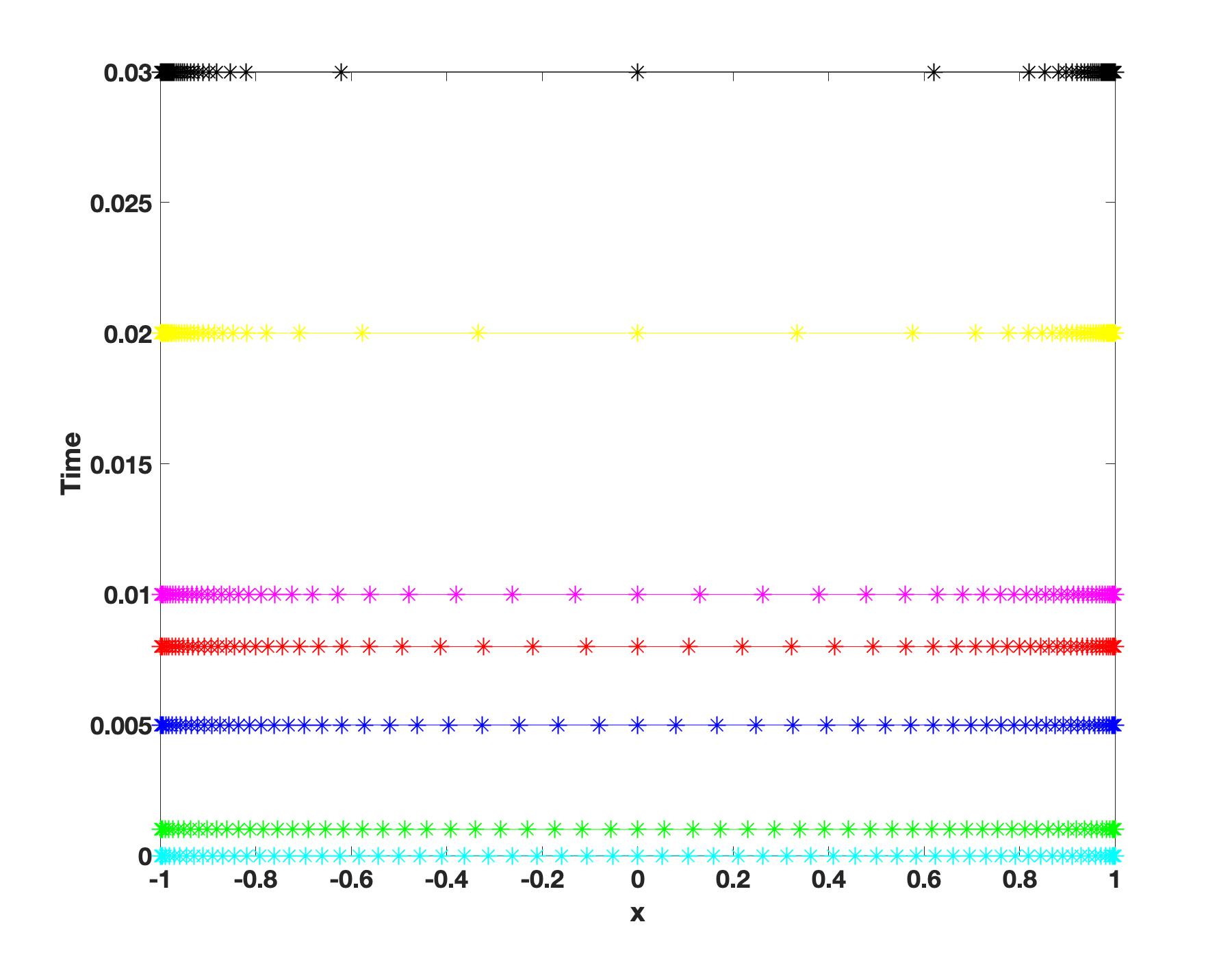}}
\caption{Capturing interface by  Lagrangian numerical method based on Variational Energetic Approach.}\label{interface_0}
\end{figure}

Next we examine what happens as we decrease the interfacial width.  It is expected that the solution,  in the limit of $\eps$ going to zero, behaves like a piecewise constant function with values $\pm 1$ in much of two bulk regions which are separated by a diffusive interfacial layer of thickness $O(\eps)$. 

We first use the Lagrangian scheme with the finite-element method in space.  In Fig.\,\ref{fem_steady}, we plot the results  for  $\eps^2=10^{-3}$ to $\eps^2=10^{-6}$ with $N=8,16,32,64$ points and initial condition is $f_0(x)=x$.  We observe that  almost all  points are concentrated at the interfacial region. The interface location is well captured even with only 8 points, although the value is a bit off due to the limited accuracy of finite-elements. We obtain similar results as we decrease $\eps$ further. This example shows the amazing ability of the flow dynamic approach in capturing thin interfaces of Allen-Cahn equations: the number of points needed to resolve the interface is independent of interfacial width!

\begin{figure}
\centering
\subfigure[ $\eps^2=10^{-3}$  and $N=8$.]{
\includegraphics[width=0.23\textwidth,clip==]{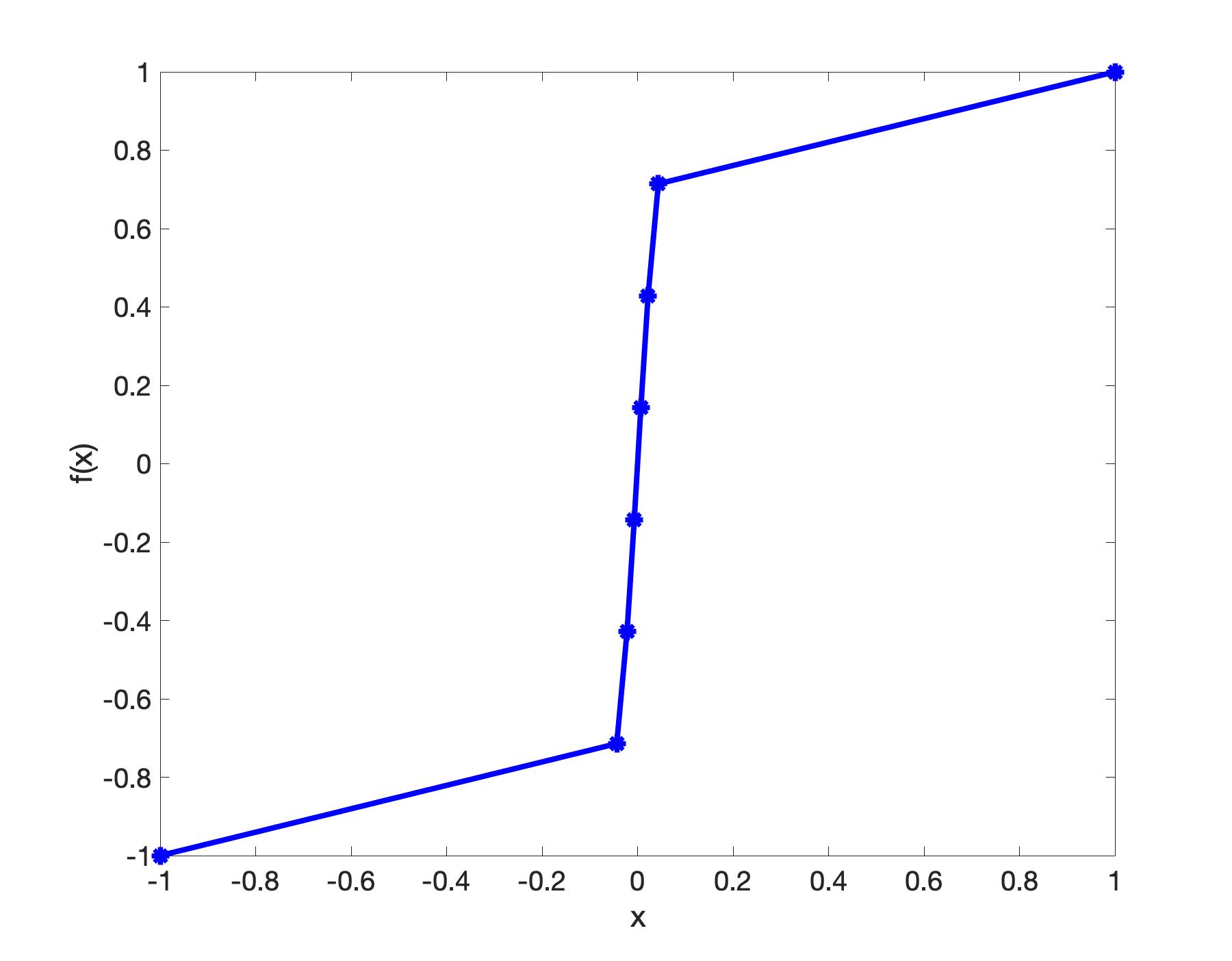}}
\subfigure[$\eps^2=10^{-3}$ and $N=16$ .]{
\includegraphics[width=0.23\textwidth,clip==]{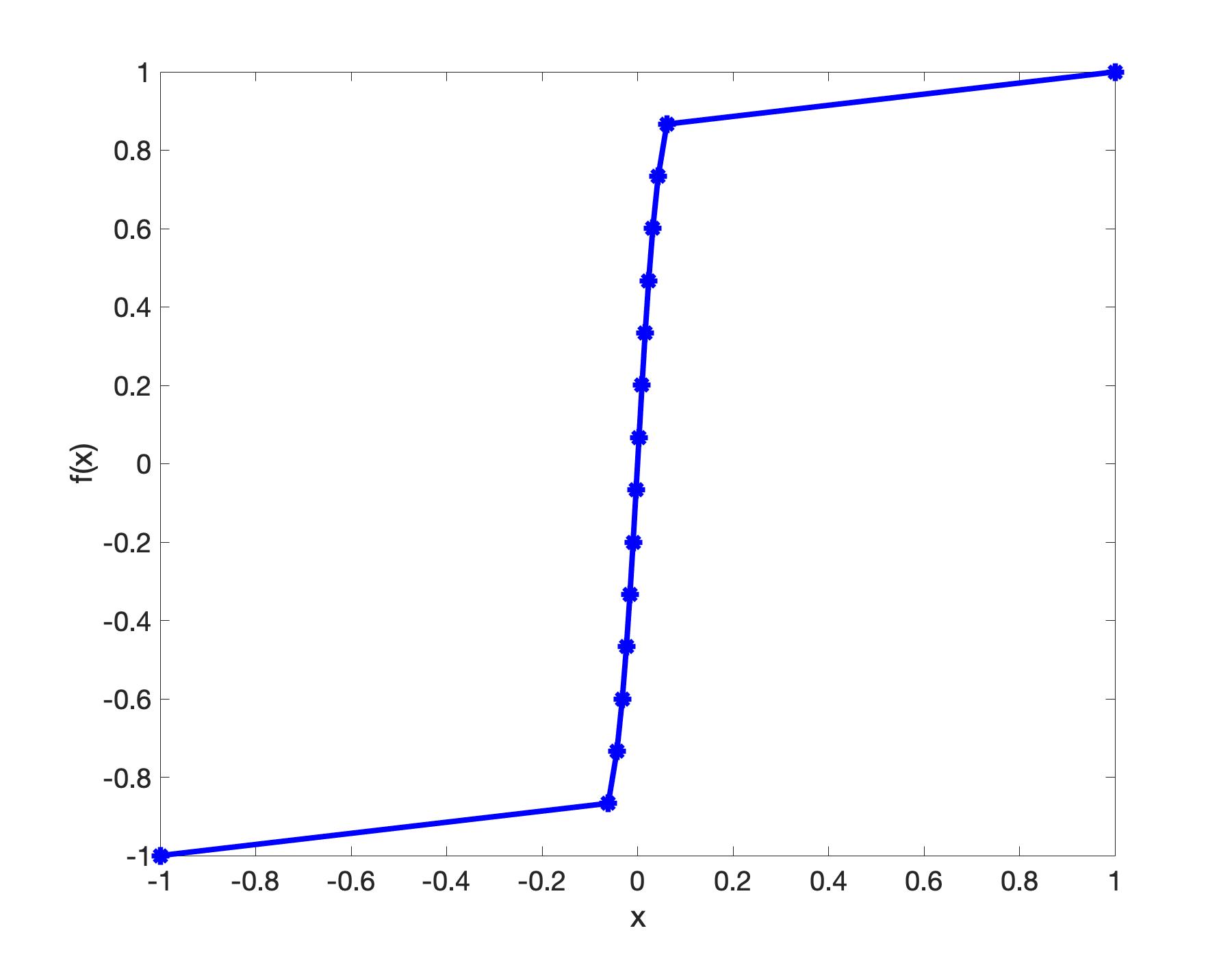}}
\subfigure[$\eps^2=10^{-3}$ and $N=32$]{
\includegraphics[width=0.23\textwidth,clip==]{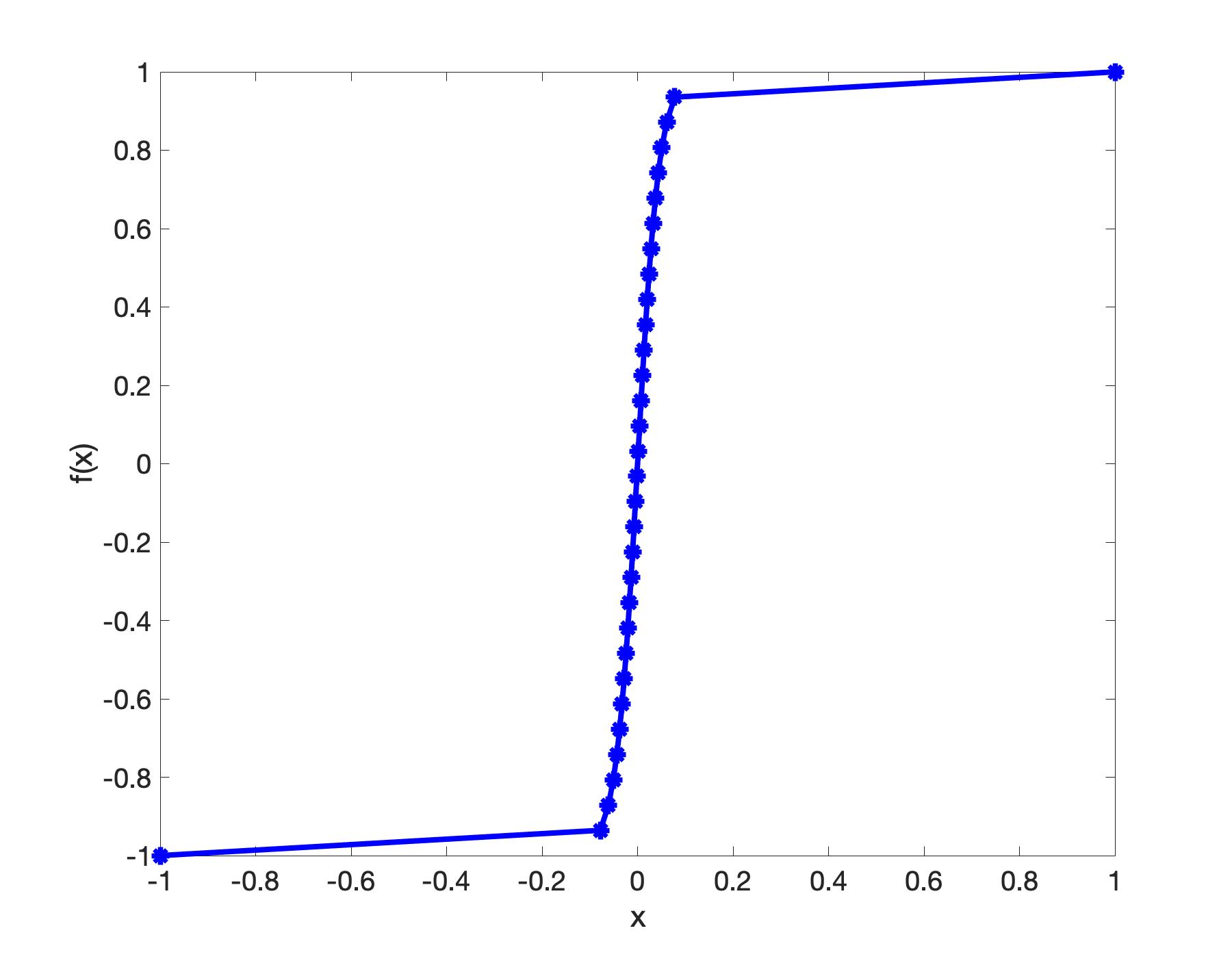}}
\subfigure[$\eps^2=10^{-3}$ and $N=64$]{
\includegraphics[width=0.23\textwidth,clip==]{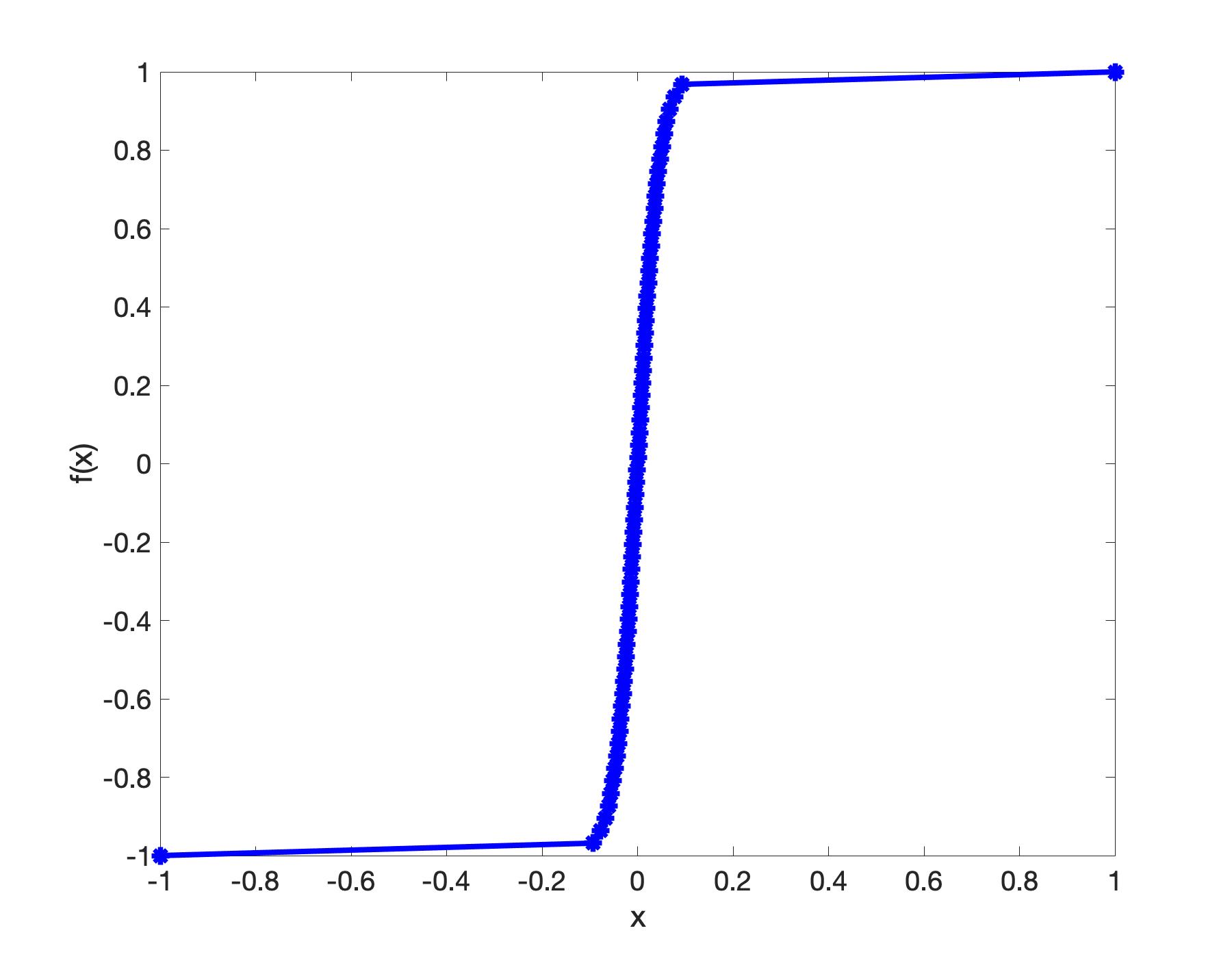}}
\subfigure[ $\eps^2=10^{-4}$  and $N=8$.]{
\includegraphics[width=0.23\textwidth,clip==]{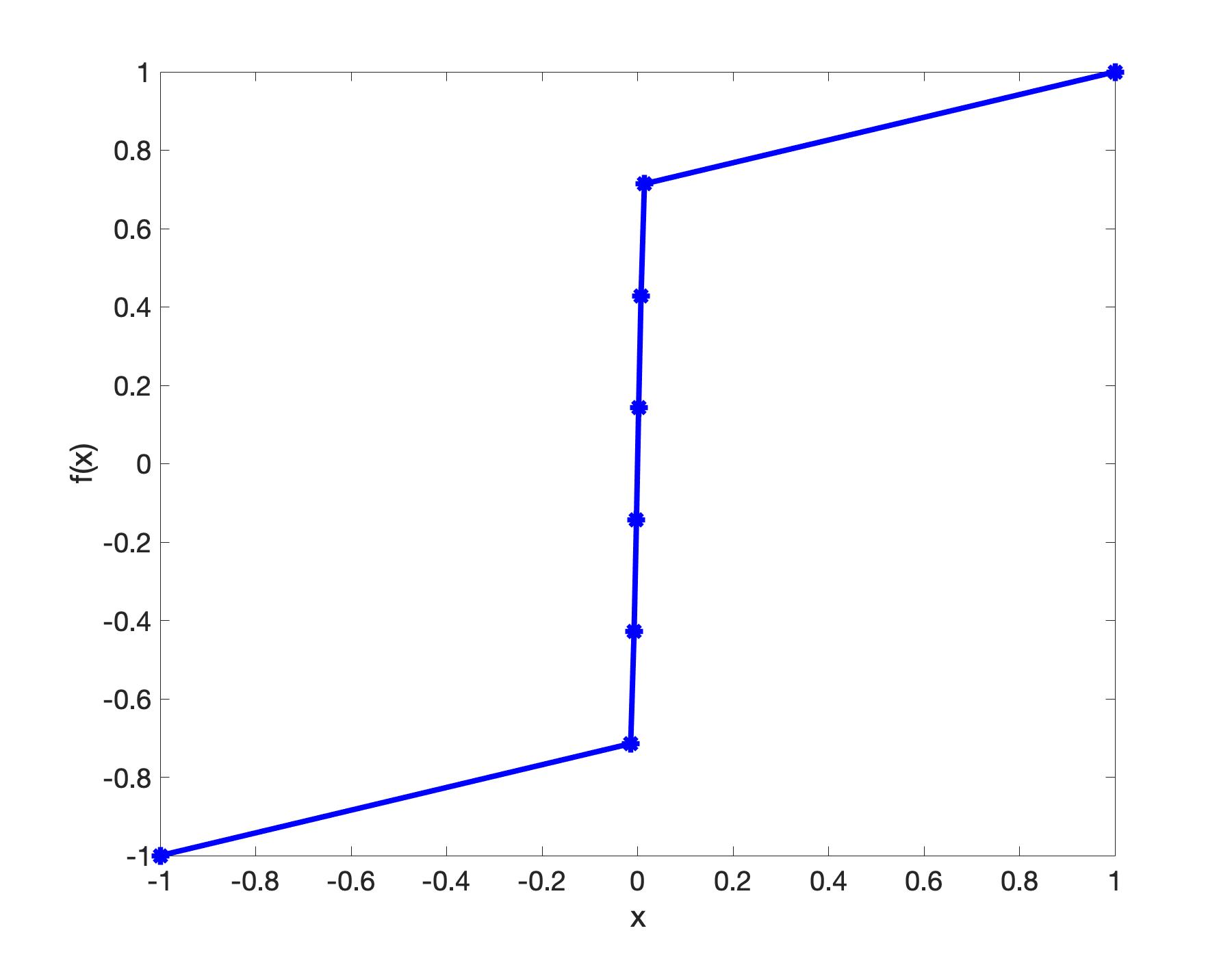}}
\subfigure[$\eps^2=10^{-4}$ and $N=16$ .]{
\includegraphics[width=0.23\textwidth,clip==]{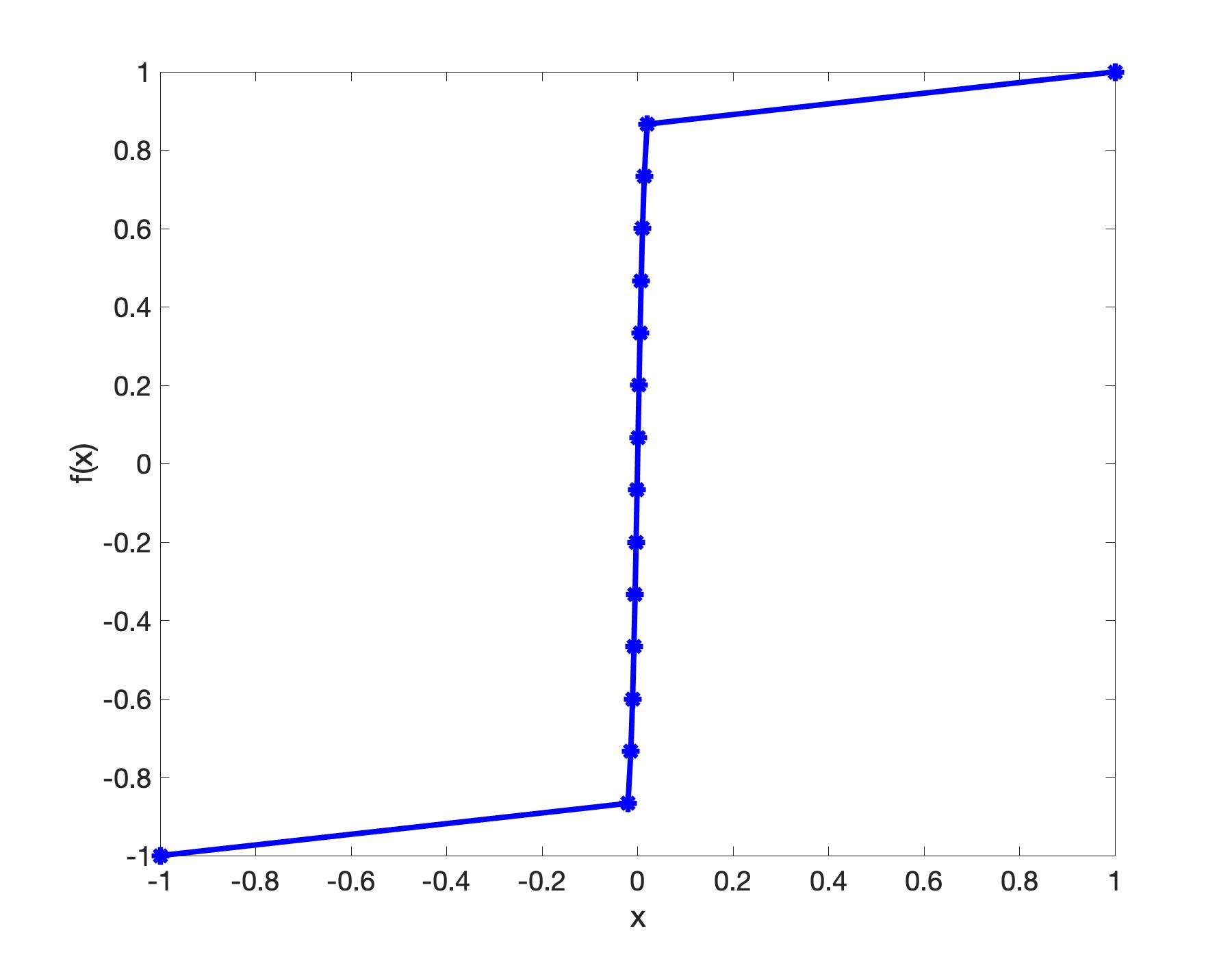}}
\subfigure[$\eps^2=10^{-4}$ and $N=32$]{
\includegraphics[width=0.23\textwidth,clip==]{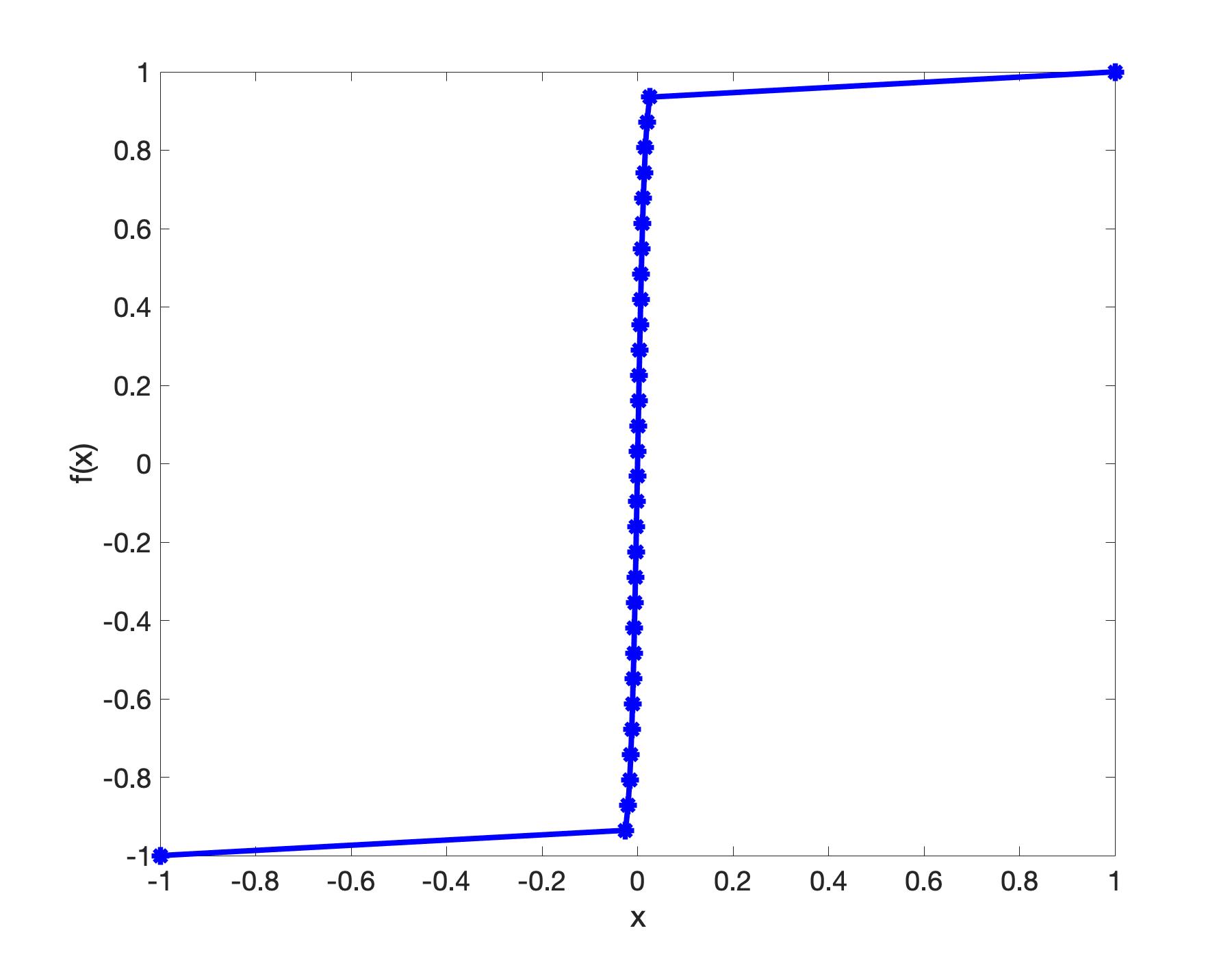}}
\subfigure[$\eps^2=10^{-4}$ and $N=64$]{
\includegraphics[width=0.23\textwidth,clip==]{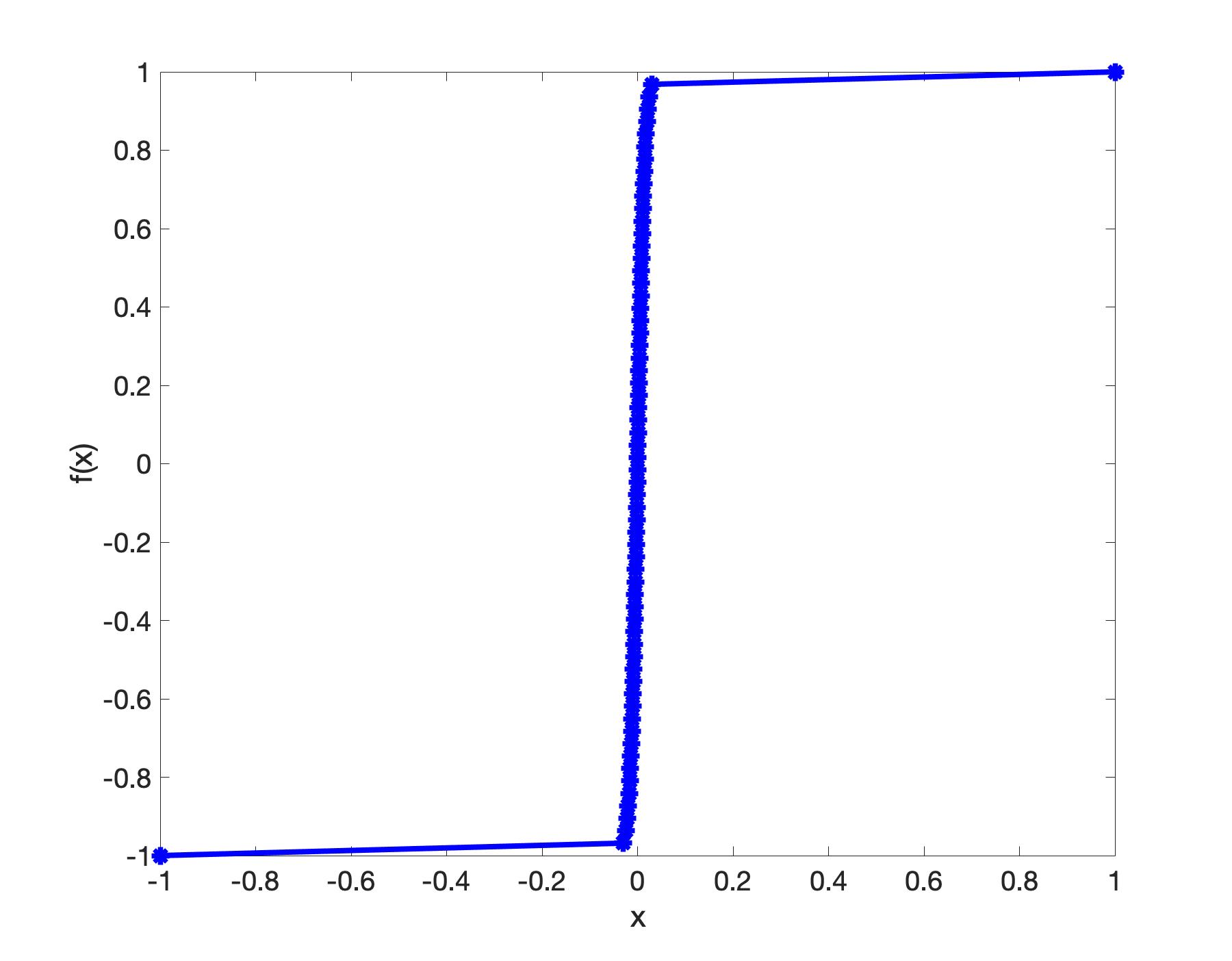}}
\subfigure[ $\eps^2=10^{-5}$  and $N=8$.]{
\includegraphics[width=0.23\textwidth,clip==]{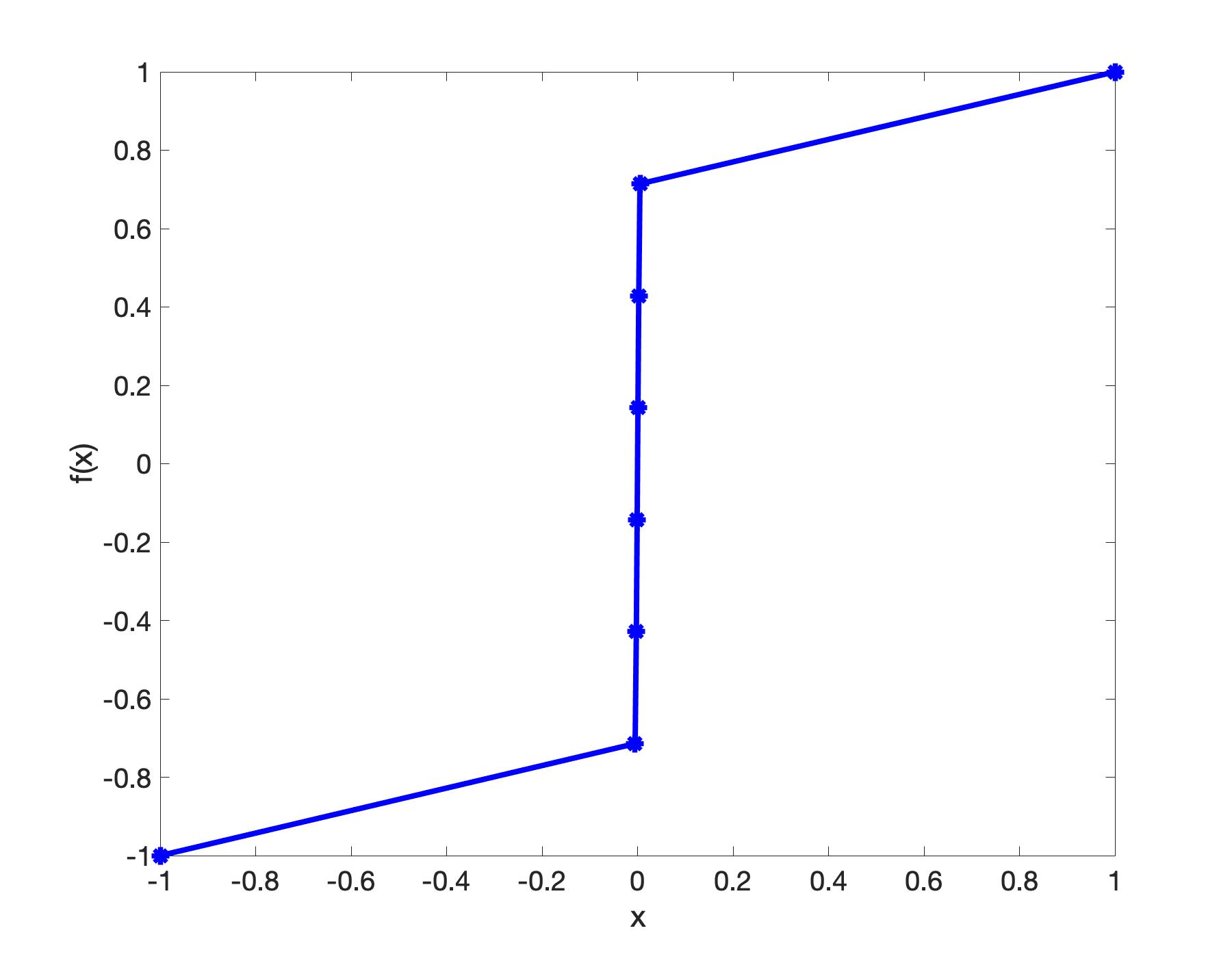}}
\subfigure[$\eps^2=10^{-5}$ and $N=16$ .]{
\includegraphics[width=0.23\textwidth,clip==]{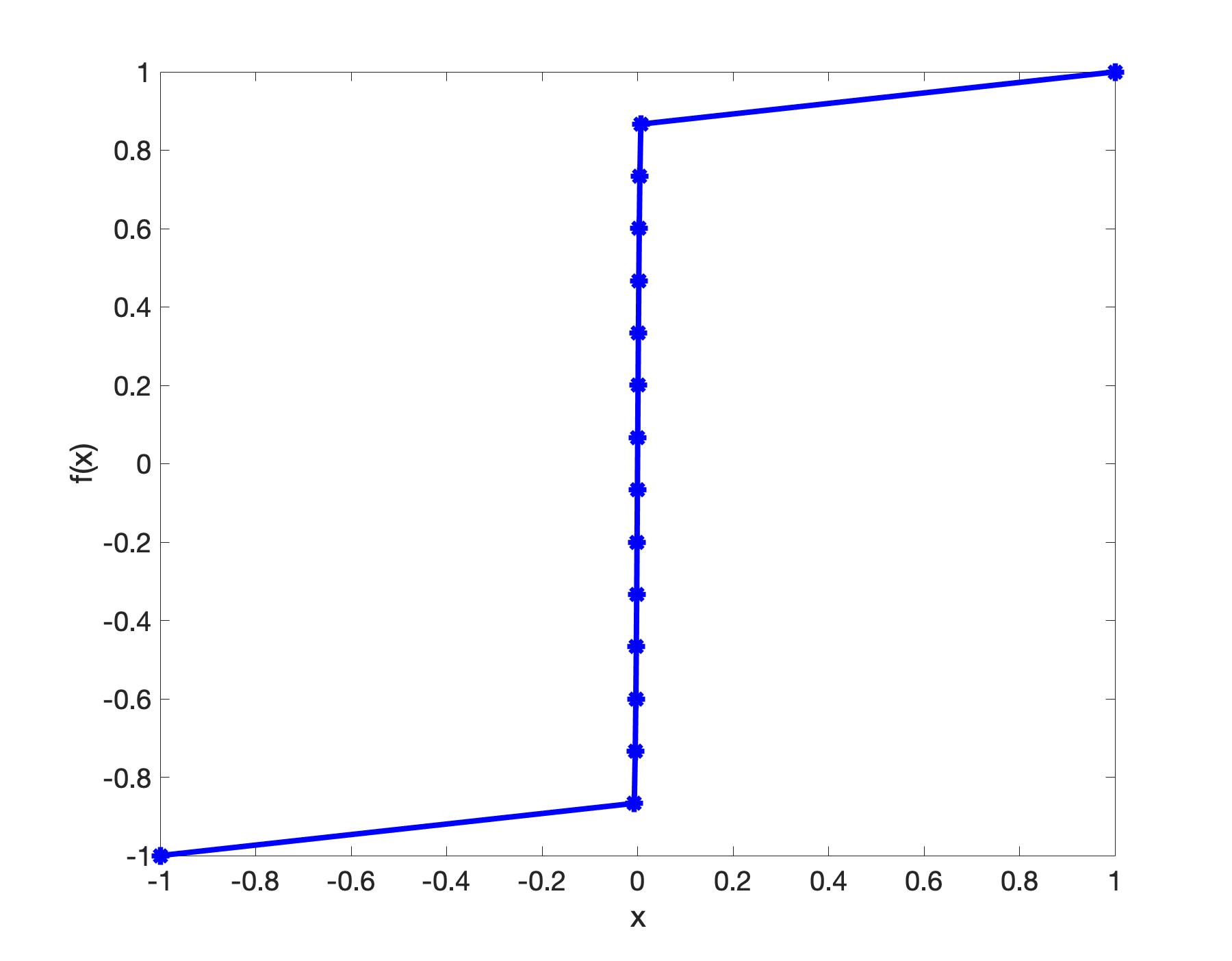}}
\subfigure[$\eps^2=10^{-5}$ and $N=32$]{
\includegraphics[width=0.23\textwidth,clip==]{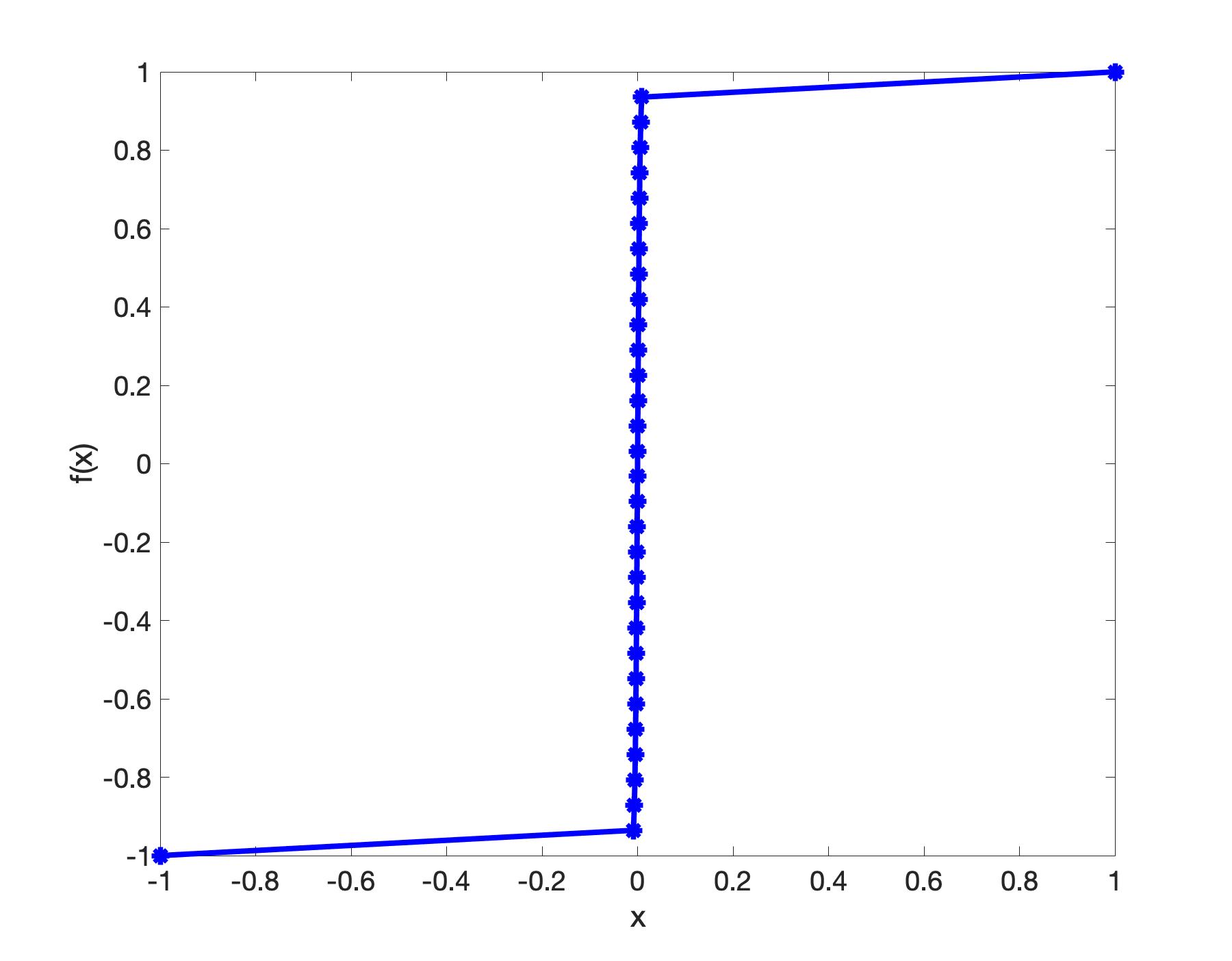}}
\subfigure[$\eps^2=10^{-5}$ and $N=64$]{
\includegraphics[width=0.23\textwidth,clip==]{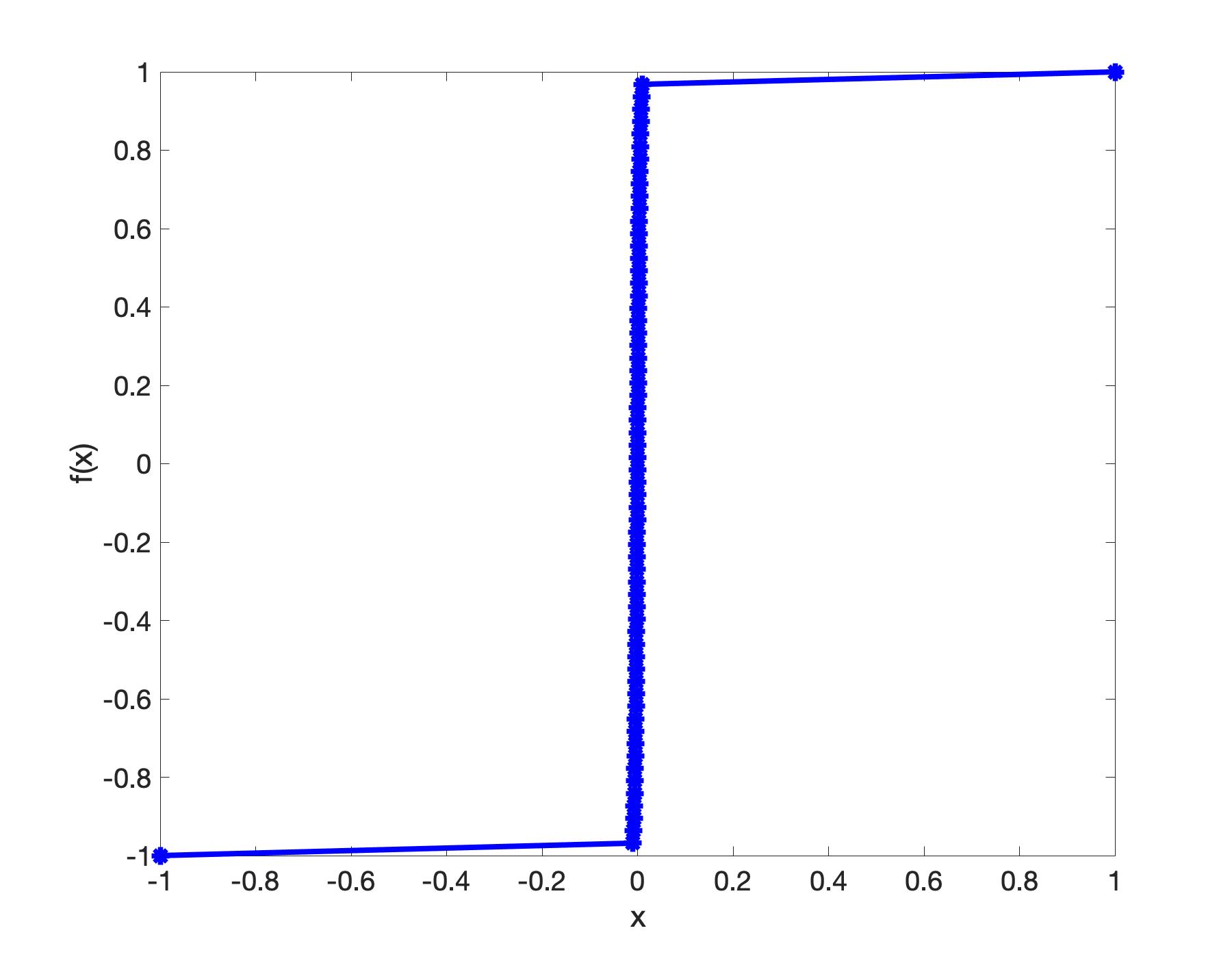}}
\subfigure[ $\eps^2=10^{-6}$  and $N=8$.]{
\includegraphics[width=0.23\textwidth,clip==]{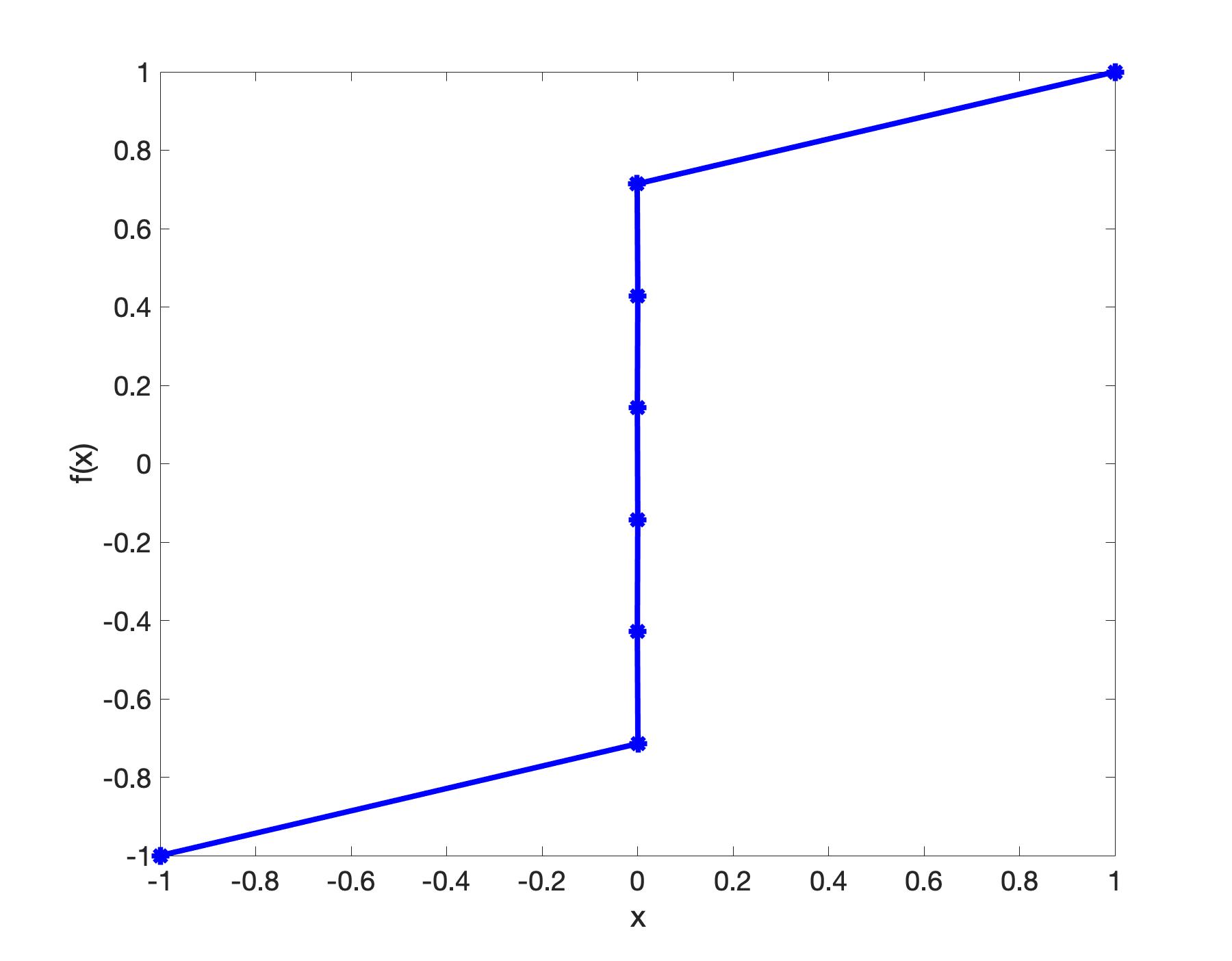}}
\subfigure[$\eps^2=10^{-6}$ and $N=16$ .]{
\includegraphics[width=0.23\textwidth,clip==]{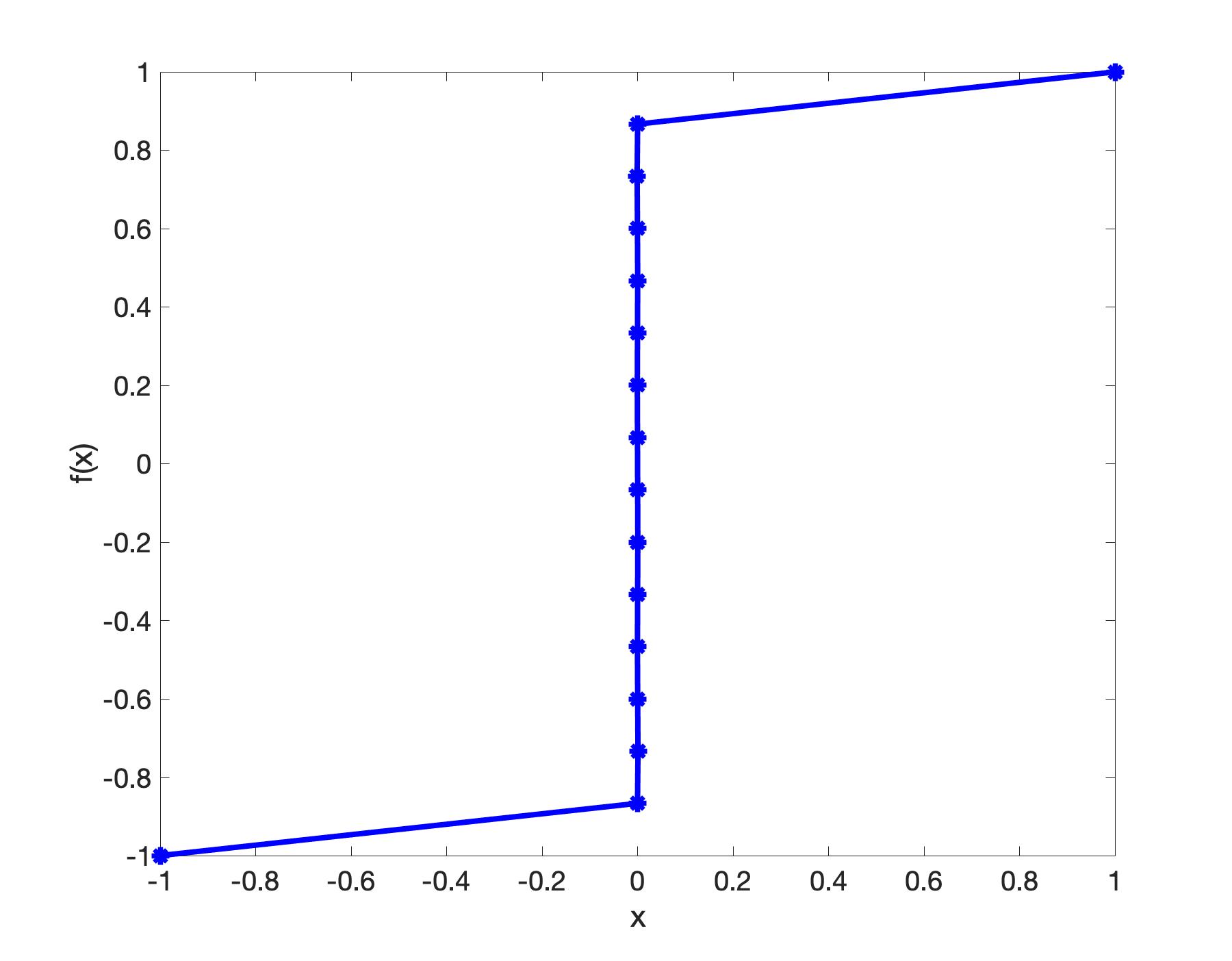}}
\subfigure[$\eps^2=10^{-6}$ and $N=32$]{
\includegraphics[width=0.23\textwidth,clip==]{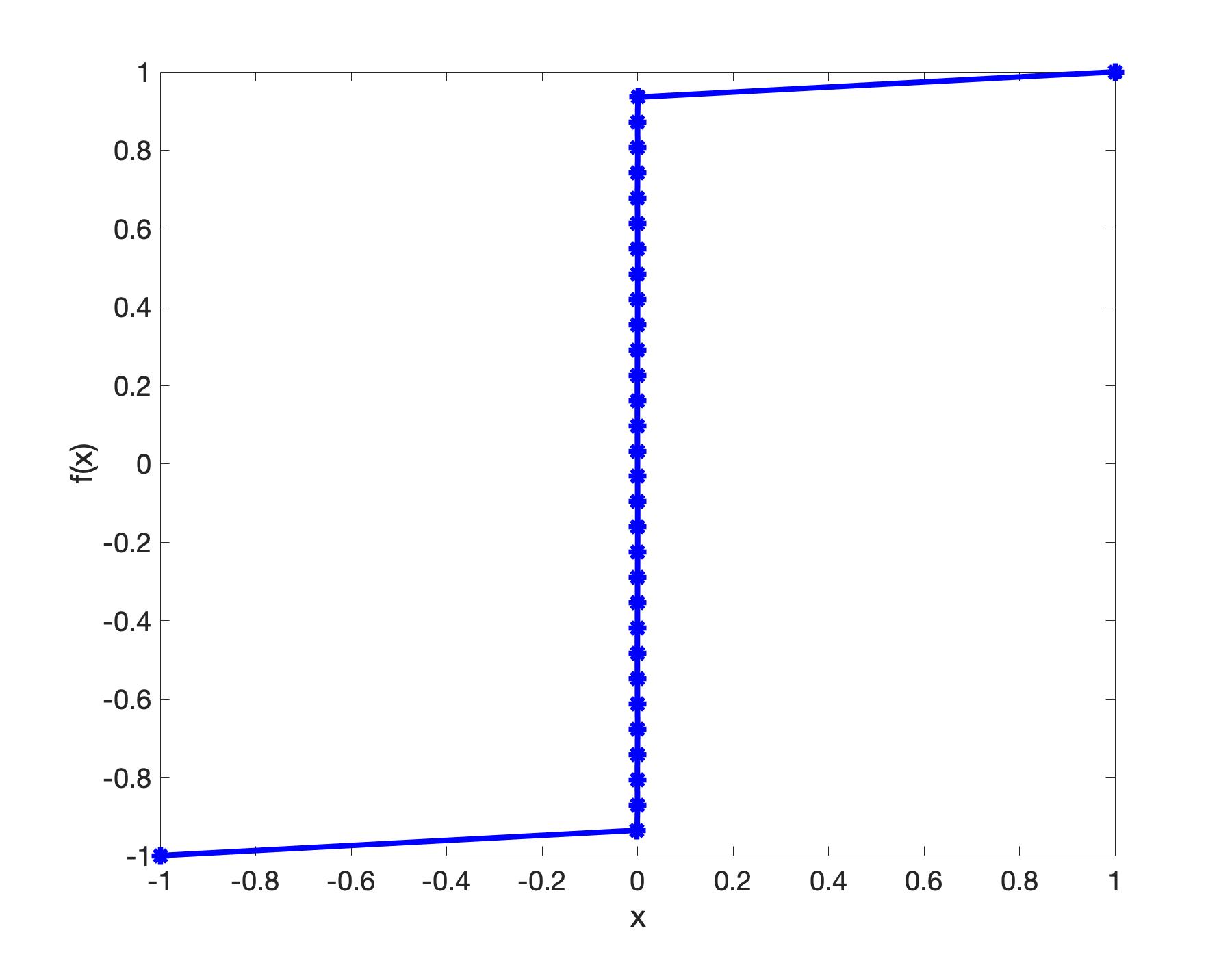}}
\subfigure[$\eps^2=10^{-6}$ and $N=64$]{
\includegraphics[width=0.23\textwidth,clip==]{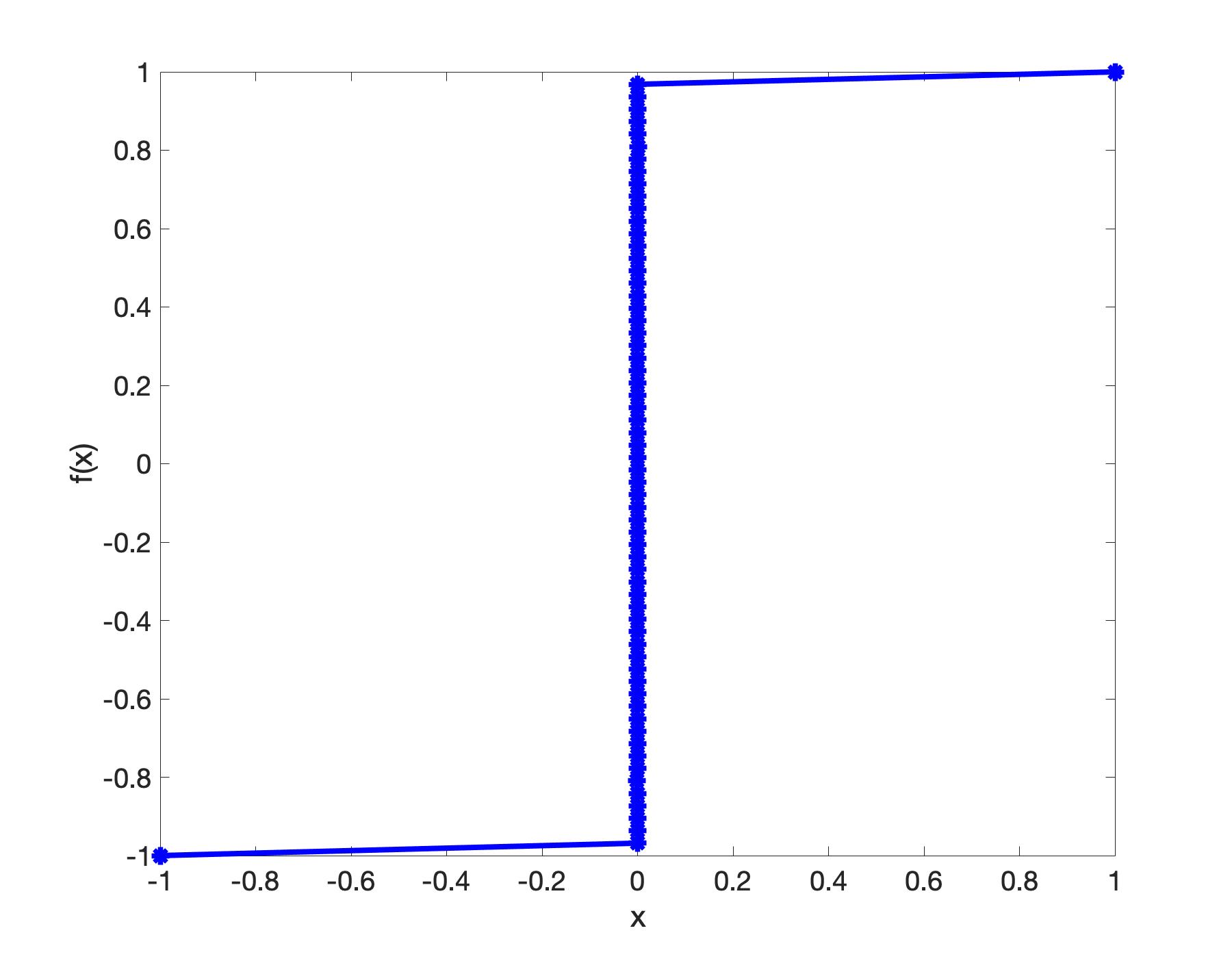}}
\caption{Approximate steady states of Allen-Cahn equation by using the Lagrangian method with  finite-element in space.}\label{fem_steady}
\end{figure}

To obtain better approximation for both location and values of the interface, it is natural to consider the Lagrangian scheme with spectral method in space. 
In Fig.\,\ref{spe1_steady} and  Fig.\,\ref{spe2_steady}, we plot the results for $\eps^2=10^{-3}$ and $\eps^2=10^{-5}$ with $N=8,16,32,64$, respectively.   We first look at the first and third column of the two figures. we observe that while most of the points are still located in the interfacial region, but the approximate solutions exhibit oscillations except at the finest resolution with $N=64$. This is a common phenomena with under-resolved spectral methods. Usually this can be fixed with a suitable filter to post-process the oscillatory approximate solutions \cite{vandeven1991family,hesthaven2008filtering}. 

Hence, in order to remove the oscillation, we use    an exponential  filter for  post-processing. More precisely, given approximate solution  $u_N=\sum\limits_{n=0}^N\hat{u}_nL_n(x)$ with $L_n(x)$ being the Legendre polynomial of degree $n$, we set the filtered solution to be
\begin{equation}
F_Nu_N=\sum\limits_{n=0}^N\sigma(\frac{n}{N})\hat{u}_nL_n(x),
\end{equation}
where $\sigma(\frac{n}{N})=\exp(-a(\frac{n}{N}))$, and $a=-\log(\eps_M)$ where $\eps_M$ is the machine accuracy.  The filtered results are presented in the second and fourth columns of Figs.\,\ref{spe1_steady} and  Fig.\,\ref{spe2_steady}. We observe that the filtered solutions are non-oscillatory and approximate the exact solutions much better than the finite-element methods. In fact, while the values with $N=8$ are still visibly different from the exact solution, excellent approximations are obtained with $N=16$ for both cases.

\begin{figure}
\centering
\subfigure[ $\eps^2=10^{-3}$  and $N=8$ without spectral filter.]{
\includegraphics[width=0.23\textwidth,clip==]{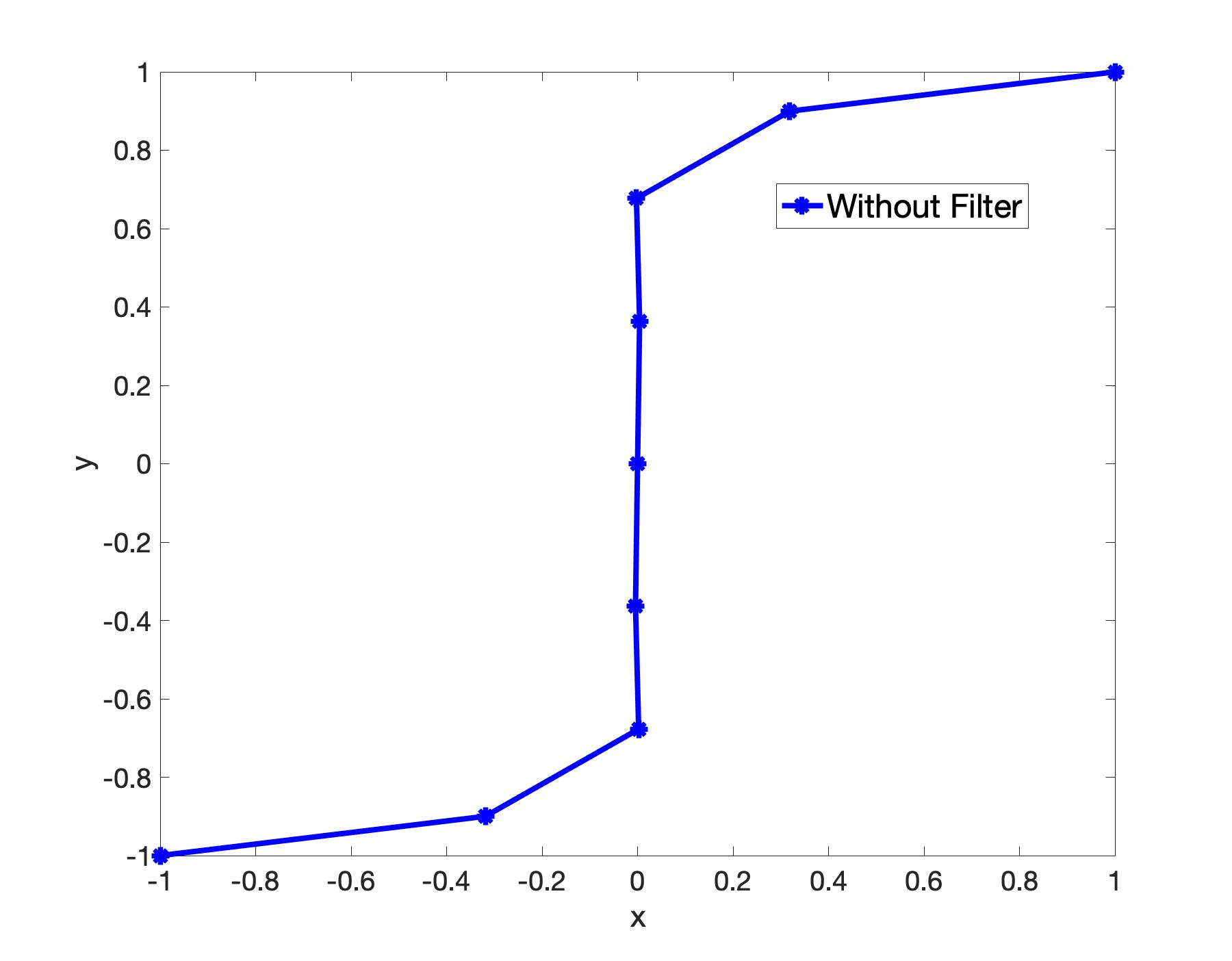}}
\subfigure[$\eps^2=10^{-3}$ and $N=8$ with spectral filter .]{
\includegraphics[width=0.23\textwidth,clip==]{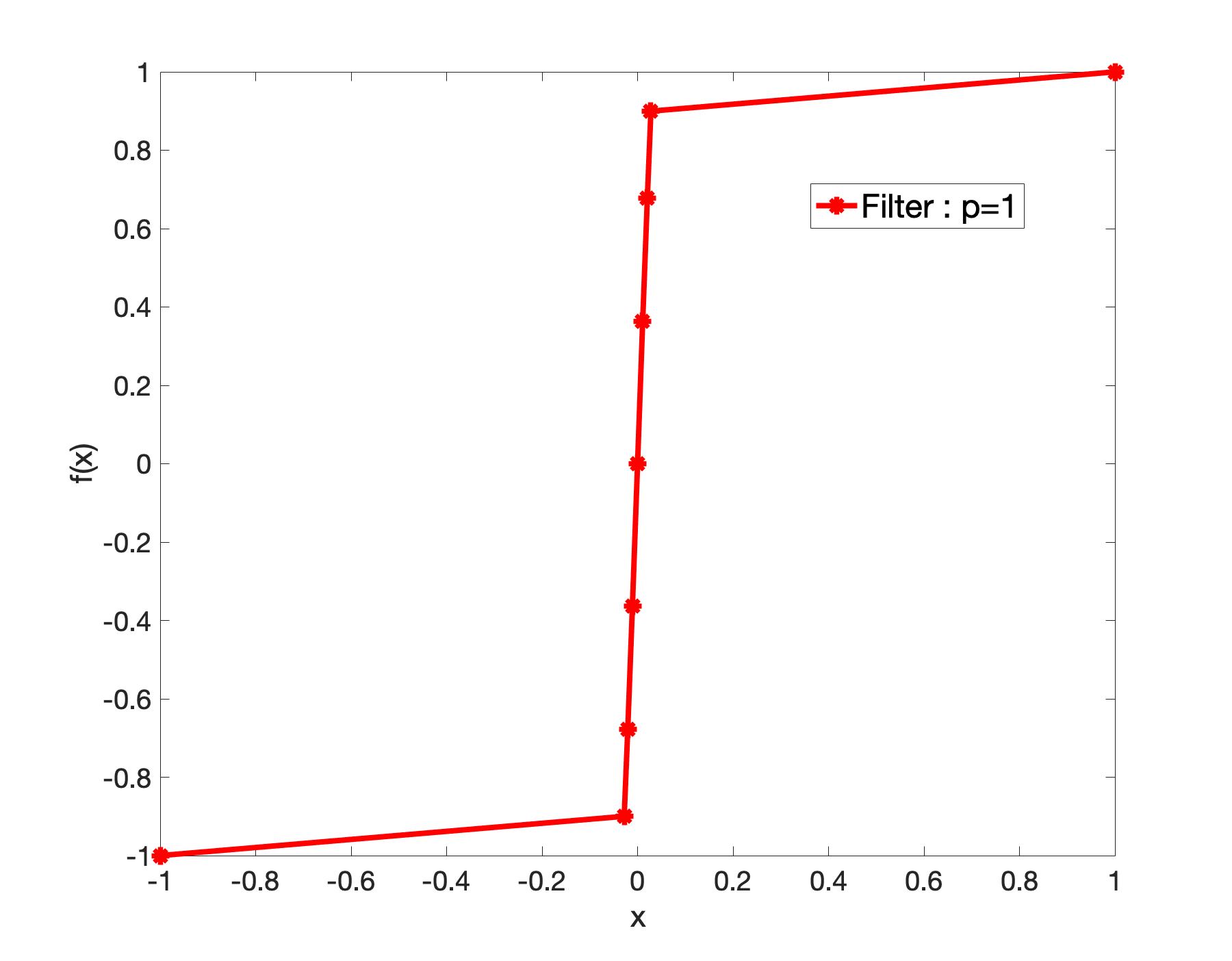}}
\subfigure[$\eps^2=10^{-3}$ and $N=16$ without spectral filter.]{
\includegraphics[width=0.23\textwidth,clip==]{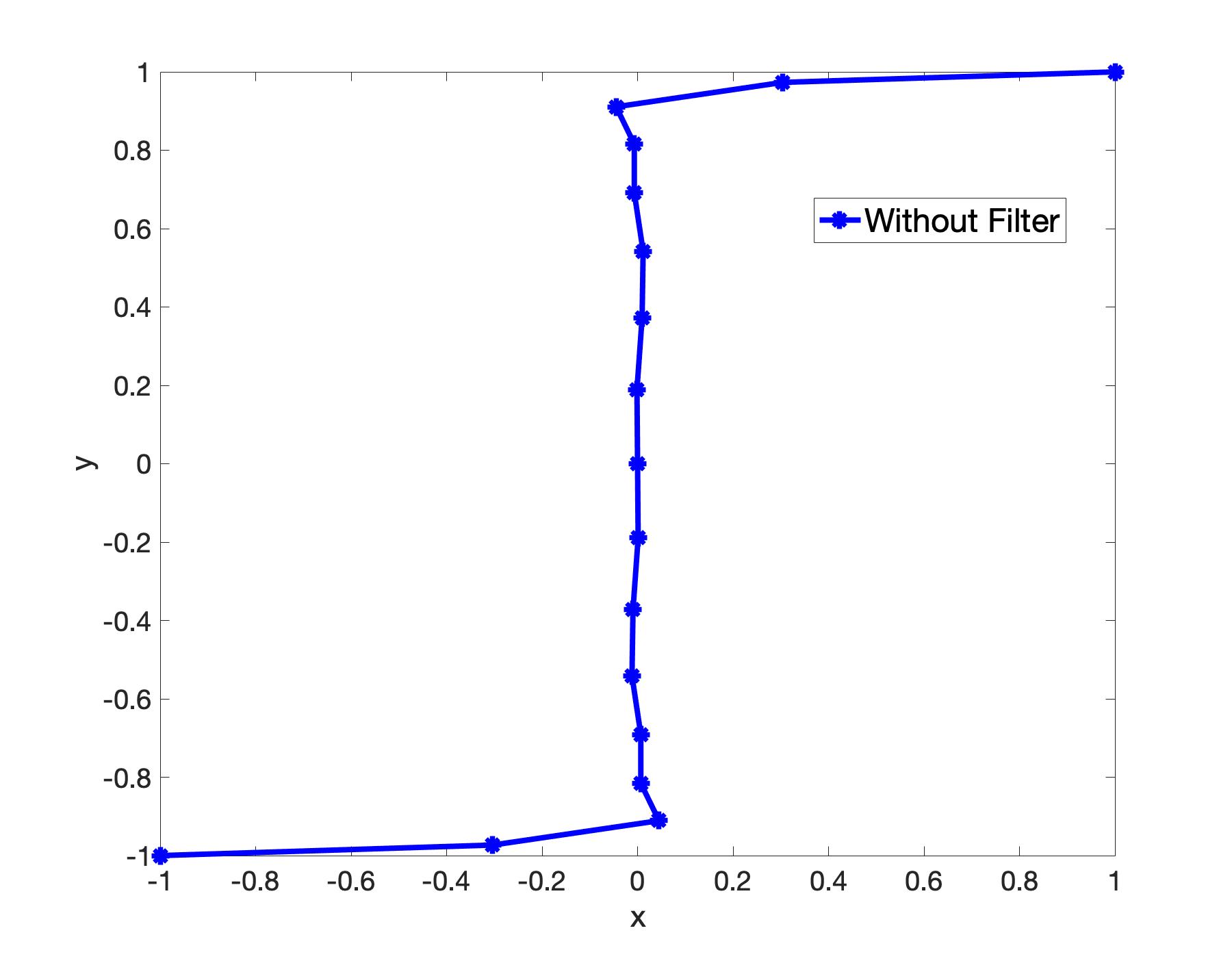}}
\subfigure[$\eps^2=10^{-3}$ and $N=16$ with spectral filter.]{
\includegraphics[width=0.23\textwidth,clip==]{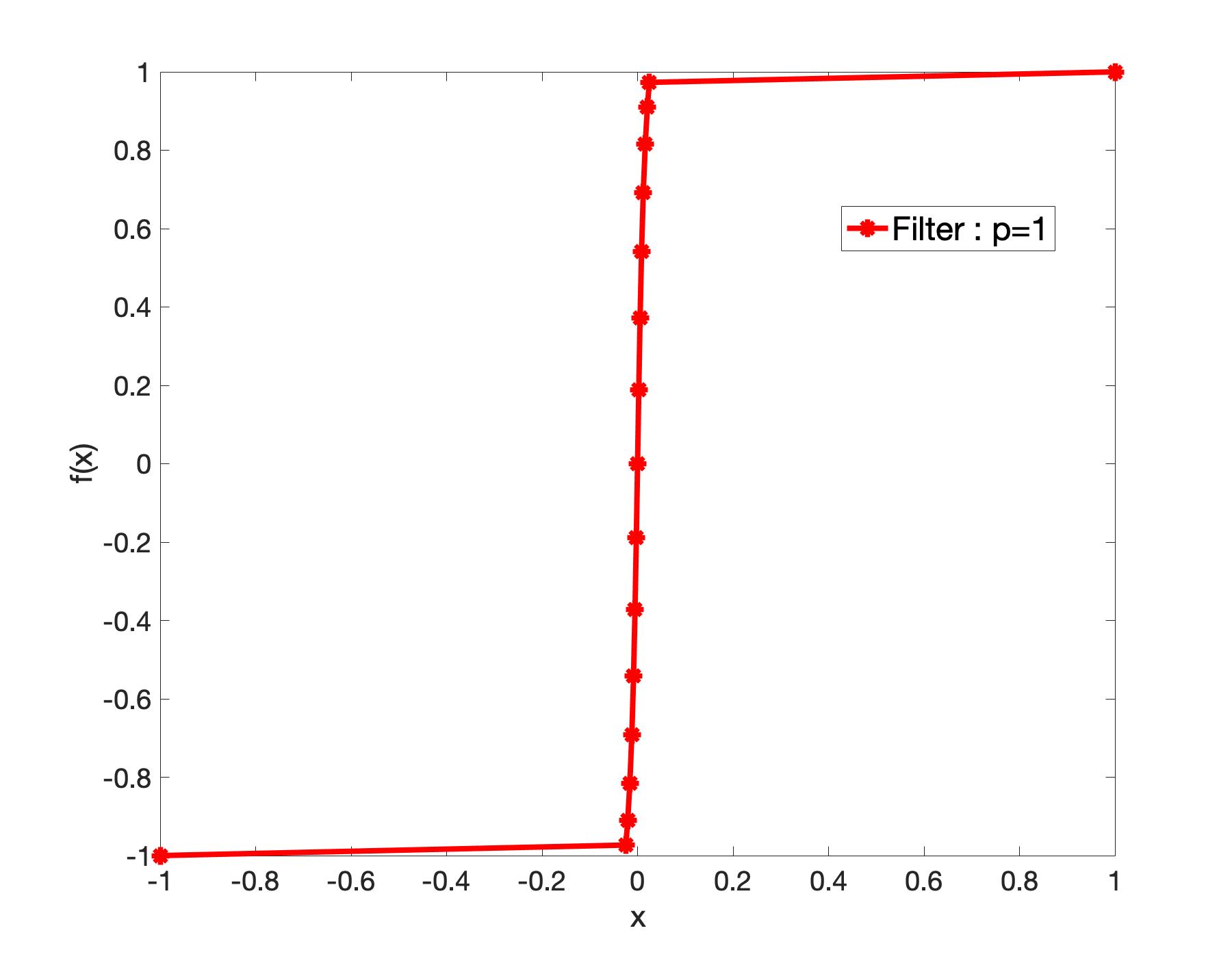}}
\subfigure[$\eps^2=10^{-3}$ and $N=32$ without spectral filter.]{
\includegraphics[width=0.23\textwidth,clip==]{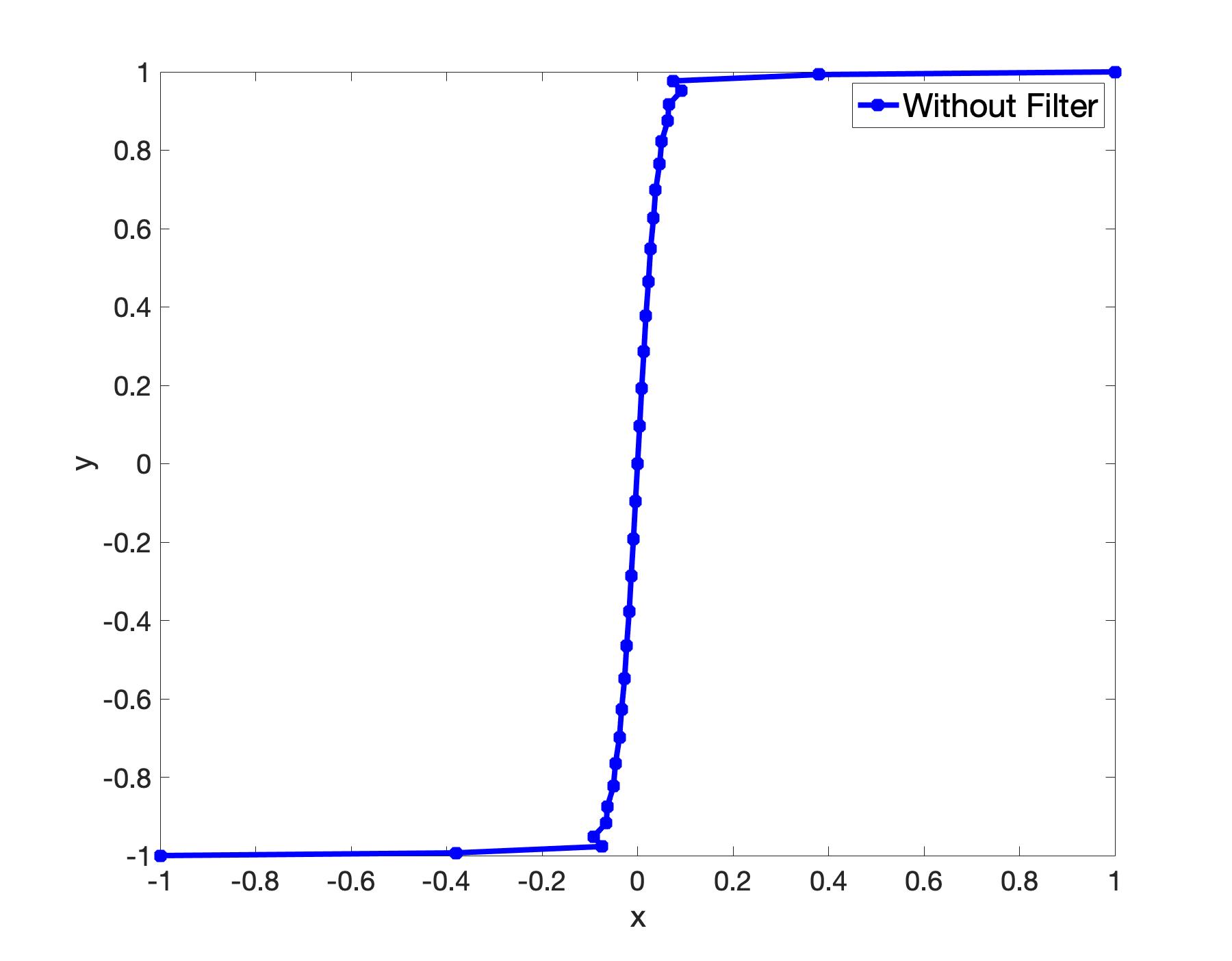}}
\subfigure[$\eps^2=10^{-3}$ and $N=32$ with spectral filter.]{
\includegraphics[width=0.23\textwidth,clip==]{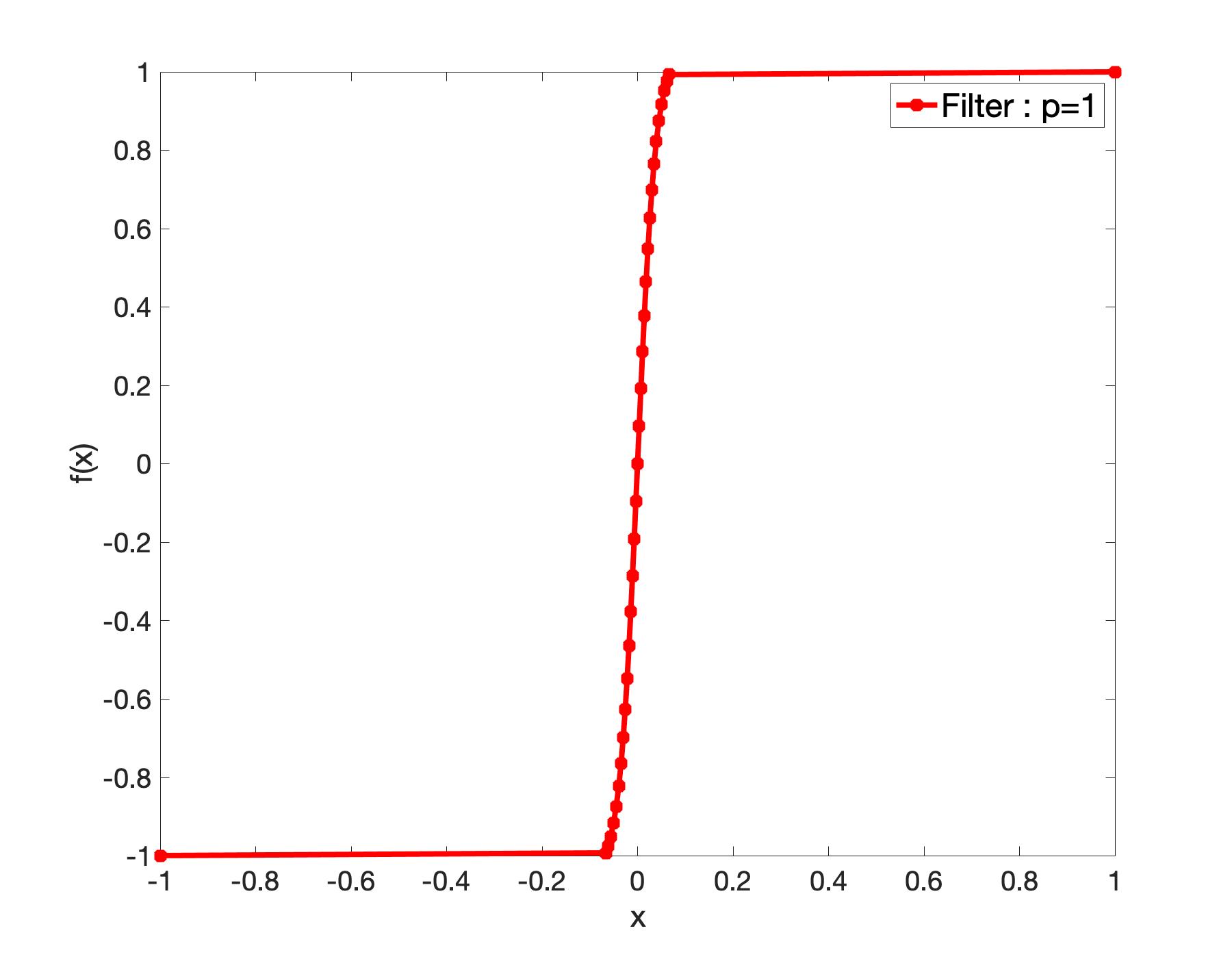}}
\subfigure[$\eps^2=10^{-3}$ and $N=64$ without spectral filter.]{
\includegraphics[width=0.23\textwidth,clip==]{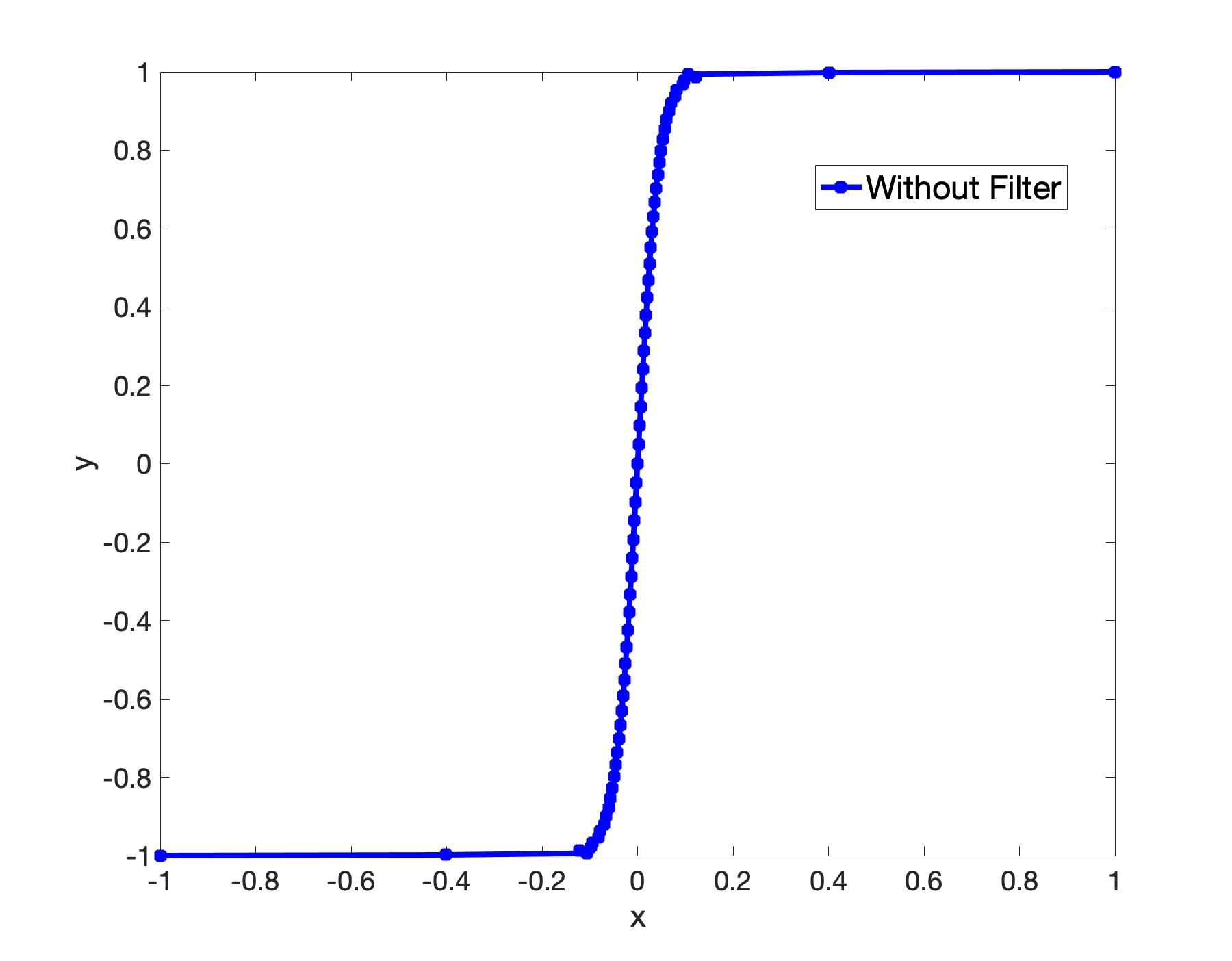}}
\subfigure[$\eps^2=10^{-3}$ and $N=64$ with spectral filter.]{
\includegraphics[width=0.23\textwidth,clip==]{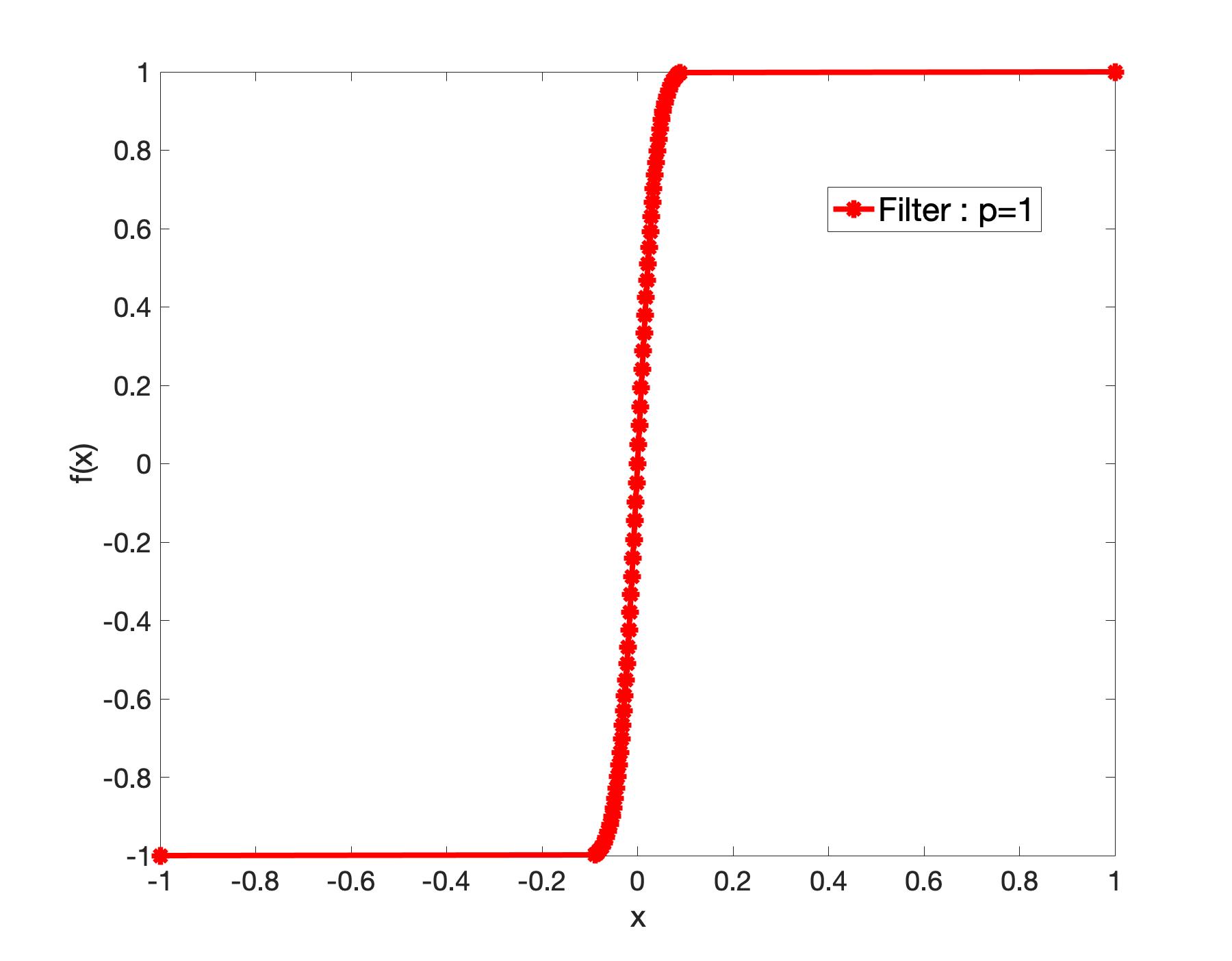}}
\caption{Approximate steady states of Allen-Cahn equation by the Lagrangian scheme  with   Legendre Spectral method in space for $\eps^2=10^{-3}$.  }\label{spe1_steady}
\end{figure}

\begin{figure}
\centering
\subfigure[ $\eps^2=10^{-5}$  and $N=8$ without spectral filter.]{
\includegraphics[width=0.23\textwidth,clip==]{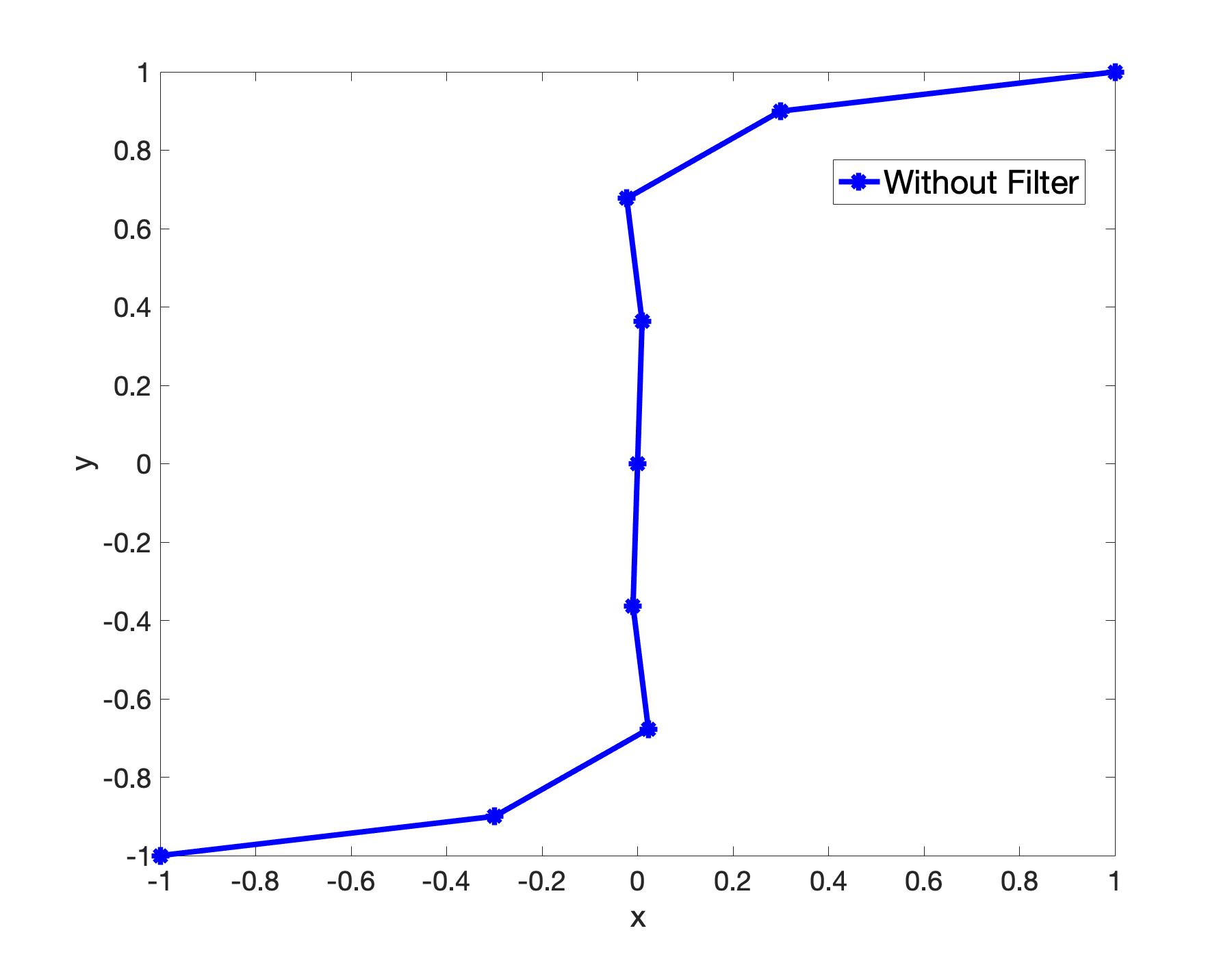}}
\subfigure[$\eps^2=10^{-5}$ and $N=8$ with spectral filter .]{
\includegraphics[width=0.23\textwidth,clip==]{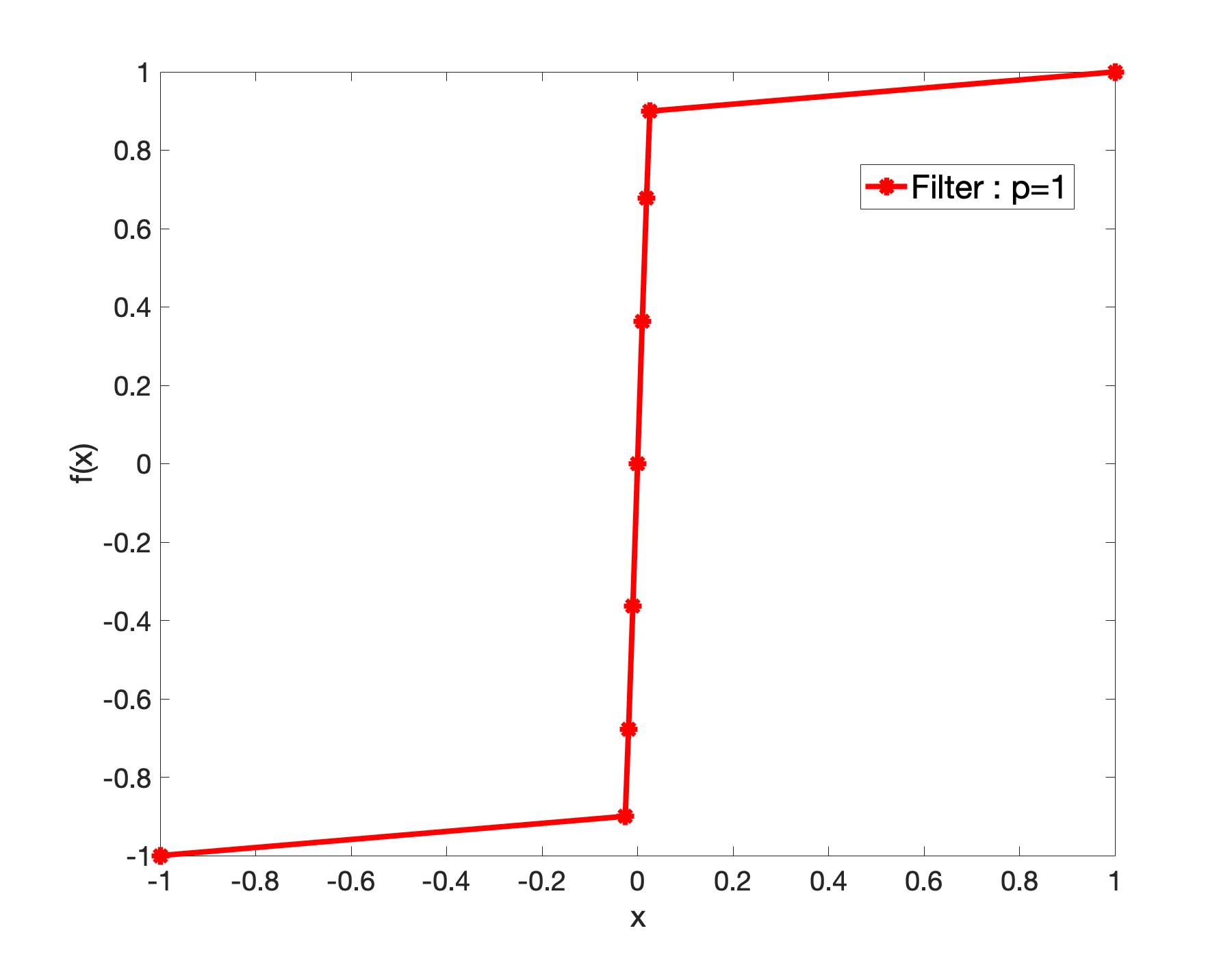}}
\subfigure[$\eps^2=10^{-5}$ and $N=16$ without spectral filter.]{
\includegraphics[width=0.23\textwidth,clip==]{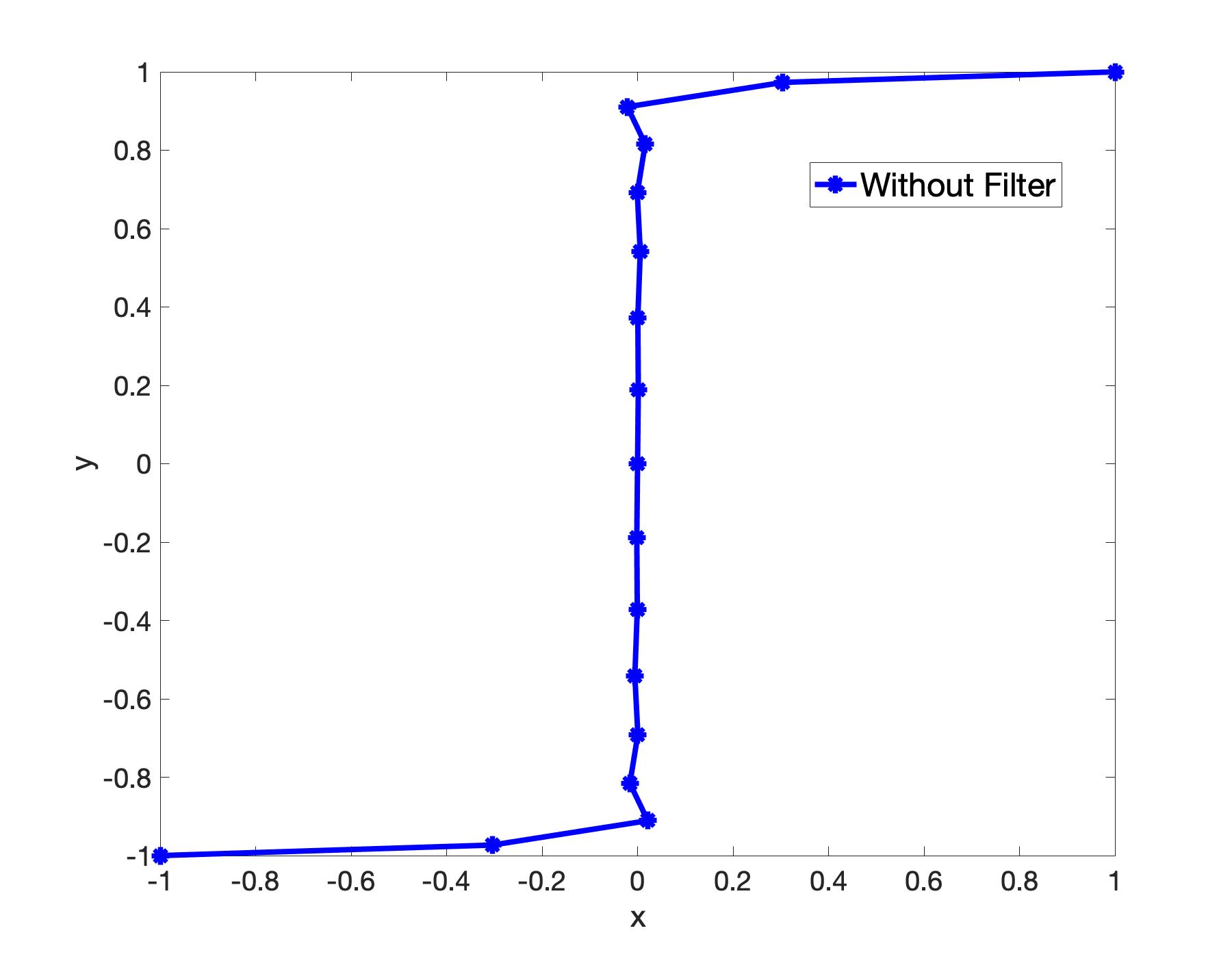}}
\subfigure[$\eps^2=10^{-5}$ and $N=16$ with spectral filter.]{
\includegraphics[width=0.23\textwidth,clip==]{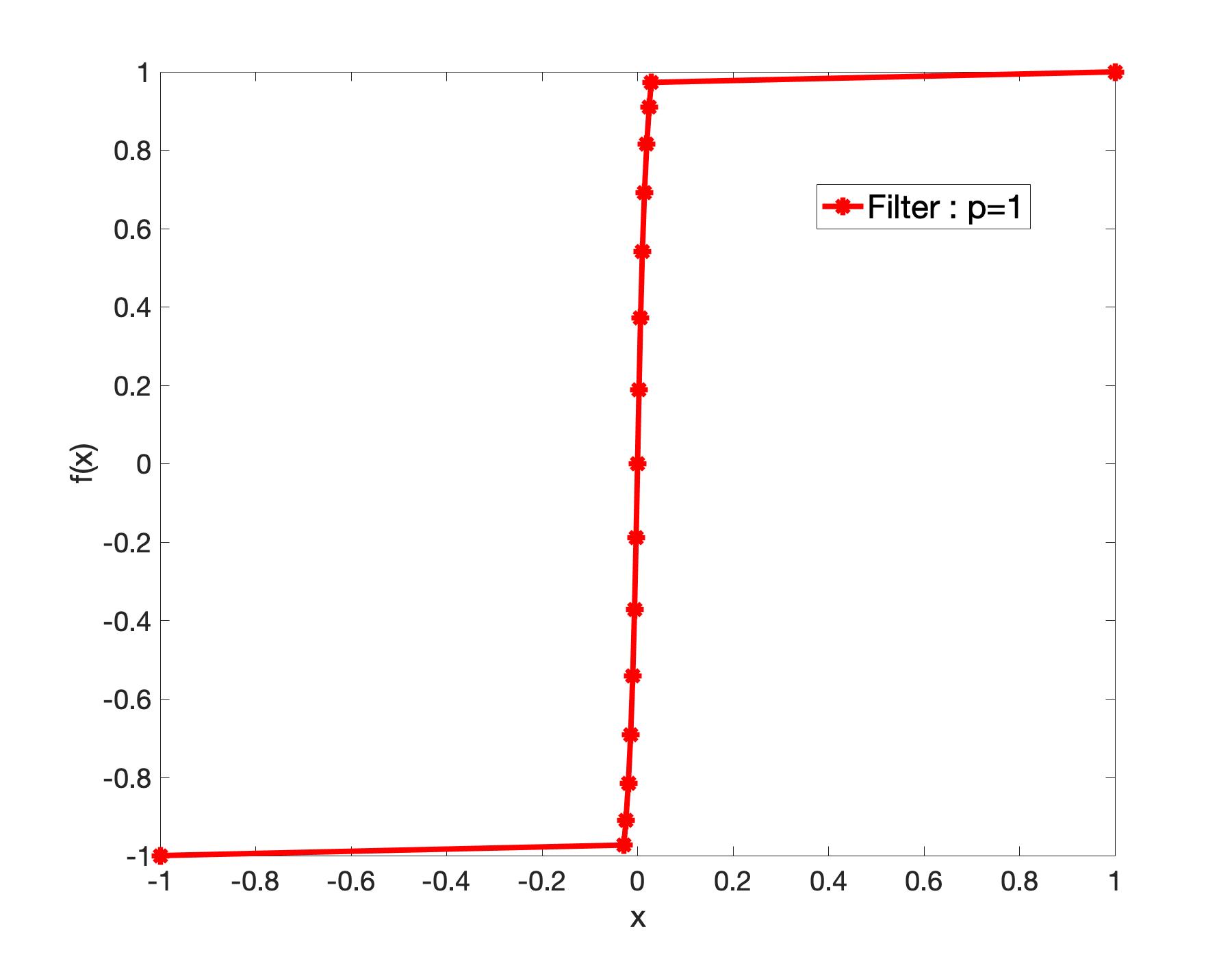}}
\subfigure[$\eps^2=10^{-5}$ and $N=32$ without spectral filter.]{
\includegraphics[width=0.23\textwidth,clip==]{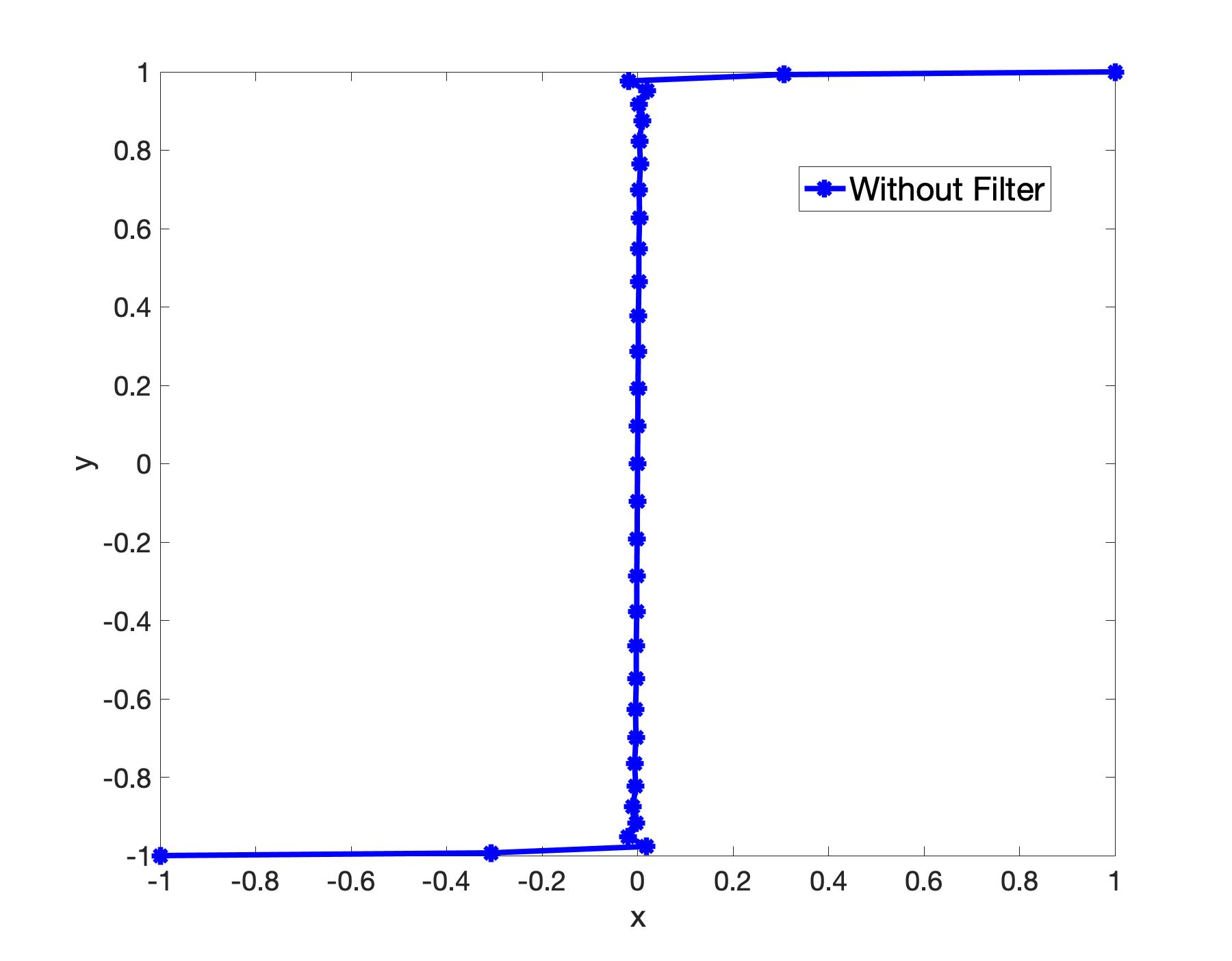}}
\subfigure[$\eps^2=10^{-5}$ and $N=32$ with spectral filter.]{
\includegraphics[width=0.23\textwidth,clip==]{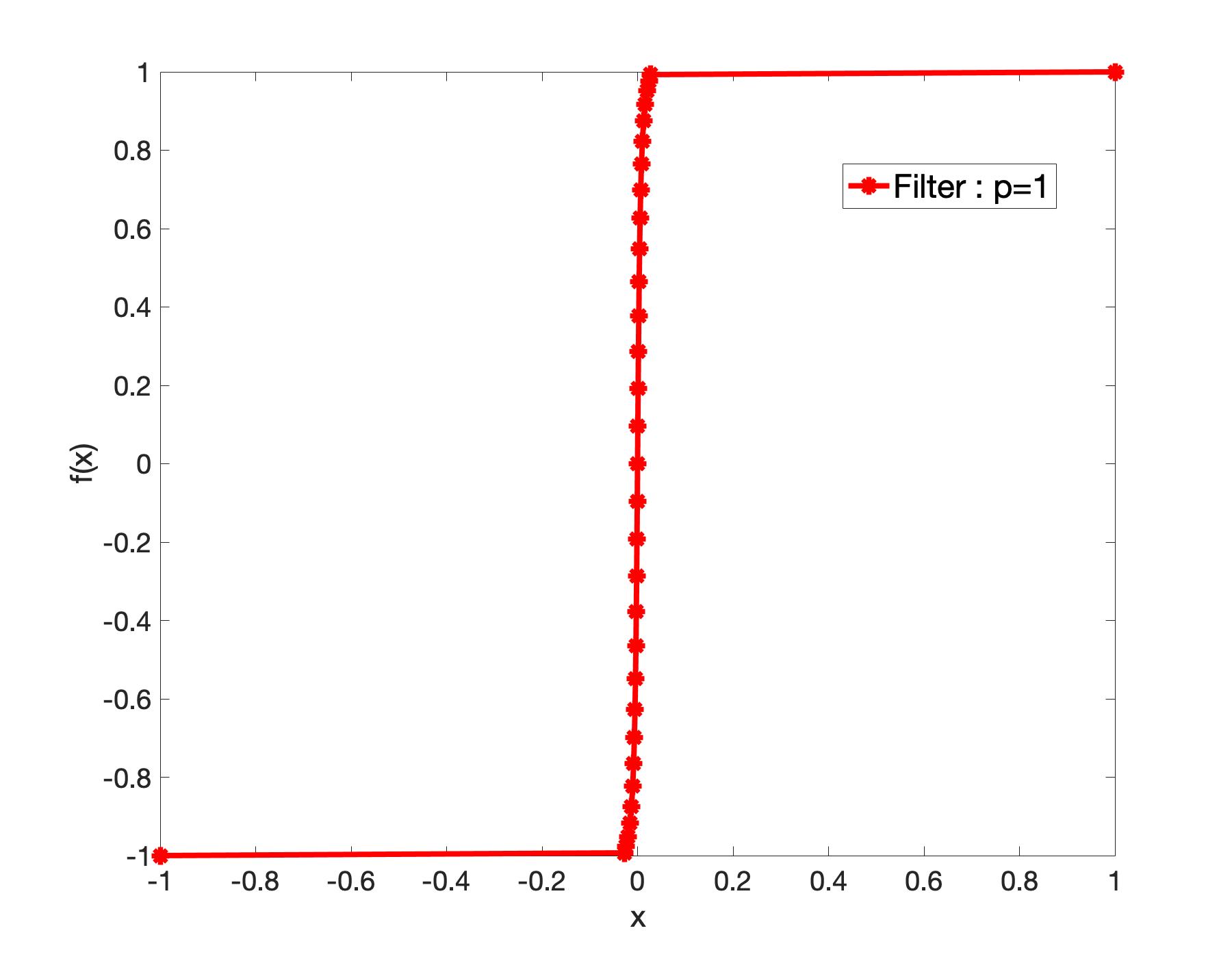}}
\subfigure[$\eps^2=10^{-5}$ and $N=64$ without spectral filter.]{
\includegraphics[width=0.23\textwidth,clip==]{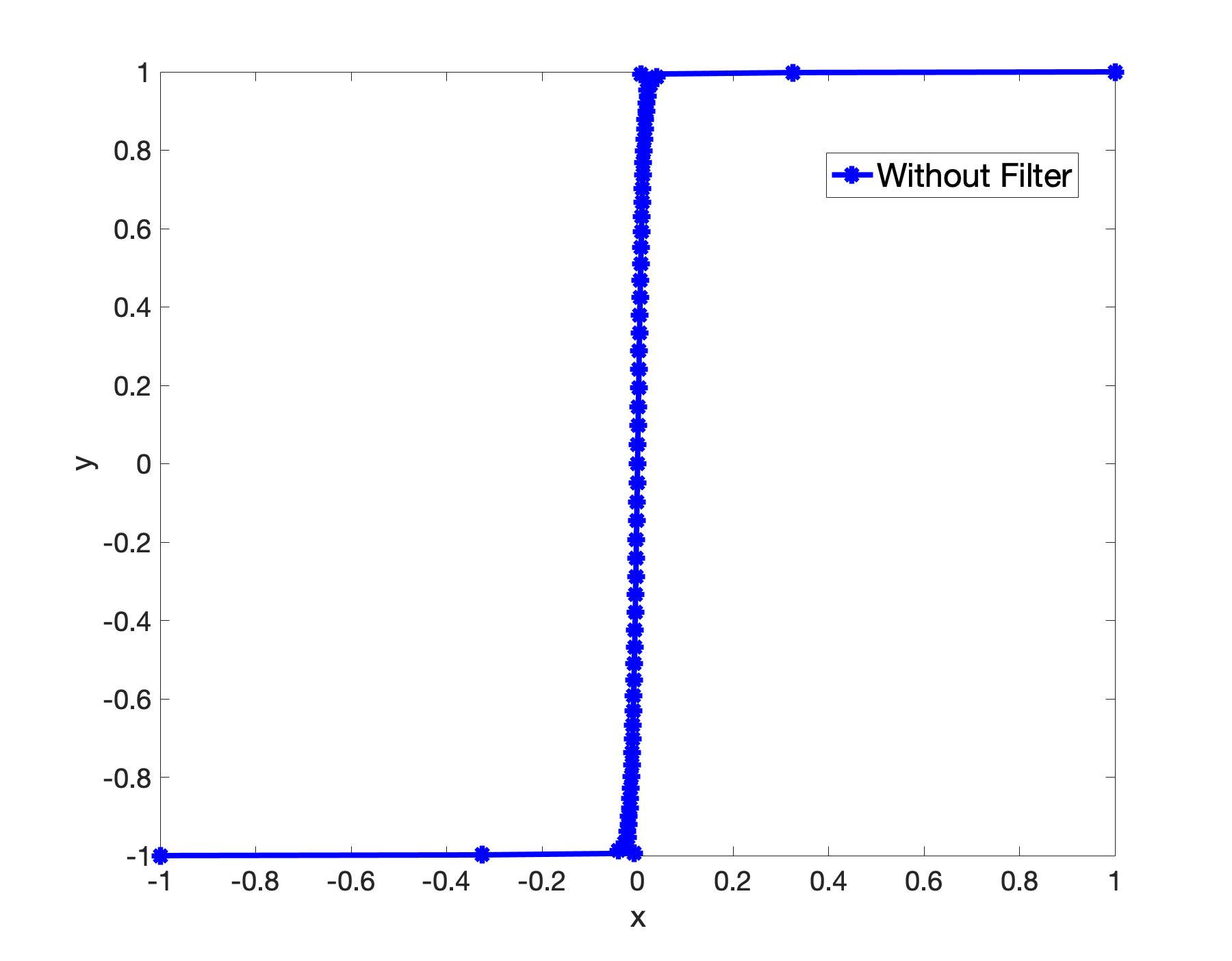}}
\subfigure[$\eps=10^{-5}$ and $N=64$ with spectral filter.]{
\includegraphics[width=0.23\textwidth,clip==]{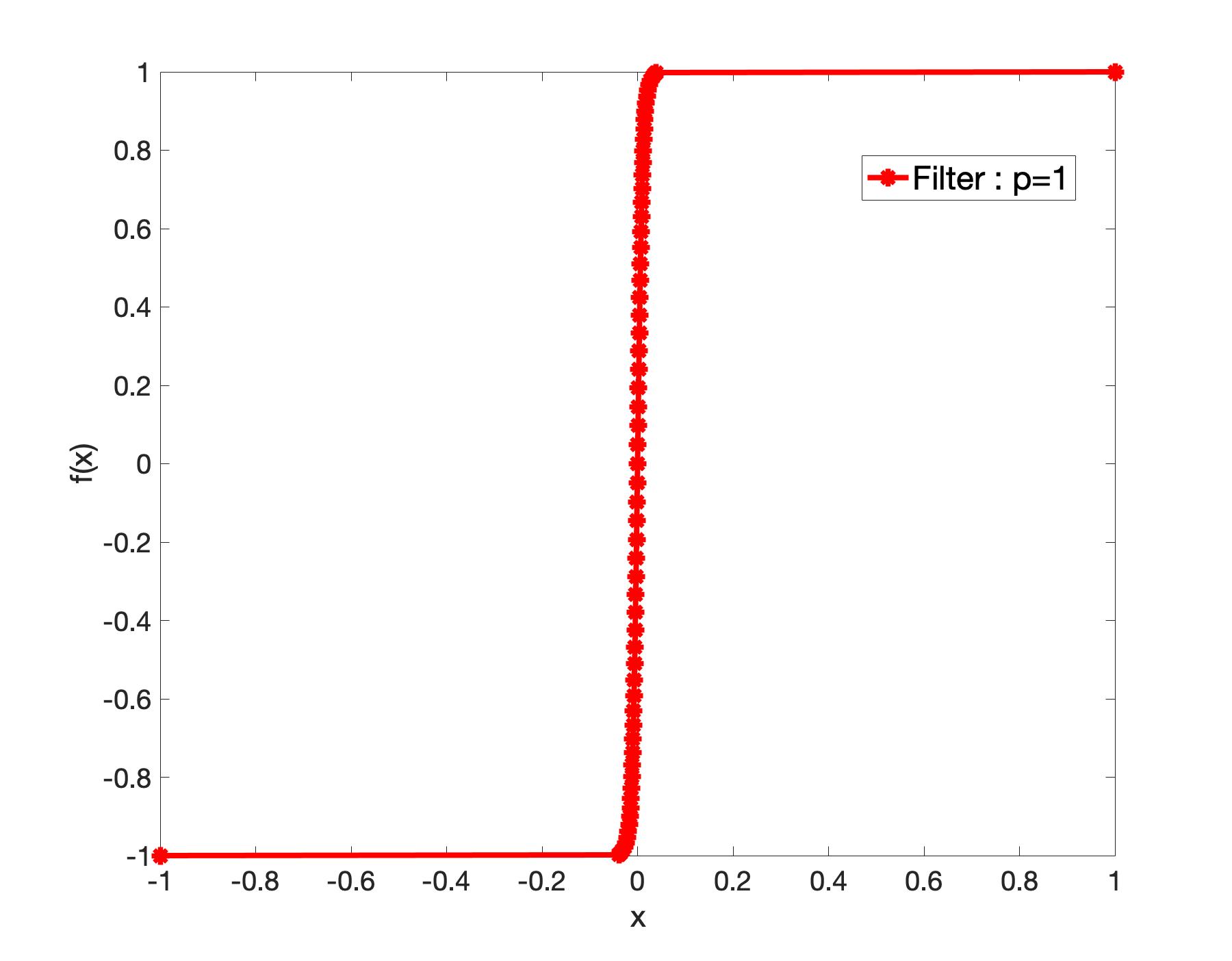}}
\caption{Approximate steady states of Allen-Cahn equation by the Lagrangian scheme  with   Legendre Spectral method in space for $\eps^2=10^{-5}$.}\label{spe2_steady}
\end{figure}

Next, we consider the generalized Allen-Cahn equation  \eqref{move:sharp:1} with an advection velocity $\bv\equiv 1$,  so  the interface will evolve and move to the right. We would like to see how our Lagrangian method performs with moving interfaces.
 In Fig.\,\ref{moving_4} we plot  the  interface profiles at various times computed by the Lagrangian scheme with spectral method and finite element method in space for the generalized Allen-Cahn equation \eqref{move:sharp:1} with     $\bv\equiv 1$. As a comparison, we also plotted results by using a semi-implicit method in Eulerian coordinate.  We observe that as the interface moves, our Lagrangian method can still capture the interface well with few points.

\begin{figure}
\centering
\subfigure[ Flow dynamic approach with Finite element method,  $\eps^2=0.001$, $N=8$.]{
\includegraphics[width=0.45\textwidth,clip==]{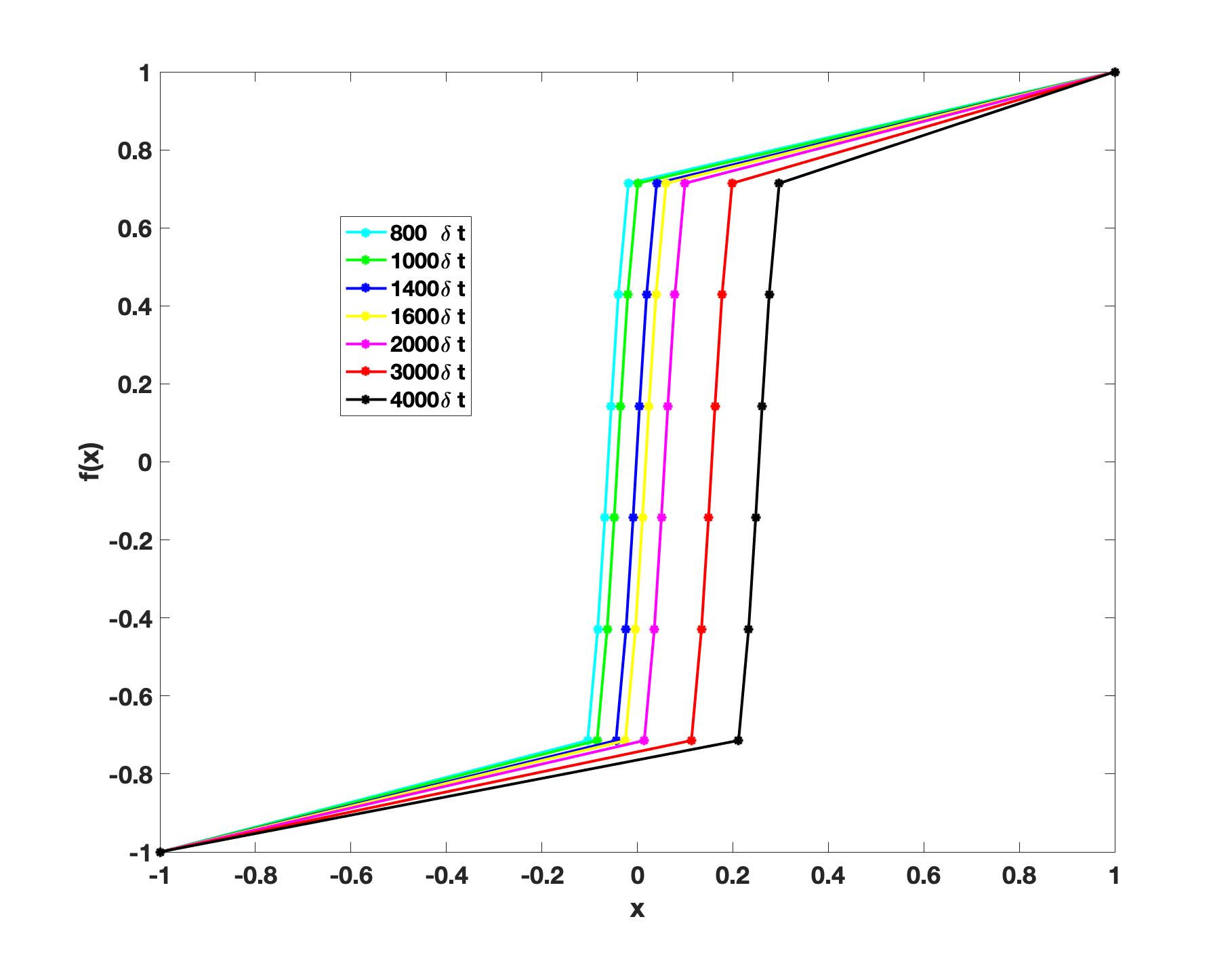}}
\subfigure[ Flow dynamic approach with Finite element method,  $\eps^2=0.001$, $N=16$.]{
\includegraphics[width=0.45\textwidth,clip==]{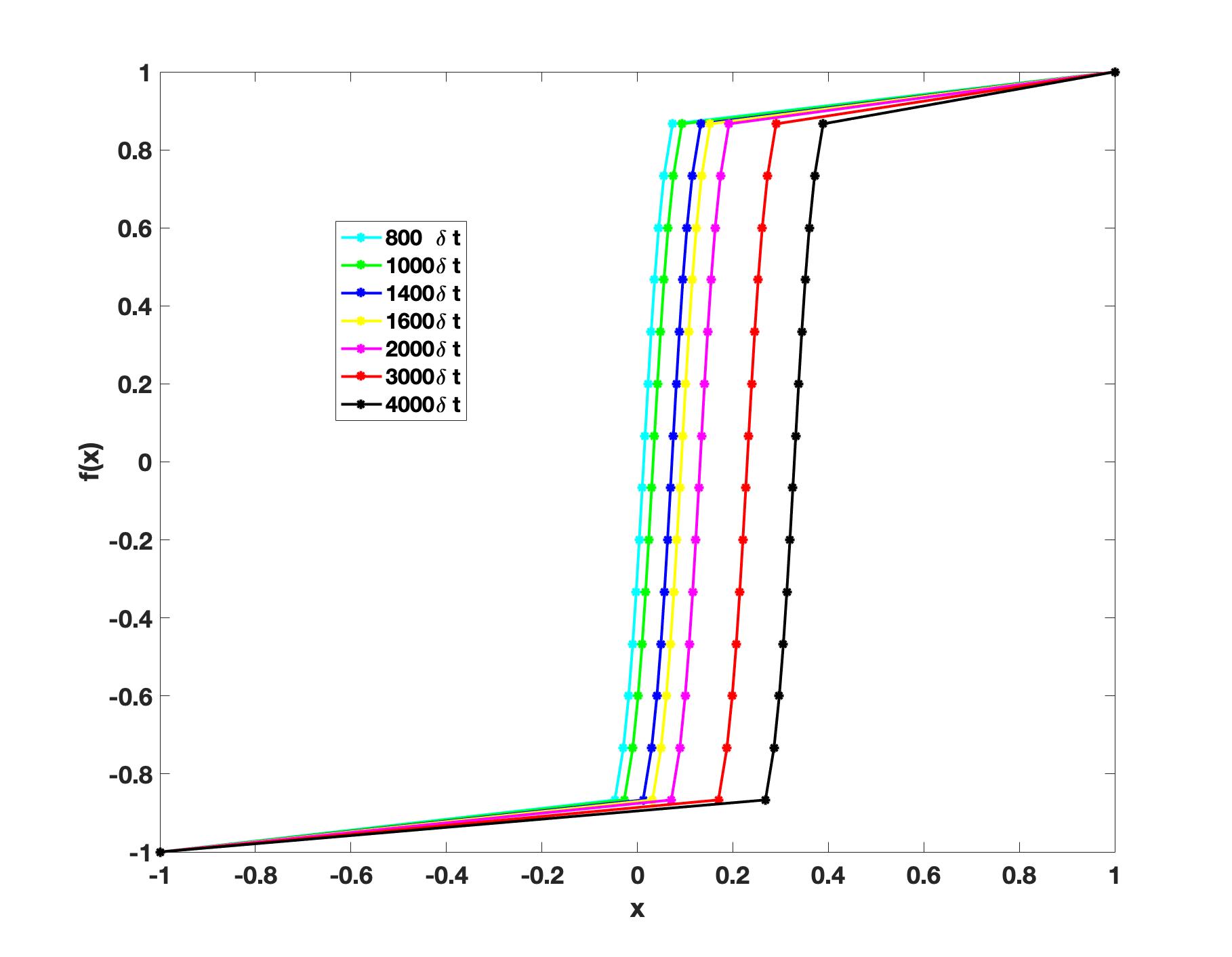}}
\subfigure[ Flow dynamic approach with Spectral method,  $\eps^2=0.001$, $N=64$.]{
\includegraphics[width=0.45\textwidth,clip==]{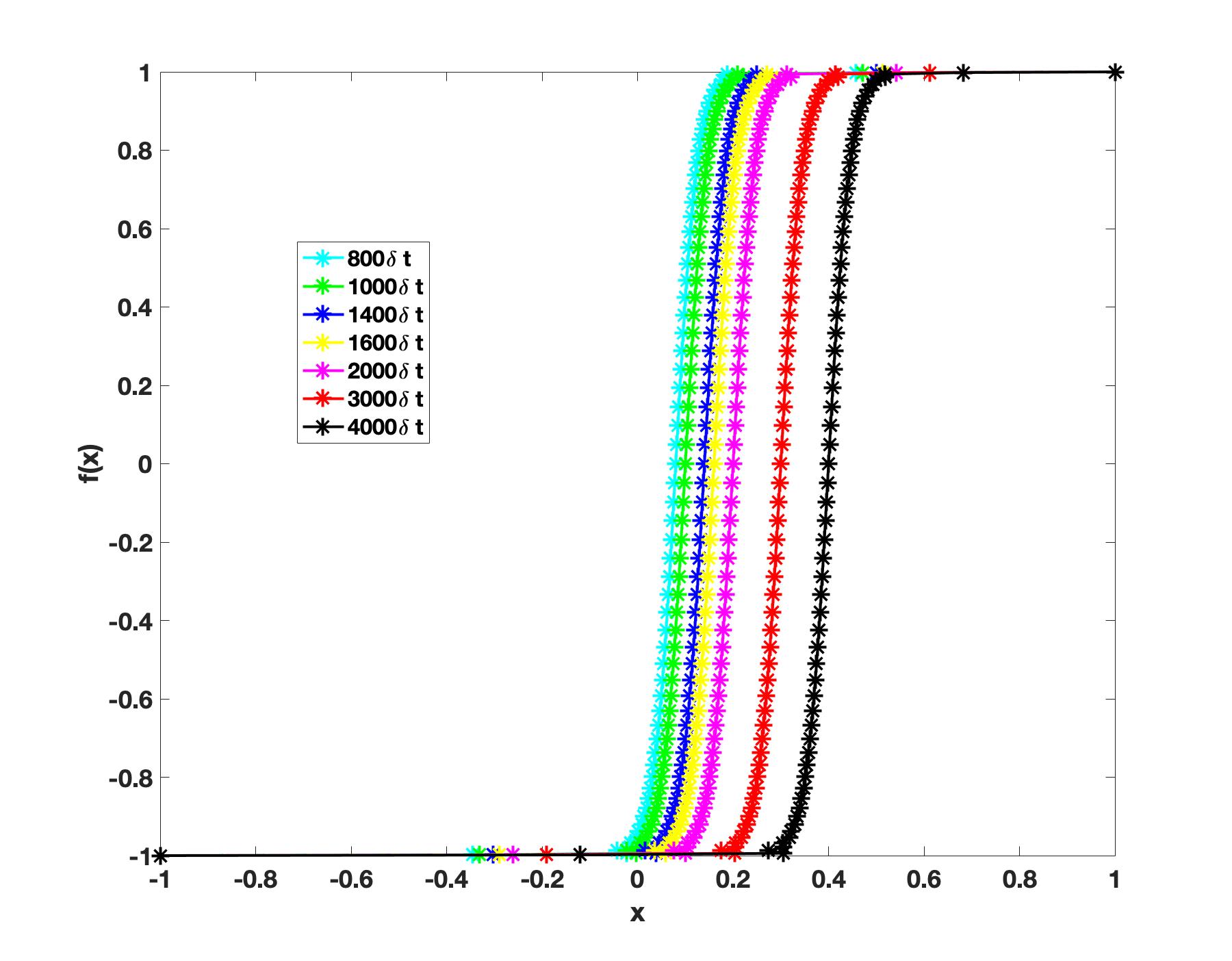}}
\subfigure[ Numerical method in Eulerian coordinate, $\eps^2=0.001$, $N=256$.]{
\includegraphics[width=0.45\textwidth,clip==]{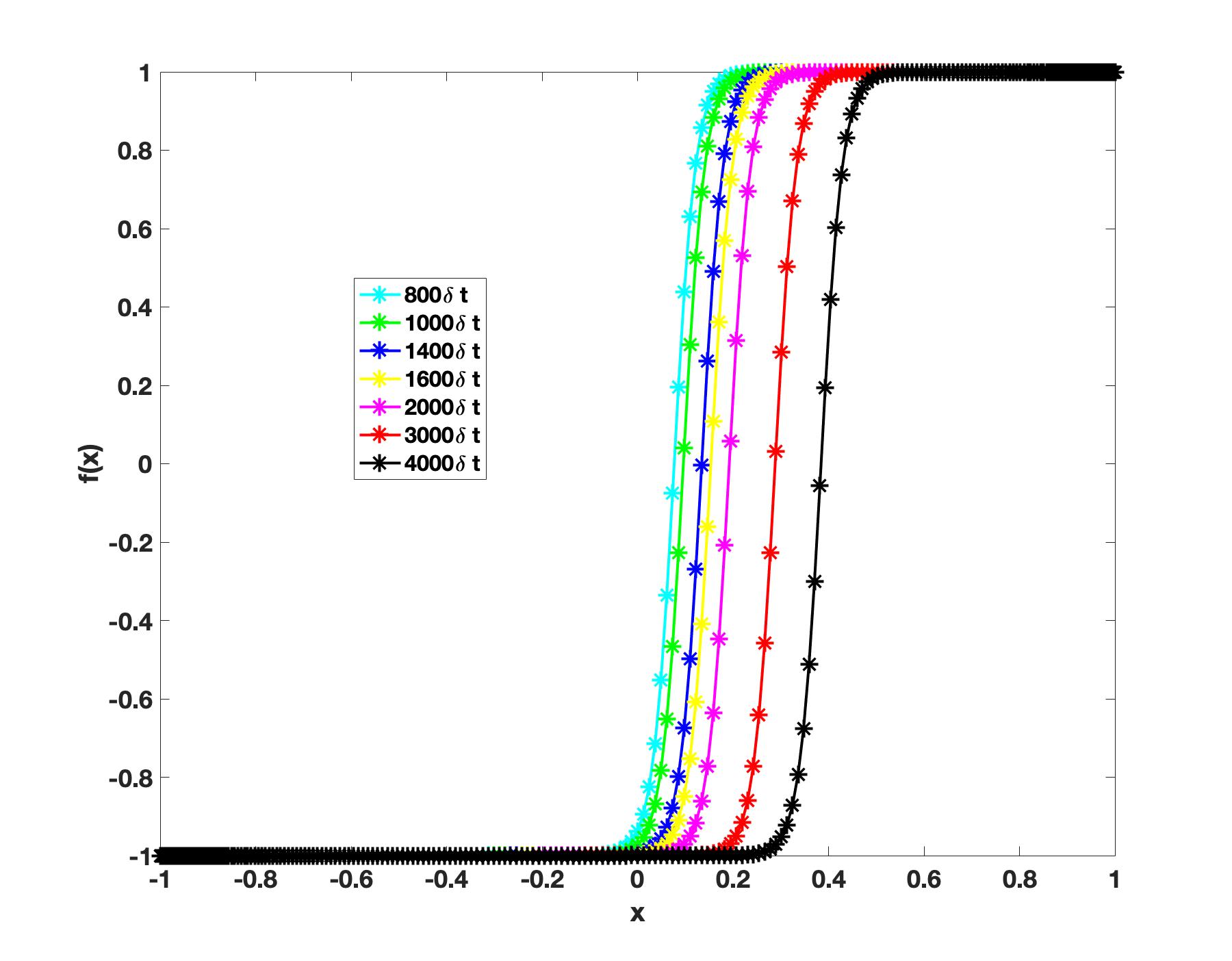}}
\caption{Approximate solutions for the generalized Allen-Cahn equation. }\label{moving_4}
\end{figure}

\subsection{Two dimensional axis-symmetric case}
As a final example, we  examine the performance of our flow dynamic approach for a two dimensional axis-symmetric case with  $\Omega=\{x^2+y^2<1\}$ and initial condition $f_0(x)=x$. 
More precisely, we solve \eqref{AC-axi} with $\eps^2=0.001$ using the Lagrangian scheme with a spectral method in space with $N=16,64$.  
Since \eqref{AC-axi} is axi-symmetric, we only plot the one-dimensional profiles in Fig.\,\ref{axis-symmetric}.

\begin{figure}
\centering
\subfigure[ Flow dynamic approach with spectral method of $N=16$.]{
\includegraphics[width=0.45\textwidth,clip==]{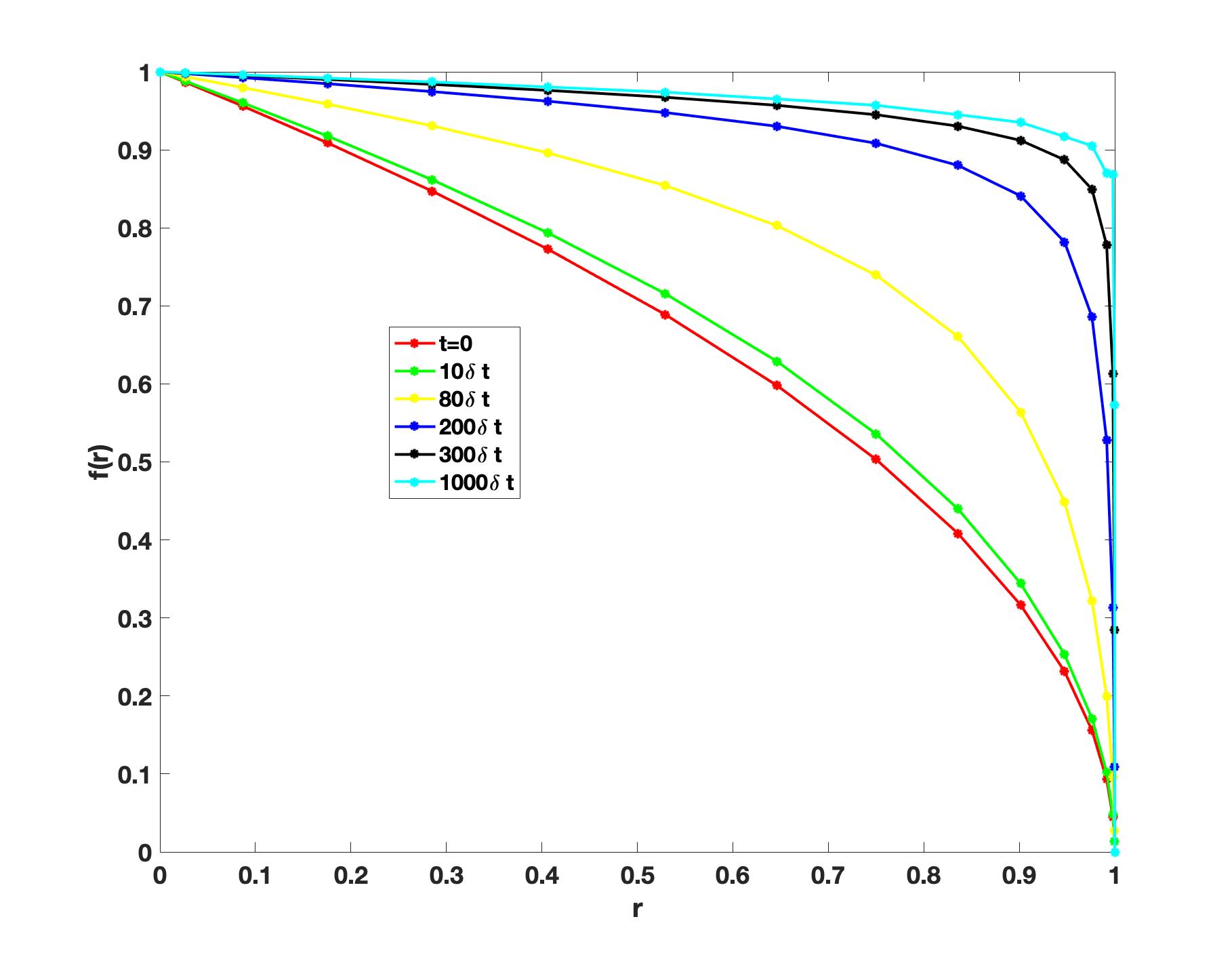}
\includegraphics[width=0.45\textwidth,clip==]{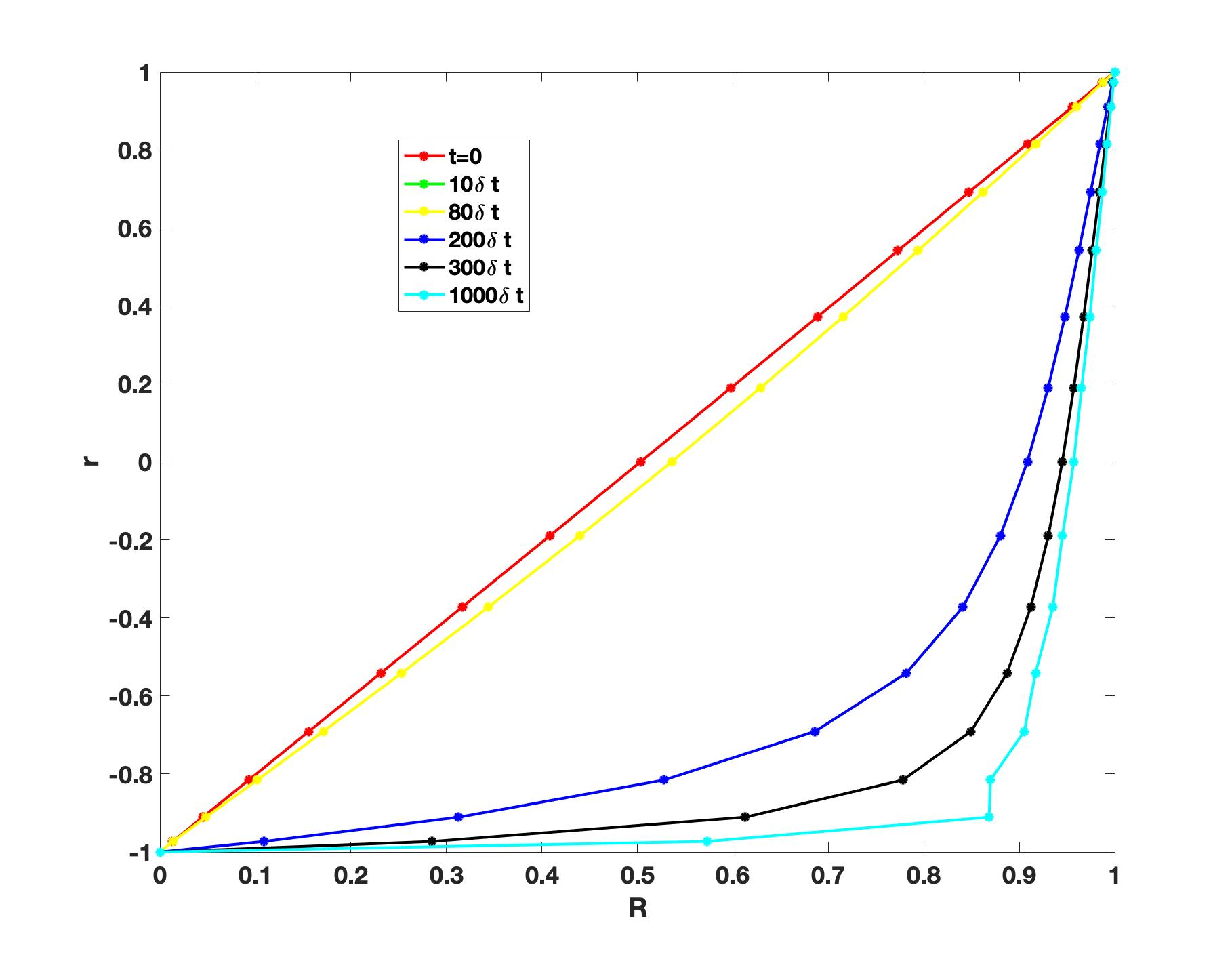}}
\subfigure[ Flow dynamic approach with spectral method of  $N=64$.]{
\includegraphics[width=0.45\textwidth,clip==]{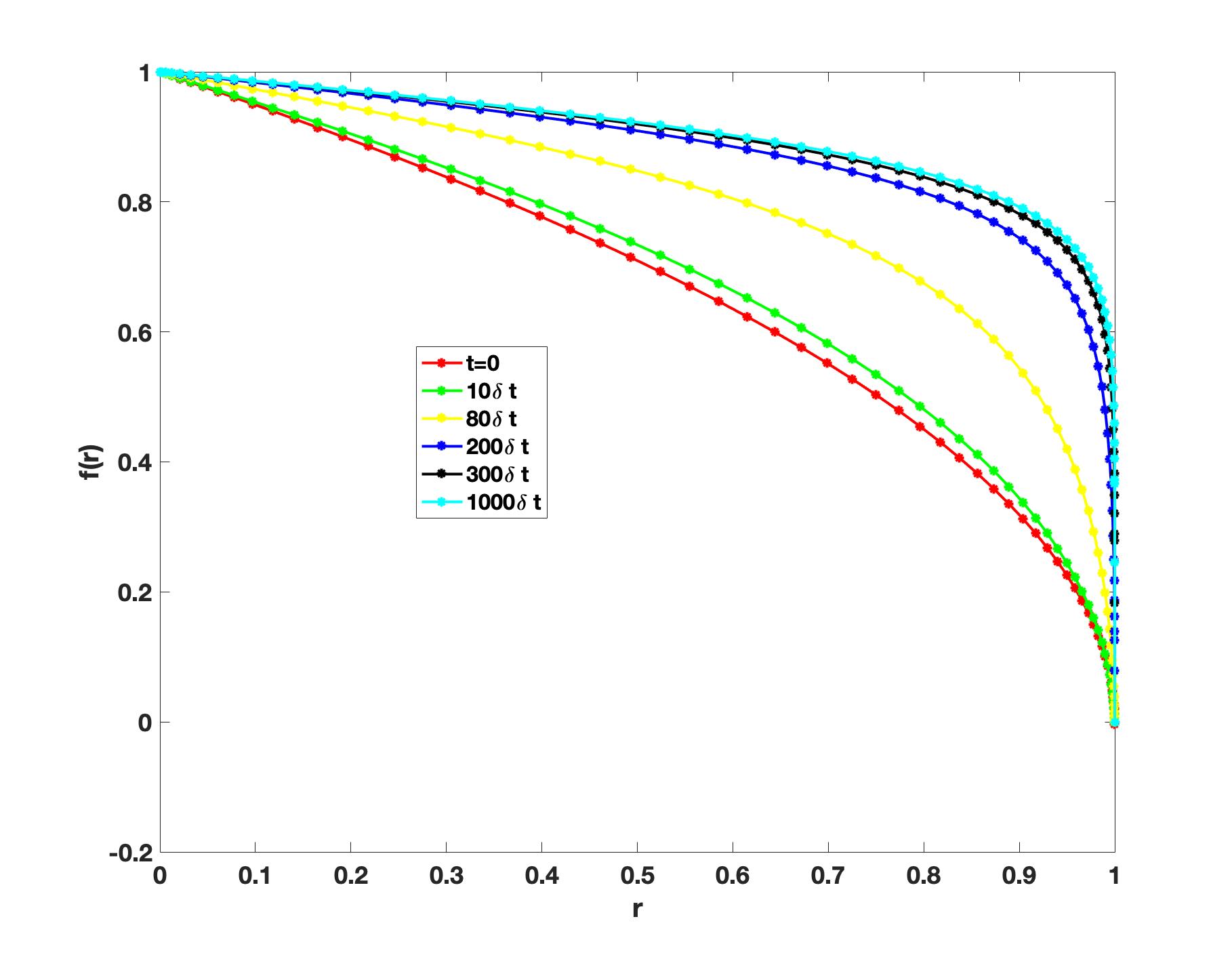}
\includegraphics[width=0.45\textwidth,clip==]{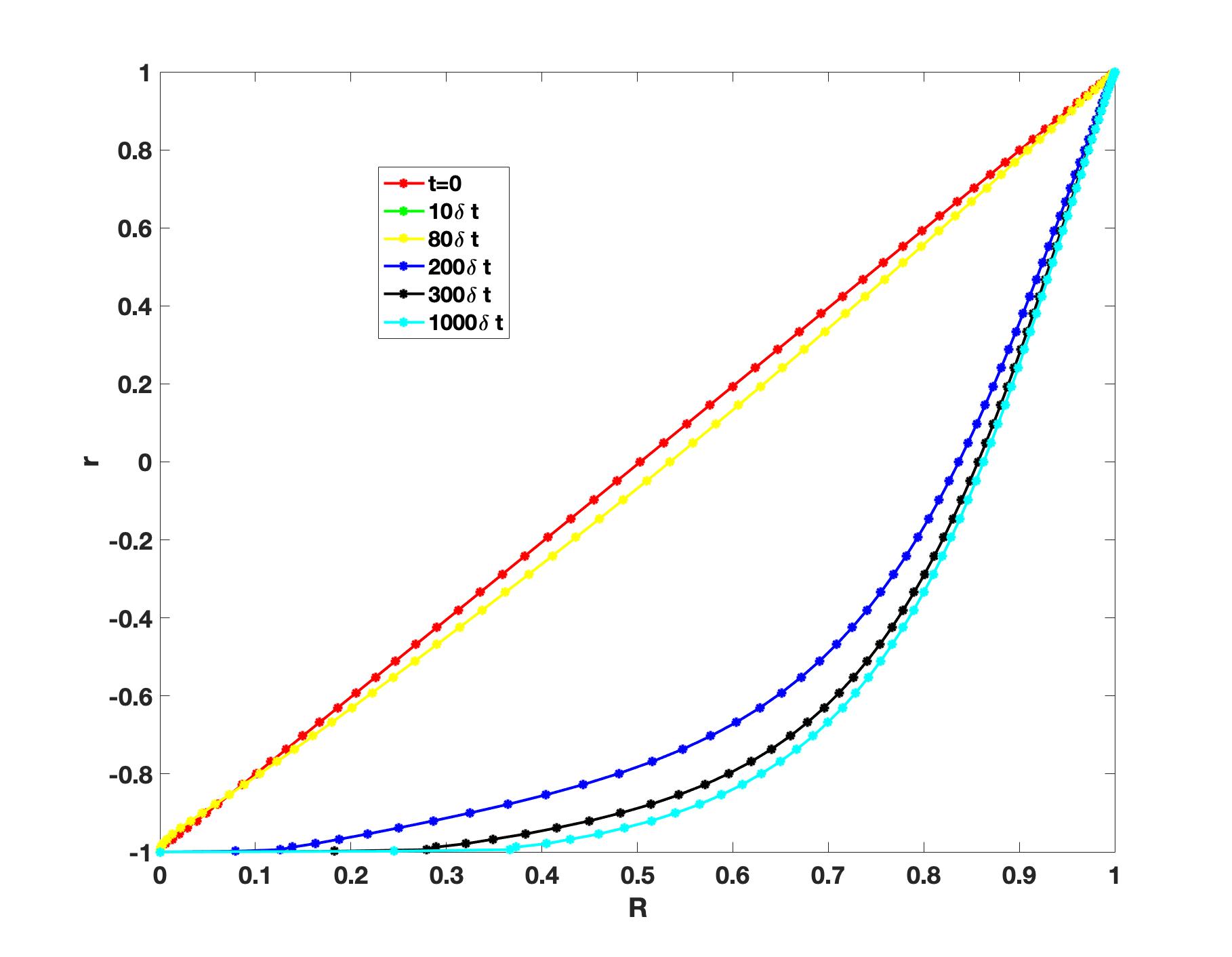}}
\caption{Axis-symmetric case computed by Lagrangian numerical method with $\eps^2=0.001$ and  time step $\delta t=10^{-4}$. }\label{axis-symmetric}
\end{figure}

\section{Concluding remarks}

We presented in this paper a new Lagrangian approach which can effectively capture the thin interface of  the Allen-Cahn type equations. Using the energetic variational approach, we introduced a transport equation and  
 reformulated the Allen-Cahn equation in Eulerian coordinates to a trajectory equation for the flow map in Lagrangian coordinates. We then developed effective energy stable schemes for the highly nonlinear trajectory equation, and presented ample numerical results to show the  effectiveness of this approach for interface capturing. 

The main advantage of the new approach is that meshes, in the Eulerian coordinate through the flow map,   automatically moves to the interfacial regions  so that only a few points are needed to resolve thin interfaces. In fact, the number of points required to resolve interfacial layers of width $\epsilon$ is independent of  $\epsilon$! 

 To fix the idea, we restricted ourselves to the one-dimensional case in this paper. In this case, the  assumption that the flow velocity satisfies the transport equation \eqref{transport} leads to a well-posed trajectory equation. But the transport equation \eqref{transport} is not a suitable choice for multi-dimensional cases as it will lead to a trajectory equation which is not well-posed.  However, the methodology introduced in this paper is still applicable for multi-dimensional cases and for other type of diffuse interface models such as Cahn-Hilliard models. The key is to use an alternative transport equation so that the resulting trajectory equation becomes well-posed.
 In a forthcoming paper, we shall apply the new Lagrangian approach introduced in this paper to multi-dimensional diffuse-interface models. 

\appendix
\section{Derivation by an energetic variational approach}
We shall use the energetic variational approach to derive the Allen-Cahn equation  \eqref{All:High}  using  flow map  \eqref{flow_map_AC} and kinematic relation \eqref{map:1}.

\subsection{\bf Energy dissipative law with flow map}
The energy dissipative law consisting the conservation function as well as the dissipation function plus kinematic relationship determine all the physical information for mathematical models.  So we combine original energy dissipative law \eqref{en:diss} with transport equation \eqref{map:1} together to define the singularity by using the Energetic Variational Approach.
If we plug the  kinematic equations \eqref{map:1} into the energy dissipative law \eqref{en:diss},  we can  derive a equivalent energy dissipative law with respect to flow map of equation \eqref{flow_map_AC} in Eulerian coordinate. For Allen-Cahn system \eqref{G_AC_Var:1}-\eqref{G_AC_Var:2}, we have the new energy dissipative law as

\begin{equation}\label{en:diss:flow1}
\left\{
\begin{aligned}
&\frac{d}{dt}\int_{\Omega_{\bx}}\frac 12|\Grad_{\bx} f|^2+\frac{1}{4\eps^2}(f^2-1)^2d\bx
=-\int_{\Omega_{\bx}}|\bu\cdot \Grad_{\bx} f|^2d\bx,\\
&f_t+(\bu\cdot\Grad_{\bx})f=0.
\end{aligned}
\right.
\end{equation}

Where the total free energy is $E^{total}:=\int_{\Omega_{\bx}}w(f)d\bx$ with free energy density $w:=\frac 12|\Grad_{\bx} f|^2+\frac{1}{4\eps^2}(f^2-1)^2$, and ${\bf \Delta}$ is represented as 
\begin{equation}\label{en:diss:flow3}
\begin{aligned}
{\bf \Delta} =\int_{\Omega_{\bx}}|\bu\cdot \Grad_{\bx} f|^2d\bx,
\end{aligned}
\end{equation}
which is dissipative term with respect to velocity $\bu$ and also can be regarded as entropy production from the Second Law of Thermodynamics.  In order to derive the constitution equation of Allen-Cahn equation in terms of force balance,   we need to introduce the framework  of Least Action principle and Maximum Dissipative principle.

\subsubsection{\bf Least Action Principle}
The  Least Action Principle \cite{abraham1978foundations,arnol2013mathematical} is interpreted as for a Hamiltonian system the trajectories of particles from position
$\bx(\bX,0)$ at time $t=0$ to position $\bx(\bX,T)$ at time $t=T$ are determined by the variational of Least Action function with respect to trajectory flow map. From energy dissipative law \eqref{en:diss},  for Allen-Cahn equation, the least action function is defined as
\begin{equation}
\begin{split}
A(\bx):&=-\int_0^{T}\mathcal{F}dt=-\int_0^{T}\int_{\Omega_{\bx}}w(\Grad_{\bx} f,f)d\bx dt\\&=-\int_0^{T}\int_{\Omega_{\bx}}\frac 12|\Grad_{\bx} f|^2+\frac{1}{4\eps^2}(f^2-1)^2d\bx dt,
\end{split}
\end{equation}
  where $\mathcal{F}$ is Helmholtz free energy and $w(\Grad_{\bx} f,f)=\frac 12|\Grad_{\bx} f|^2+\frac{1}{4\eps^2}(f^2-1)^2$.
Since from the kinematic relationship defined by equation \eqref{map:1}  in  Allen-Cahn system, we have the following equalities in Lagrange coordinate
\begin{equation}\label{traj:eq}
\begin{aligned}
f(\bx(\bX,t),t):=f(\bX,0)=f_0(\bX).
\end{aligned}
\end{equation}
By \eqref{traj:eq},  for Allen-Cahn system \eqref{G_AC_Var:1}-\eqref{G_AC_Var:2}, using deformation tensor $F$,  the action function is formulated as follows  in Lagrangian coordinate
\begin{equation}
A(\bx):=-\int_0^{T}\int_{\Omega_{\bX}}w(\Grad_{\bX} f_0(\bX)(\frac{\partial \bx}{\partial \bX})^{-1},f_0(\bX))det FdX dt.
\end{equation}
  Taking the variational derivative of action function $A(\bx)$ with respect to flow map $\bx \rightarrow \bx+\eps \by$ and combined with chain rule $\Grad_{\bx} f=\Grad_{\bX} f_0(\bX)(\frac{\partial \bx}{\partial \bX})^{-1}$, and notice the equality \eqref{traj:eq}.  Then we obtain

\begin{equation}
\begin{split}\label{LAF:eq}
\frac{\delta A}{\delta \bx}&=\frac{d}{d\eps}|_{\eps=0}\int_{\Omega_{\bx}}w(\Grad_{\bx} f(\bx+\eps \by),f(\bx+\eps \by))d\bx
\\&=\frac{d}{d\eps}|_{\eps=0}\int_{\Omega_{\bX}}w(\Grad_{\bX} f_0(\bX)(\frac{\partial (\bx+\eps \by)}{\partial \bX})^{-1},f_0(\bX))det(\frac{\partial (\bx+\eps 
\by)}{\partial \bX})d\bX
\\&=\int_{\Omega_{\bX}}\frac{\partial w(\Grad_{\bx} f,f)}{\partial \Grad_{\bx} f}(-F^{-1}\frac{\partial \by}{\partial \bX}F^{-1}\Grad_{\bX} f_0(\bX))det F+w(\Grad_{\bx} f,f)detF\cdot tr(F^{-T}\frac{\partial \by}{\partial \bX}) d\bX
\\&=\int_{\Omega_{\bX}}-\frac{\partial w(\Grad_{\bx} f,f)}{\partial \Grad_{\bx} f}\otimes \Grad_{\bx} f\frac{\partial \by}{\partial \bx}+w(\Grad_{\bx} f,f)\Grad_{\bx}\cdot \by d\bx
\\&=\int_{\Omega_{\bX}}\Grad_{\bx}\cdot(\frac{\partial w(\Grad_{\bx} f,f)}{\partial \Grad_{\bx} f}\otimes \Grad_{\bx} f-w(\Grad_{\bx} f,f)I)\by d\bx=\int_{\Omega_{\bx}}w_f\Grad_{\bx} f \by d\bx.
\end{split}
\end{equation}

Where $\frac{\delta w}{\delta f}=w_f=-\Delta_{\bx} f + \frac{1}{\eps^2}f(f^2-1)$,  $\frac{\delta w}{\delta f}$ is also called chemical potential and  $I$ is identity matrix.  According to Least Action Principle we have the conservative force as $F_{con}=\frac{\delta A}{\delta \bx}$ in Eulerian coordinate.

As a consequence,  we derive  that
\begin{equation}\label{con:force}
F_{con}=\frac{\delta A}{\delta \bx}=w_f\Grad_{\bx} f. 
\end{equation}
In order to derive the constitution equation, as we have computed the conservative force \eqref{con:force} from the Least Action principle, the dissipative force shall be obtained from the following Maximum dissipative principle.
\subsubsection{\bf Maximum Dissipative Principle}
The Maximum Dissipative Principle is also named as Onsager principle, .ie.  the dissipative force can be obtain by taking variational
of $\frac 12\bf \Delta$ with respect to velocity $\bu$. Since $\bf \Delta$ is said to be quadratic in the rates, so the force is linear
with respective rates.

\begin{equation}
F_{dis}=\frac{\delta\frac 12\bf \Delta}{\delta \bu}=
\bu\cdot\Grad_{\bx} f\Grad_{\bx} f.
\end{equation}

\subsection{\bf Force balance and constitution equation}
From Newton's force balance law,
\begin{equation}
F_{con}=F_{dis}
\end{equation}
We derive the constitution equation of Allen-Cahn equation in Eulerian coordinate in combination of conservative force and dissipative force, for system \eqref{G_AC_Var:1}-\eqref{G_AC_Var:2} 
\begin{equation}\label{All:force}
\begin{split}
&w_f\Grad_{\bx} f= \bu\cdot\Grad_{\bx} f\Grad_{\bx} f,\\
&w_f=-\Delta_{\bx} f + \frac{1}{\eps^2}f(f^2-1).
\end{split}
\end{equation}

\begin{remark}
For Allen-Cahn system \eqref{All:High}, taking inner product of \eqref{All:High} with $-\bu\cdot\Grad_{\bx} f$ and notice the equality $f_t=-\bu\cdot\Grad_{\bx} f$.  We can also derive the equivalent energy dissipative law \eqref{en:diss:flow1} in Eulerian coordinate. 
\end{remark}

\bibliographystyle{plain}

\bibliography{vesicle_ref}

\end{document}